\def\th@plain{%
  \thm@notefont{}
  \itshape 
}
\def\th@definition{%
  \thm@notefont{}
  \normalfont 
}
\setlist[enumerate]{label=(\roman*),leftmargin=0.8cm}
\newtheorem{proposition}{Proposition}[section]
\newtheorem{lemma}[proposition]{Lemma}
\newtheorem{theorem}[proposition]{Theorem}
\newtheorem{corollary}[proposition]{Corollary}
\theoremstyle{definition}
\newtheorem{remark}[proposition]{Remark}
\newtheorem{definition}[proposition]{Definition}
\newtheorem{examples}[proposition]{Examples}
\numberwithin{equation}{section} \setcounter{tocdepth}{1}
\newcommand{\R}{\mathbb{R}}
\newcommand{\C}{\mathbb{C}}
\newcommand{\Z}{\mathbb{Z}}
\newcommand{\Q}{\mathbb{Q}}
\newcommand{\pr}{\mathbb{P}}
\newcommand{\ddbar}{\partial\bar{\partial}}
\newcommand{\B}{\mathcal{B}}
\newcommand{\scL}{\mathcal{L}}
\newcommand{\scH}{\mathcal{H}}
\newcommand{\scM}{\mathcal{M}}
\newcommand{\scX}{\mathcal{X}}
\newcommand{\X}{\mathcal{X}}
\newcommand{\Y}{\mathcal{Y}}
\newcommand{\scO}{\mathcal{O}}
\newcommand{\scY}{\mathcal{Y}}
\newcommand{\scN}{\mathcal{N}}
\newcommand{\scZ}{\mathcal{Z}}
\newcommand{\co}{\Omega}
\newcommand{\reg}{\text{reg}}
\newcommand{\sing}{\text{sing}}
\DeclareMathOperator{\Bl}{Bl}
\DeclareMathOperator{\Aut}{Aut}
\DeclareMathOperator{\Ric}{Ric}
\DeclareMathOperator{\Rel}{Rel}
\DeclareMathOperator{\DF}{DF}
\DeclareMathOperator{\Amp}{Amp}
\DeclareMathOperator{\AM}{AM}
\title[K-stability for K\"ahler Manifolds]{K-stability for K\"ahler manifolds}
\author[Ruadha\'i Dervan and Julius Ross]{Ruadha\'i Dervan and Julius Ross}
\address{Ruadha\'i Dervan, DPMMS, Centre for Mathematical Sciences, Wilberforce Road, Cambridge CB3 0WB, United Kingdom, and Universit\'e Libre de Bruxelles, Franklin Rooseveltlaan 50, 1050 Brussels, Belgium.}
\email{R.Dervan@dpmms.cam.ac.uk}
\address{Julius Ross, DPMMS, Centre for Mathematical Sciences, Wilberforce Road, Cambridge CB3 0WB, United Kingdom.}
\email{J.Ross@dpmms.cam.ac.uk}
\begin{document}

\begin{abstract} We formulate a notion of K-stability for K\"ahler manifolds, and prove one direction of the Yau-Tian-Donaldson conjecture in this setting.  More precisely, we prove that the Mabuchi functional being bounded below
(resp. coercive) implies K-semistability (resp.\ uniformly K-stable).  In particular this shows that the existence of a constant scalar curvature K\"ahler metric implies K-semistability, and K-stability if one assumes the automorphism group is discrete.  We also show how Stoppa's argument holds in the K\"ahler case, giving a simpler proof of this K-stability statement.
\end{abstract}

\maketitle

\section{Introduction}

The search for canonical metrics in K\"ahler geometry has led to the important notion of K-stability.    Originally defined by Tian \cite{Tian97} in the context of K\"ahler-Einstein metrics on Fano manifolds and by analogy with the Hilbert-Mumford criterion in Geometric Invariant Theory, K-stability involves control of the sign of a numerical invariant associated to certain one-parameter degenerations of the original manifold. This numerical invariant is essentially the same quantity discovered by Futaki \cite{Futaki} which is given as an integral over the central fibre of the degeneration, and has since been given a purely algebro-geometric interpretation by Donaldson \cite{DonaldsonToric}.  For this reason the invariant is commonly referred to as the \emph{Donaldson-Futaki} invariant, and extends the notion of K-stability to projective manifolds that are not necessarily Fano.

Over time it has become clear that it is useful to think of the Donaldson-Futaki invariant as a kind of topological invariant of the total space of a given degeneration. This point of view was exploited first by Wang \cite{WangHeight} and Odaka \cite{OdakaGIT,Odakaslope}, and is key to the relationship between K-stability and birational geometry, in particular the work of Li-Xu \cite{LiXu}, as well as the non-Archimidean viewpoint taken up by Boucksom-Hisamoto-Jonsson \cite{BHJ}.

The point of this paper is to emphasise that the topological definition of the Donaldson-Futaki invariant allows one to define K-stability even more generally, and applies to K\"ahler manifolds that are not necessarily projective.   This extension appears in embryonic form in \cite{BermanTemperature,RossThomas,StoppaSlope,Sz2}. Our main result is a precise definition of the Donaldson-Futaki invariant and K-stability in this context as well as a proof of the so-called ``easy direction" of the Yau-Tian-Donaldson conjecture in this setting.  In the following let $X$ be a compact complex manifold and $[\omega]$ be a K\"ahler class on $X$. 
\begin{theorem}\label{thm1}
Suppose the Mabuchi functional for $[\omega]$ is bounded (resp.\ coercive).  Then $(X,[\omega])$ is K-semistable (resp.\ uniformly K-stable).
\end{theorem}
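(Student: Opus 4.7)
The plan is to follow the classical strategy, due in various incarnations to Tian, Paul--Tian, Phong--Ross--Sturm, and later refined by Berman--Berndtsson and Boucksom--Hisamoto--Jonsson in the projective case: associate to each test configuration a ray of K\"ahler potentials on $X$, compute the asymptotic slope of the Mabuchi functional along this ray, and identify this slope with the Donaldson--Futaki invariant. Boundedness of the Mabuchi functional then forces the slope to be nonnegative, and coercivity forces a strict lower bound in terms of a suitable norm.

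Concretely, given a smooth K\"ahler test configuration $(\scX,\Omega)$ for $(X,[\omega])$ with $\C^*$-action $\rho$, fix a smooth $\rho$-invariant relatively K\"ahler form $\Omega$ on $\scX$ and consider the induced smooth ray of K\"ahler metrics $\omega_t := \rho(e^{-t})^*\Omega|_{\scX_1}$ on $X\cong\scX_1$ for $t\ge 0$. Write $\omega_t = \omega + \ddbar \varphi_t$. The first step is to establish the asymptotic slope formula
\[
\lim_{t\to\infty} \frac{\scM(\varphi_t)}{t} = \DF(\scX,\Omega),
\]
where $\scM$ denotes the Mabuchi functional and $\DF(\scX,\Omega)$ is defined in the paper via intersection numbers on the compactification $\bar{\scX}\to\pr^1$. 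The natural approach is to use the Chen--Tian decomposition $\scM=\scH+\scE$ into entropy and energy parts, and compute each separately via Stokes's theorem on $\bar{\scX}$: the energy piece becomes the intersection number $\Omega^{n+1}/(n+1)$, while the entropy piece, rewritten as a Monge--Amp\`ere energy of the Ricci potential, contributes $K_{\bar{\scX}/\pr^1}\cdot\Omega^n$. This is essentially the content of the slope formula of Phong--Ross--Sturm adapted to the present K\"ahler setting, now with no need for an algebraic polarisation.

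Granting the slope formula, the two assertions follow quickly. If $\scM$ is bounded below then $\scM(\varphi_t)/t$ has nonnegative limit superior along any such ray, so $\DF(\scX,\Omega)\ge 0$, which is K-semistability. For the coercive case one has $\scM(\varphi)\ge \delta J(\varphi)-C$ for some $\delta>0$; the analogous slope formula for the $J$-functional (this time involving only the class $[\omega]$ and $\Omega$) gives $\lim_{t\to\infty} J(\varphi_t)/t = \|(\scX,\Omega)\|_J$, the minimum norm of the test configuration, and combining the two bounds yields $\DF(\scX,\Omega)\ge \delta\|(\scX,\Omega)\|_J$, i.e. uniform K-stability.

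The main technical obstacle is the slope formula itself in the K\"ahler category. In the projective case the proof rests on resolution-of-singularities tricks and equivariant Riemann--Roch to convert asymptotics of $L^k$ sections into intersection numbers; neither is available here. The substitute must be a direct Stokes-theoretic calculation: one needs to show that the various Bott--Chern type representatives of $\scM$, $\scH$ and $\scE$ extend across the central fibre of $\bar{\scX}$ and integrate to the stated intersection numbers. Controlling the boundary contributions at $t=0$ and $t=\infty$, and dealing with the potential singularities of $\Omega$ (for instance when $\Omega$ is only relatively K\"ahler, not K\"ahler on $\bar{\scX}$), will require careful cut-off and regularisation arguments; this is where I expect the bulk of the work to lie.
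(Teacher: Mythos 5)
Your overall strategy is exactly the one the paper follows: run the ray $\theta_\tau$ of potentials induced by the $\C^*$-action, show that the asymptotic slope of the Mabuchi functional is $\DF(\X,\Omega)$ and that of $J$ is the minimum norm, and then read off semistability from boundedness and uniform stability from coercivity. The paper implements the ``Stokes-theoretic calculation'' you anticipate by pushing forward the relevant top-degree forms on $\X$ to currents on $\pr^1$ (a ``Mabuchi current'' built from a relative Ricci form $\Rel(\Omega)=-i\ddbar\log(\Omega^n\wedge\pi^*\omega_{FS})-2\pi^*\omega_{FS}$, and a ``Deligne current'' built from $\Ric\Omega$), identifying each as $i\ddbar$ of the corresponding functional away from $t=0$, and then integrating over $\C$ to get the limit derivative. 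So far your proposal and the paper agree.

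The genuine gap is at the central fibre, and it is not merely a regularisation issue. First, the slope formula $\lim_\tau \scM(\theta_\tau)/\tau=\DF(\X,\Omega)$ is \emph{false} for a general test configuration: the discrepancy between the Mabuchi and Deligne currents is $-i\ddbar\psi(t)$ with $\psi(t)=\int_{\X_t}\log\bigl(\pi^*\omega_{FS}\wedge\Omega^n/\Omega^{n+1}\bigr)\Omega_t^n$, and this function is bounded (hence contributes nothing to the slope) only when $\X_0$ is \emph{reduced}; otherwise the entropy term does not converge to $-K_{\X/\pr^1}\cdot\Omega^n$ and the identity you assert fails. Your proposal never restricts to reduced central fibres nor explains why that restriction is harmless, so as written the key step would fail. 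The paper handles this by a separate reduction (Proposition \ref{prop:simplifiedtestconfig}): a base change $z\mapsto z^d$ followed by normalisation produces a test configuration with reduced central fibre satisfying $\DF(\X',\Omega')\le d\,\DF(\X,\Omega)$ and $\|(\X',\Omega')\|_m=d\,\|(\X,\Omega)\|_m$, with the inequality going in the right direction for both semistability and uniform stability. Second, the case of singular total space is likewise dealt with by a reduction rather than inside the Stokes argument: one resolves singularities, perturbs $p^*\Omega$ by $\delta\zeta$ with $\zeta$ K\"ahler in $p^*[\Omega]-c[E]$, and checks that $\DF$ and $\|\cdot\|_m$ change only by $O(\delta)$. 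Both reductions need to appear somewhere in your argument; without them the slope formula is only proved for a class of test configurations that does not suffice to conclude K-semistability for all of them.
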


A deep analytic result of Berman-Berndtsson \cite{BB} (resp.\ Darvas-Rubinstein \cite{Dar-Rub} and Berman-Darvas-Lu \cite{BermanDarvasLu}) says that the Mabuchi functional is bounded  (resp.\ coercive when $\Aut(X,[\omega])$ is discrete) if one assumes that $[\omega]$ admits a constant scalar curvature K\"ahler (cscK) metric.   So combining this with the above we get one direction of the Yau-Tian-Donaldson conjecture in this setting (when this project started the result of \cite{BermanDarvasLu} was not available, so we were only able to conclude K-semistability).

In the projective case a clever argument of Stoppa \cite{Stoppablowup} proves the existence of a constant scalar curvature K\"ahler metric implies K-stability (also under the assumption there are no infinitesimal automorphisms) using a blowup technique and the glueing theorem of Arezzo-Pacard \cite{ArezzoPacard}.      Our second result shows that this technique also works in the K\"ahler case (and moreover gives a slightly different proof of Stoppa's theorem in the projective case).

\begin{theorem}\label{thm2}
 Assume the automorphism group of $(X,[\omega])$ is discrete.  If there exists a constant scalar curvature K\"ahler metric in $[\omega]$ then the pair $(X,[\omega])$ is K-stable.
\end{theorem}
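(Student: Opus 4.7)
\emph{Strategy.} The plan is to argue by contradiction, following Stoppa's blowup approach. Suppose $(X,[\omega])$ admits a cscK metric with $\Aut(X,[\omega])$ discrete, but $(X,[\omega])$ is not K-stable. Then there exists a non-product Kähler test configuration $(\X,\Omega)$ for $(X,[\omega])$ with $\DF(\X,\Omega)\leq 0$. From $(\X,\Omega)$ together with a generic point $p\in X$ we shall build a destabilising Kähler test configuration for the blowup, contradicting K-semistability of the blowup; the latter will be obtained from the Arezzo--Pacard gluing theorem combined with Theorem~\ref{thm1}.

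\emph{K-semistability of the blowup.} Since $\Aut(X,[\omega])$ is discrete and a cscK metric exists in $[\omega]$, the Arezzo--Pacard theorem (which is proved directly in the Kähler setting) produces a Zariski-open $U\subset X$ such that for every $p\in U$ and every sufficiently small $\epsilon>0$ the blowup $\hat X = \Bl_p X$ carries a cscK metric in the class $\rho^*[\omega]-\epsilon^2[E]$, where $\rho\colon\hat X\to X$ is the blowup and $E$ its exceptional divisor. A general $p$ has trivial stabiliser, so $\Aut(\hat X,\rho^*[\omega]-\epsilon^2[E])$ is again discrete. By Berman--Darvas--Lu the Mabuchi functional on $\hat X$ is then coercive, and Theorem~\ref{thm1} delivers uniform K-stability of $(\hat X,\rho^*[\omega]-\epsilon^2[E])$, in particular K-semistability.

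\emph{Destabilising test configuration on the blowup.} Let $\mathcal{Z}\subset\X$ be the closure of the $\C^*$-orbit through $p$; it is a $\C^*$-invariant curve mapping isomorphically onto $\C$ over $\C^*$ and passing through $p$ in the general fibre. Let $\tilde\rho\colon\hat\X := \Bl_{\mathcal{Z}}\X\to\X$ be the blowup, with exceptional divisor $\mathcal{E}$, and set $\hat\Omega_\epsilon := \tilde\rho^*\Omega-\epsilon^2[\mathcal{E}]$. For all small $\epsilon>0$ the class $\hat\Omega_\epsilon$ is Kähler and $\C^*$-invariant, making $(\hat\X,\hat\Omega_\epsilon)\to\C$ a Kähler test configuration for $(\hat X,\rho^*[\omega]-\epsilon^2[E])$. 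Expanding the intersection-theoretic formula for $\DF$ from earlier in the paper in powers of $\epsilon$ and applying the projection formula together with $c_1(\hat\X/\C) = \tilde\rho^*c_1(\X/\C) - (n-1)[\mathcal{E}]$, the leading term reproduces a positive multiple of $\DF(\X,\Omega)\leq 0$, while the first nonzero correction is \emph{strictly negative}, with coefficient given by a local computation along $\mathcal{Z}$. Hence $\DF(\hat\X,\hat\Omega_\epsilon)<0$ for all sufficiently small $\epsilon$, contradicting the K-semistability of $(\hat X,\rho^*[\omega]-\epsilon^2[E])$ established above.

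\emph{Main obstacle.} The principal difficulty is carrying out the $\DF$ expansion under blowup in the Kähler, rather than algebraic, setting: one must verify that the cohomological definition of $\DF$ adopted in this paper behaves functorially under $\Bl_{\mathcal{Z}}$ exactly as in Stoppa's algebraic computation, and that the sign of the first nonzero correction survives the passage from $\Q$-line bundles to $\R$-Kähler classes. Two auxiliary points also require care: confirming that $(\hat\X,\hat\Omega_\epsilon)$ satisfies the precise definition of Kähler test configuration adopted in the paper (in particular, that its central fibre is a meaningful degeneration of $\hat X$, arising as the blowup of $\X_0$ at the limit point $\lim_{t\to 0}t\cdot p$), and ensuring via genericity of $p$ and discreteness of $\Aut(X,[\omega])$ that $\mathcal{Z}$ is smooth, flat over $\C$, and a legal point of application for Arezzo--Pacard.
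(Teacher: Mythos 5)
Your overall architecture matches the paper's: use K-semistability (Theorem \ref{thm1}) to reduce to a test configuration with $\DF=0$, blow up the closure of a $\C^*$-orbit to get a test configuration for $\Bl_pX$, expand $\DF$ in $\epsilon$ via the projection formula and $c_1(\B)=\nu^*c_1(\X)-(n-1)[E]$, and contradict K-semistability of the blowup coming from Arezzo--Pacard. The expansion itself, $\DF(\B,\Omega-\epsilon\xi_\epsilon)=\DF(\X,\Omega)-n(n-1)\epsilon^{n-1}Ch_p(\X,\Omega)+O(\epsilon^n)$, is indeed routine intersection theory and goes through in the K\"ahler setting essentially as you describe.

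However, there is a genuine gap at the heart of your argument: you assert that for a \emph{generic} point $p$ the first nonzero correction is ``strictly negative, with coefficient given by a local computation along $\mathcal Z$.'' This is false as stated and is precisely where all the work lies. The coefficient is $-n(n-1)\epsilon^{n-1}Ch_p(\X,\Omega)$ with $Ch_p(\X,\Omega)=\frac{[\Omega]^{n+1}}{(n+1)[\omega]^n}-\int_C\Omega$, a global quantity depending on how the orbit closure $C$ of $p$ sits inside $(\X,\Omega)$; for an arbitrary (or generic) $p$ there is no reason for it to be positive. One must \emph{prove the existence} of a single point $p$ with $Ch_p(\X,\Omega)>0$, and this is the analogue of Stoppa's GIT/Chow-stability input. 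In the paper this occupies all of Section \ref{existence-of-point-section}: after normalising $[\Omega]^{n+1}=0$ one identifies $\int_C\Omega$ with $\lim_{\tau\to\infty}\dot\theta_\tau(p)$, introduces the $L^1$-norm of the test configuration, bounds the minimum norm by the $L^1$-norm using the Darvas--Rubinstein comparison between the Finsler distance $d_1$ and the $J$-functional (Proposition \ref{J-and-L1}), and then rules out $\lim\dot\theta_\tau(p)\ge 0$ for all $p$ via the associated geodesic, convexity, and dominated convergence. Without some version of this argument your proof does not close. Relatedly, your contradiction hypothesis should be a test configuration with $\|(\X,\Omega)\|_m>0$ and $\DF=0$, not merely a ``non-product'' one: by the Li--Xu pathologies (Remark \ref{prop-point} vicinity, Remark 5.17 in the paper) the destabilising point need not exist for non-product test configurations of zero minimum norm, so the positivity of the minimum norm is an essential input to the existence of $p$. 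Finally, your ``main obstacle'' paragraph identifies the easy step (functoriality of $\DF$ under blowup) as the difficulty while omitting both the destabilising-point problem and the reduction needed when $\X$ or $C$ is singular (handled in the paper by passing to an equivariant resolution before blowing up).
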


\newcommand{\stab}{\operatorname{stab}}

\subsection*{Discussion} The depth of the Yau-Tian-Donaldson conjecture in the projective case lies in its linking analysis (through the K\"ahler-Einstein or cscK equation) and finite dimensional algebraic geometry (through stability).    By the projectivity assumption it is unclear what, if anything, replaces this finite dimensional picture,  but it is still interesting to ask if there are any features of the projective case that survive.  For instance, one can ask how K-stability behaves in families; that is if $Y\to S$ is a flat family of manifolds what can be said about the set $S_{\stab} = \{s\in S : Y_s \text{ is K-stable}\}$?   For this to make sense in the projective case one  needs to assume also the data of a relatively ample $\mathcal L\to Y$ making each $Y_s$ polarised.  When this polarisation is the anti-canonical bundle (so each $Y_s$ is Fano), a corollary of the Chen-Donaldson-Sun  \cite{CDSI,CDSII,CDSIII} techniques gives that $S_{\stab}$ is Zariski-open \cite{DonaldsonOpen,Odakamoduli}.   In the non-Fano case, it may be that Zariski-openness is too much to ask for, but it seems likely that $S_{\stab}$ is at least the complement of a countable number of algebraic sets in $S$.   In the non-projective case $\mathcal L$ must be replaced by a suitable $(1,1)$-class, and we ask if $S_{\stab}$ has an analogous property.  More precisely, is $S_{\stab}$ the complement of a countable number of analytic subsets of $S$?  We remark that there exist analogous statement for stability for vector bundles (or sheaves) on K\"ahler manifolds (see \cite{teleman} or \cite[Prop 2.9,Theorem 11.6]{GRTI}).  This, along with results of Hong \cite{Hong}, suggest the above is true for particular families of ruled manifolds, but other than this the question seems very much open.

\subsection*{Outline} We start in Section \ref{sec:Kstabilitydefinition} by defining the Donaldson-Futaki invariant and K-stability in a way that immediately extends to the non-projective case.   After recalling some of the basic functionals that we shall need in Section \ref{sec:mabuchi-prelims}, we prove Theorem \ref{thm1} in Section \ref{sec:semistability}.   Then in Section \ref{sec:Stoppa} we give a modification of the blowup argument of Stoppa \cite{Stoppablowup} giving Theorem \ref{thm2}.  Finally in Section \ref{sec:related} we give extensions that apply to twisted constant scalar curvature K\"ahler metrics and to the J-flow.

\subsection*{Acknowledgements}  As this work was in progress we learned of similar results of Sj\"ostr\"om Dyrefelt \cite{Dyrefelt} who independently proves Theorem \ref{thm1} with a slightly different method (see Remark \ref{rmk:Dyrefelt}).  We are very grateful to both Sebastien Boucksom and Zakarias Sj\"ostr\"om Dyrefelt for  helpful conversations on this and related topics.  We would also like to thank Henri Guenancia, Yuji Odaka and Jacopo Stoppa,as well as the referees for their valuable comments.   Part of this work was done while the first author visited the Simons Center for Geometry and Physics, the first author thanks the SCGP for the stimulating environment. JR is supported by an EPSRC Career Accelleration Grant (EP/J002062/1) which also provides RD's studentship.  RD has received additional support from a Fondation Wiener-Anspach scholarship.

\subsection*{Notations}
A polarised variety is a pair $(X,L)$ where $X$ is a variety and $L$ an ample line bundle on $X$.  To simplify notation if $L_1,\ldots,L_n$ are line bundles on $X$ where $n=\dim X$ we write $L_1\ldots L_n$ to mean the intersection $c_1(L_1)\cdots c_1(L_n)$.    We will exclusively deal with normal varieties that admit a canonical (Weil) divisor $K_X$, and we let $L_1\ldots L_{n-1}. K_X$ denote the intersection $c_1(L_1)\cdots c_1(L_{n-1}).[K_X]$. 

When $X$ is smooth, the Dolbeaut cohomology class of a $(1,1)$ form (or current) $\omega$ will be denoted by $[\omega]$.  If $[\omega_1],\ldots,[\omega_n]$ are $(1,1)$-classes we let $[\omega_1]\cdots[\omega_n] = \int_X \omega_1\wedge \cdots \wedge \omega_n$. 

Given a family $\pi\colon \X\to B$ for $t\in B$ we let $\X_t:=\pi^{-1}(t)$ and if $\scL$ is a line bundle on $\X$ let $\scL_t: = \scL|_{\X_t}$.   Similarly if $\Omega$ is a form on $\X$ we let $\Omega_t:= \Omega|_{\X_t}$.  We denote the projections from a product $X_1\times X_2$ to the two factors by either $p_i$ for $i=1,2$ (or $p_{X_i}$ for $i=1,2$).  Finally if $p:X\to Y$ is a morphism and $\omega$ is a form on $Y$ we will sometimes write $\omega$ for $p^*\omega$ when no confusion is possible.

\section{Definition of K-stability for K\"ahler manifolds}\label{sec:Kstabilitydefinition}

In this section we make a precise definition of K-stability for K\"ahler manifolds.  To put this in context we start with a presentation of the definition in the projective case.

\subsection{K-stability in the projective case}

Let $(X,L)$ be a normal polarised variety of dimension $n$.

\begin{definition}[Test-configuration, projective case]\label{def:testconfig} A \emph{test-configuration} for $(X,L)$ is a normal polarised variety $(\X,\scL)$ together with 
\begin{enumerate}
\item a $\C^*$-action on $\X$ lifting to $\scL$,
\item a flat $\C^*$-equivariant map  $\pi\colon \X\to \mathbb P^1$ where $\mathbb P^1$ is given the standard $\C^*$-action,
\end{enumerate}
such that $(\pi^{-1}(\mathbb P^1\setminus\{0\}),\mathcal L)$ is $\mathbb C^*$-equivariantly isomorphic to the product $(X\times \mathbb C^*, p_X^*L^{\otimes r})$ (where the latter is given the trivial action on the $X$ factor).  The number $r$ is called the \emph{exponent} of the test-configuration.\end{definition}

\begin{definition}[Slope of a polarised variety, projective case]\label{def:slope}
We define the \emph{slope} of $(X,L)$ to be $$\mu(X,L) := \frac{-K_X.L^{n-1}}{L^n} = \frac{-\int_X c_1(K_X). c_1(L)^{n-1}}{\int_X c_1(L)^n}.$$
\end{definition}

\begin{definition}[Donaldson-Futaki invariant, projective case]\label{def:DFprojective} The \emph{Donal\-dson-Futaki invariant} of a test-configuration $(\X,\scL)$ for $(X,L)$ of exponent $r$ is 
 $$\DF(\X,\scL) := \frac{n}{n+1}\mu(X,L^{\otimes r}) \scL^{n+1} + \scL^n.K_{\X / \pr^1}.$$

 \end{definition}

\begin{remark}
\begin{enumerate}
\item As we are assuming $\X$ is normal, the assumption that $\pi$ is flat in the definition of a test-configuration is automatically satisfied \cite[III 9.7]{Hartshorne}.
\item Again because of normality,  $\mathcal X$ has a canonical divisor $K_{\mathcal X}$ which is a Weil-divisor, and thus also a relative canonical divisor $K_{\mathcal X/\mathbb P^1} = K_{\mathcal X} - \pi^* K_{\mathbb P^1}$ making the intersection $\scL^n.K_{\mathcal X/\mathbb P^1}$ well-defined.
\item We can extend the notion of test-configuration, Donaldson-Futaki invariant and K-stability to the case that $\scL$ and $L$ are $\mathbb R$-line bundles in the obvious way. In particular, we can and do assume that the exponent of the test-configuration is one.  
\end{enumerate}
\end{remark}

Another important concept is the  \emph{minimum norm} of a test-configuration \cite{BHJ,Dervan}. To define this observe that every test-configuration admits a birational map $$f:(X\times\pr^1,p_1^*L)\dashrightarrow (\X,\scL),$$ so one can take a resolution of indeterminacy as follows.

\[
\begin{tikzcd}
\Y \arrow[swap]{d}{q} \arrow{dr}{g} &  \\
X\times\pr^1 \arrow[dotted]{r}{f} & \X
\end{tikzcd}
\]

\begin{definition}[Minimum norm of a test-configuration, projective case]We define the \emph{minimum norm} of $(\X,\scL)$ to be $$\|(\X,\scL)\|_m = g^*\scL.(q^*L)^n - \frac{(g^*\scL)^{n+1}}{n+1}.$$
\end{definition}
\begin{proposition} The minimum norm of $(\mathcal X,\mathcal L)$ is non-negative.  Moreover a test-confi\-guration is isomorphic to $(X\times\pr^1,L)$ if and only if its minimum norm is zero. \end{proposition}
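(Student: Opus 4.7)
The plan is to reduce the minimum norm to an intersection number involving a divisor supported on the central fibre of $\mathcal Y \to \mathbb P^1$ and then apply a Hodge-index-type inequality.

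First, using the $\mathbb C^*$-equivariant trivialisation $(\mathcal X \setminus \mathcal X_0, \mathcal L) \cong (X\times\mathbb C^*, p_X^*L)$ (after reducing to exponent one), I would arrange the resolution $\mathcal Y$ so that both $g$ and $q$ are isomorphisms over $\mathbb P^1\setminus\{0\}$. Under these identifications $g^*\mathcal L$ and $q^*L$ agree on $\mathcal Y\setminus\mathcal Y_0$, so $D := g^*\mathcal L - q^*L$ is a $\mathbb Q$-Cartier divisor supported on the central fibre $\mathcal Y_0$. Since $\dim X = n$ we have $(q^*L)^{n+1} = q^*p_X^*(L^{n+1}) = 0$, and substituting $g^*\mathcal L = q^*L + D$ into the definition of the norm and expanding via the binomial theorem gives, after cancellation,
\begin{equation*}
\|(\mathcal X,\mathcal L)\|_m = -\frac{1}{n+1}\sum_{k=2}^{n+1}\binom{n+1}{k}(q^*L)^{n+1-k}\cdot D^k.
\end{equation*}

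The main step is to show each term $(q^*L)^{n+1-k}\cdot D^k$ has sign $(-1)^k$, so that the whole sum is non-positive. This is a form of the negativity lemma for vertical divisors on a flat family over a curve: the base case $k=2$ is Zariski's lemma (giving $(q^*L)^{n-1}\cdot D^2\le 0$ with equality iff $D$ is numerically a rational multiple of $\mathcal Y_0$), and the higher $k$ follow by iterating a Khovanskii--Teissier / Hodge-index argument against the nef class $q^*L$, using that $q^*L$ is the pullback from $X$ of an ample class so its restriction to any irreducible component of $\mathcal Y_0$ is nef and big. Combining these inequalities yields $\|(\mathcal X,\mathcal L)\|_m\ge 0$.

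For the equality case, vanishing of the norm forces each $(q^*L)^{n+1-k}\cdot D^k$ to vanish; in particular the $k=2$ Zariski inequality is an equality, so $D$ is numerically a rational multiple of the central fibre class. Using that the fibres of $\pi$ are all linearly equivalent on $\mathbb P^1$, after modifying by a principal divisor pulled back from $\mathbb P^1$ we may assume $g^*\mathcal L = q^*L$ as line bundles on $\mathcal Y$. Pushing forward by $g$, and using normality of $\mathcal X$ together with the $\mathbb C^*$-equivariance, the birational map $f\colon X\times\mathbb P^1\dashrightarrow\mathcal X$ then extends to a $\mathbb C^*$-equivariant isomorphism identifying $p_X^*L$ with $\mathcal L$. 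I expect the most delicate step to be the higher-$k$ inequalities and the corresponding equality characterisation; the alternative is to recognise $\|(\mathcal X,\mathcal L)\|_m$ as (a constant times) the non-Archimedean $J$-functional of Boucksom--Hisamoto--Jonsson and quote the positivity statement from there, which packages exactly this Hodge-index input.
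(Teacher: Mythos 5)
Your reduction is fine: after arranging $g$ and $q$ to be isomorphisms away from the central fibre, $D:=g^*\mathcal L-q^*L$ is vertical, $(q^*L)^{n+1}=0$, and the binomial expansion does give
\begin{equation*}
\|(\mathcal X,\mathcal L)\|_m=-\frac{1}{n+1}\sum_{k=2}^{n+1}\binom{n+1}{k}(q^*L)^{n+1-k}\cdot D^k.
\end{equation*}
The gap is in your main step: the terms do \emph{not} individually have sign $(-1)^k$, and no iteration of Hodge index produces that. Writing $D^k=D^{k-1}\cdot g^*\mathcal L-D^{k-1}\cdot q^*L$ expresses each term as a difference of two quantities of the same claimed sign, so the induction gives nothing; and your supporting assertion that $q^*L$ is nef and big on every component of $\mathcal Y_0$ is false on $q$-exceptional components, where $q^*L$ restricts to a class that is nef but has zero top self-intersection. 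A concrete counterexample to the termwise claim: take $X$ a surface, $Z\subset X$ a smooth curve with $Z^2>0$, and $\mathcal Y=\Bl_{Z\times\{0\}}(X\times\pr^1)$ with $D=-cE+mF$ (deformation to the normal cone, $F$ the class of the central fibre). One computes $E^3=-Z^2$ and $E^2\cdot F=E\cdot F^2=0$, so the $k=3$ term is $D^3=c^3Z^2>0$, whereas your claim requires it to be $\le 0$. The total is still non-negative only because the $k=2$ term $3\,q^*L\cdot D^2=-3c^2(L\cdot Z)$ dominates for small $c$; so the sign structure you posit is genuinely wrong, not just unproven.

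The correct organisation — which is the one in the references the paper actually cites for this proposition (Dervan, Theorem 1.3, and Boucksom--Hisamoto--Jonsson), since the paper gives no independent proof — only ever squares $D$, pairing $D^2$ with mixed monomials in the two nef classes $A:=g^*\mathcal L$ and $B:=q^*L$. Set $b_j:=-D\cdot A^j\cdot B^{n-j}$; then $b_{j+1}-b_j=-D^2\cdot A^j\cdot B^{n-1-j}\ge 0$ by the Zariski-type negativity lemma for a vertical divisor against $n-1$ nef classes (your $k=2$ input, which is the only one needed), while $A\cdot B^n=-b_0$ and $A^{n+1}=-\sum_{j=0}^n b_j$, whence $\|(\mathcal X,\mathcal L)\|_m=\frac{1}{n+1}\sum_{j=0}^n(b_j-b_0)\ge 0$. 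Equality forces $D^2\cdot A^j\cdot B^{n-1-j}=0$ for all $j$, and the equality case of the Zariski lemma (using relative ampleness of $\mathcal L$) makes $D$ numerically a multiple of the whole fibre, after which your argument for descending to an isomorphism with $(X\times\pr^1,L)$ is essentially the right one. Your fallback of quoting the positivity of the non-Archimedean $J$-functional from Boucksom--Hisamoto--Jonsson is legitimate and is in fact exactly what the paper does; but as a self-contained argument your proposal needs the rearrangement above in place of the alternating-sign claim.
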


\begin{proof}
The second statement is proved in \cite[Theorem 1.3]{Dervan} (and the non-negativity of the norm is proved along the way) and independently in \cite{BHJ}.   We invite the reader to compare with similar results in \cite{LejmiSz}. 
\end{proof}

\begin{definition}[K-stability for projective varieties]We say a polarised variety $(X,L)$ is \begin{enumerate}\item \emph{K-semistable} if $\DF(\X,\scL)\geq 0$ for all test-configurations $(\X,\scL)$ for $(X,L)$, 
\item \emph{K-stable} if $\DF(\X,\scL)>0$ for all test-configurations $(\X,\Omega)$ for $(X,L)$ with $\|(\X,\scL)\|_m>0$, \item \emph{uniformly K-stable} if there exists an $\epsilon>0$ such that $\DF(\X,\co)\ge\epsilon\|(\X,\scL)\|_m $ for all test-configurations $(\X,\scL)$ for $(X,L)$.\end{enumerate}\end{definition}

\begin{remark}\begin{enumerate}
\item The definition above of the minimum norm varies slightly to that given in \cite{Dervan}. Namely, the definition here is equal to the non-Archimedean $J$-functional as defined in \cite{BHJ}, while the definition in \cite{Dervan} corresponds to the non-Archimedean $I-J$-functional. Analogous to Lemma \ref{IJarenonneg}, it is proved in \cite{BHJ} 
that this does not affect the definition of (uniform) K-stability. 
\item The above definitions differ slightly from elsewhere in the literature.  One can instead consider a test-configuration as a flat family over $\mathbb C$ with the same properties.    However this changes nothing as any such family is isomorphic to to $(X\times\C^*,p_1^*L)$ away from $t=0$ so one can glue in the trivial family around infinity to compactify to give a test-configuration in the above sense.  Furthermore we assume that the total space $\X$ is normal, but this is known not to affect the definition of K-semistability or uniform K-stability \cite[Proposition 5.1]{RossThomas} (and also does not affect the definition of K-stability as we assume in (ii) that $\|(\mathcal X,\mathcal L)\|_m$ is strictly positive). Moreover the usual definition of K-stability often requires $\scL$ to be \emph{relatively} ample, again this is easily seen to be equivalent to our definition.  Finally Odaka and Wang \cite{Odakaslope,WangHeight} both prove that the definition of the Donaldson-Futaki invariant given here agrees with the definition of Donaldson \cite{DonaldsonToric} that is given in terms of the weight of the induced $\C^*$-action on the section ring of the central fibre of $(\X,\scL)$, see also \cite{LiXu}.   This, in turn, is equivalent to Futaki's original invariant (as originally used by Tian \cite{Tian97}) when the central fiber is normal \cite{DonaldsonToric}.
\item In the Fano case the most important of these is K-stability, since it has been proved by Chen-Donaldson-Sun \cite{CDSI,CDSII,CDSIII} that a Fano manifold admits a K\"ahler-Einstein metric if and only if $(X,-K_X)$ is K-stable.  For general polarisations it is strongly expected that something more than K-stability is needed to guarantee the existence of a canonical K\"ahler metric (such as one with constant scalar curvature), and uniform K-stability is a candidate for this \cite{BHJ,Dervan}.  In \cite{Sz1} Sz\'ekelyhidi proposes a beautiful notion of K-stability using filtrations that can be thought of as a form of uniform K-stability restricted to a particular set of sequences of test-configurations.
\item In the projective case, it is known that the existence of a canonical K\"ahler metric implies K-stability \cite{bermankpoly, DonaldsonCalabi, Stoppablowup, Tian97}, see also \cite{BermanDarvasLu,CS, StoppaSz} for related results in the presence of automorphisms.
\item The definition of K-stability is unchanged if one restricts to test-configur\-ations whose central fibre $\mathcal X_{0}:=\pi^{-1}(0)$ is reduced (that is, one demands only that the relevant inequality for the Donaldson-Futaki invariant holds for this restricted class of test-configurations).  In fact, one can perform a base change and then normalisation to obtain from any test-configuration a new test-configuration with reduced central fibre, whilst controlling both the Donaldson-Futaki invariant \cite[Proposition 7.14, Proposition 7.15]{BHJ} and the minimum norm \cite[Remark 7.11, Prop 7.23]{BHJ}.  We prove the analytic counterpart of this statement in Proposition \ref{prop:simplifiedtestconfig}.
\end{enumerate}
\end{remark}

\subsection{K-stability in the non-projective case}

To define K-stability in the non-projective case we will use the notion of a smooth K\"ahler form on an analytic space.   To discuss this we first need the notion of forms on such a space \cite{Demailly}, so let $X$ be a complex analytic space that is reduced and of pure dimension $n$.  We let $X_{\reg}$ and $X_{\sing}$ denote respectively the regular and singular locus of $X$.  Roughly, a smooth form on $X$ is defined locally as the restriction of a smooth form from some embedding of $X$ into affine space.  That is, if $j\colon X\to \Omega$ is a local embedding where $\Omega\subset \mathbb C^N$ is open, a $(p,q)$-form on $X$ is defined as the image of the restriction map
$$j^*\colon \mathcal A^{p,q}(\Omega) \to \mathcal A^{p,q}(X_{\reg}).$$
  That this is well-defined is verified in \cite[p14]{Demailly} and comes from the fact that given any other local embedding $j'\colon X\to \Omega'\subset \mathbb C^{N'}$ there exists (locally) holomorphic $f\colon \Omega\to \mathbb C^{N'}$ and $g\colon \Omega'\to \mathbb C^N$ such that $j'=fj$ and $j=gj'$, from which one can check that the image of $j^*$ and $j'^*$ agree. In particular this definition agrees with the usual definition of smooth forms on the smooth locus $X_{\reg}$.   The exterior derivative, wedge product are defined in the obvious way on the regular locus.   If $F\colon X\to Y$ is a morphism between analytic spaces then one can define a pullback $F^*$ of forms by considering local embeddings $j\colon X\to \Omega$ and $j'\colon Y\to \Omega'$ such that $F$ lifts to a map $\Omega\to \Omega'$ and pulling back from $\Omega'$ to $\Omega$.   That this is well defined is verified in  \cite[Lemma 1.3]{Demailly} (the only subtlety being when $F(X)$ is contained in the singular locus of $Y$).

\begin{definition}[K\"ahler space]
A \emph{K\"ahler space} $(X,\omega)$ is an analytic space $X$ and a smooth $(1,1)$-form $\omega$ on $X$ that is locally the restriction of a smooth K\"ahler form under an embedding of $X$ into an open subset of some $\mathbb C^N$.  We refer to $\omega$ as a \emph{smooth K\"ahler form} on $X$, and observe that for the same reason as above this notion is well-defined.   
\end{definition}

From now on we let $X$ be a compact K\"ahler manifold of dimension $n$ and $\omega$ a K\"ahler form with cohomology class $[\omega]$. 

\begin{definition}[Test-configuration] A test-configuration for $(X,[\omega])$ is a normal K\"ahler space $(\X,\co)$, together with
\begin{enumerate}
\item a surjective flat map $\pi: \X\to\pr^1$,
\item a $\C^*$-action on $\X$ covering the usual action on $\pr^1$ such that $\Omega$ is $S^1$-invariant and so that the $\C^*$ action preserves the Bott-Chern cohomology class of $\Omega$,
\item A biholomorphism $\alpha:\pi^{-1}(\mathbb P^1\setminus \{0\})\simeq X\times \mathbb P^{1}\setminus\{0\}$ that is $\mathbb C^*$-equivariant such that for all $t\in \mathbb P^1\setminus \{0\}$ we have $[\Omega_t] = [\alpha_t^*\omega]$ as cohomology classes.
\end{enumerate}
\end{definition}

\begin{examples} As the above definition of a test configuration is new, we give some simple examples.  Note that if $X$ is in fact projective then one class of test configurations for $(X,[\omega])$ can be obtained by taking a non-integral K\"ahler form $\Omega$ on a usual (i.e. projective) test configuration for $X$.    In the following we will see there are genuinely non-projective examples.

\begin{enumerate}
\item (Products)  If a K\"ahler manifold $(X,[\omega])$ admits a holomorphic $\mathbb C^*$-action, then the product $X \times \C$ admits an induced action. One can compactify to a family $\pi:\X\to\pr^1$,  which admits a relatively K\"ahler class  $\alpha$ induced from $[\omega]$. Picking an $S^1$-invariant $\co\in\alpha$ gives a test configuration with respect to $\co+\pi^*\omega_{FS}$, called a product test configuration.

\item (Degeneration to normal cone) Suppose $Y\subset X$ is a submanifold of a compact complex manifold $X$, and let $p:\mathcal X\to X\times \mathbb P^1$ be the blowup of $Y\times \{0\}$ inside $X\times \mathbb P^1$.  Then the product $\mathbb C^*$-action on $X\times \mathbb P^1$ that acts trivially on the $X$ factor lifts to $\mathcal X$.  Letting $[E]$ denote the cohomology class of the exceptional divisor, for sufficiently small $c>0$ the cohomology class $p^*[\omega]- c[E]$ admits a K\"ahler form $\Omega$ whose restriction to $1\in \mathbb P^1$ is $\omega$ \cite[Lemma 3.4]{Ver}, and by averaging over the $S^1$-action we may assume $\Omega$ is $S^1$-invariant.   Thus $(\mathcal X,\co)$ is a test configuration for $(X,[\omega])$.   K-stability with respect to test-configurations constructed in this way gives the notion of slope-stability of $X$ with respect to $Y$ which was first studied in the projective case by Ross-Thomas \cite{RossThomas,RossThomas2}  and in the analytic case by Stoppa \cite{StoppaSlope}.    The reader will find further (non-projective) examples in \cite[Sec 5]{StoppaSlope}, including a slope-unstable K\"ahler manifold that is not deformation equivalent to any projective manifold.
\item (Toric test-configurations)  The well known correspondence between toric manifolds and polytopes gives rise to test configurations that are themselves toric.   Suppose that $(X,\omega)$ is a toric K\"ahler manifold and $P$ the image of the moment polytope.  We recall that when $\omega$ is an integral class, $P$ will be a lattice polytope, but there are examples of toric K\"ahler manifolds that are not projective \cite[p84]{Oda}.   Let $f: P\to \mathbb R$ be a concave strictly positive piecewise linear function.  Then the polytope $Q = \{ (x,t) \in P\times \mathbb R: t\le f(x)\}$ is a polytope of one dimension higher than $P$, that gives rise to a toric $\mathcal X$ that is the total space of a test-configuration for $(X,[\omega])$.    This idea been studied in detail by Donaldson (e.g. \cite{DonaldsonToric}) who emphasises the projective case, but much of what is written does not require this hypothesis.
\item (Projective bundles) Let $(B,\omega_B)$ be a K\"ahler manifold and $E$ be a hermitian holomorphic vector bundle on $B$.   Set $X = \mathbb P(E)$.  Then the hermitian metric on $E$ induces a $(1,1)$-form on $X$ that is positive in the fibre directions, and so pulling back by a suitable multiple of $\omega_B$ gives a K\"ahler form $\omega$ on $X$.   We can then form a test configuration for $(X,[\omega])$ starting with any degeneration of $E$ to a vector bundle $E_0$, by thinking of such a degeneration as a holomorphic vector bundle $\mathcal E$ over $B\times \mathbb C$ and letting $\mathcal X = \mathbb P(\mathcal E)$ over $\mathbb C$ and then glueing the trivial family over $\pr^1\setminus\{0\}$.
\item (Limits of submanifolds) Suppose $(P,\omega_P)$ is a complete non-projective K\"ahler manifold that admits a holomorphic $\mathbb C^*$-action preserving $[\omega_P]$   (for example one may take $P$ to be the projectivization of a vector bundle that admits a $\mathbb C^*$-action).  Let $X\subset P$ be a submanifold and $\omega:= \omega_P|_X$.    Moving $X$ with the $\mathbb C^*$-action gives a family over $\mathbb C^*$ of submanifolds of $P$ which we may think of as a morphism from $\mathbb C^*$ to the Douady space of $P$.  Since $P$ is a compact K\"ahler manifold, a theorem of Fujiki \cite[Theorem 4.5]{Fujiki} allows one to complete this to get a family $\mathcal X\subset \mathbb C\times P$ which is flat over $\mathbb C$.  Normalising and then glueing the trivial family over $\pr^1\setminus\{0\}$ gives a test-configuration for $(X,[\omega])$. 
 \end{enumerate}
\end{examples}
\color{black}

\begin{definition}[Slope of a K\"ahler manifold]The \emph{slope} of a K\"ahler manifold is defined to be $$\mu(X,\omega) := \frac{c_1(X).[\omega]^{n-1}}{[\omega]^{n}}.$$\end{definition}

For test-configurations with smooth total space we can define the Donaldson-Futaki invariant by complete analogy to the projective case:

\begin{definition}[Donaldson-Futaki invariant I]\label{def:DFI}Let $(\pi:\X\to \mathbb P^1,\co)$ be a test-confi\-guration for $(X,[\omega])$  with $\X$ smooth. We define the \emph{Donaldson-Futaki invariant} of a $(1,1)$-form $\Omega'$ on $\mathcal X$ to be
\begin{equation*}
\DF(\X,\Omega') :=  \frac{n}{n+1}\mu(X,\omega) [\Omega']^{n+1} - (c_1(\X) - \pi^*c_1(\pr^1)).[\Omega']^n.\label{eq:DFI}\end{equation*}
The \emph{Donaldson-Futaki invariant} of $(\mathcal X, \Omega)$ is defined to be $DF(\mathcal X,\Omega)$.
\end{definition}

 When the test-configuration is singular, we take a resolution of singularities $p:\scY\to\X$. By Hironaka's Theorem we can, and will, assume that $p$ is an isomorphism away from the central fibre of $\X$, so that there are equivariant isomorphisms $$\scY\backslash \scY_0\cong\scX\backslash \scX_0\cong X\times\C.$$ Moreover we assume that $\Y$ is constructed from $\X$ by successive blowups along smooth centres, and we will always assume our resolutions are equivariant. Observe that the semi-positive form $p^*\co$ may not be strictly positive on $\Y$ (and so is not a K\"ahler form).

\begin{definition}[Donaldson-Futaki invariant II]
Let $(\X,\Omega)$ be a test-config\-uration for $(X,\omega)$ and $p:\scY\to\X$ be a resolution.  We define the \emph{Donaldson-Futaki} invariant of $(\X,\Omega)$ to be
  \begin{align*}
  \DF(\X,\co) &:=  DF(\mathcal Y,p^*\Omega)  \\
  &=\frac{n}{n+1}\mu(X,\omega) [p^*\co]^{n+1} - (c_1(\Y) - (\pi\circ p)^*c_1(\pr^1)).[p^*\co]^n.\end{align*}
  \end{definition}

\begin{lemma}\label{lem:IndOfResSing} The Donaldson-Futaki invariant is independent of resolution. 

\end{lemma}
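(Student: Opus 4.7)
The plan is to reduce to comparing two resolutions related by a birational morphism, and then to verify term-by-term that the two expressions for $DF$ coincide using the projection formula.

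First, given two equivariant resolutions $p_i\colon \Y_i\to \X$ ($i=1,2$) that are isomorphisms away from the central fibre, I would form the fibre product $\Y_1\times_{\X}\Y_2$ and take an equivariant resolution of singularities $\Y_{12}$ (available by equivariant Hironaka, e.g.\ in the analytic category). This produces a smooth space $\Y_{12}$ with equivariant birational maps $q_i\colon \Y_{12}\to \Y_i$ satisfying $p_1\circ q_1=p_2\circ q_2$, still an isomorphism away from the central fibre. It therefore suffices to prove the following: if $q\colon \Y'\to\Y$ is a proper equivariant birational morphism between two smooth equivariant resolutions of $\X$, then
\[
\tfrac{n}{n+1}\mu(X,\omega)[p^*\co]^{n+1}-(c_1(\Y)-(\pi\circ p)^*c_1(\pr^1)).[p^*\co]^n
\]
equals the analogous expression computed on $\Y'$ with $p^*\co$ replaced by $(p\circ q)^*\co$.

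The verification proceeds term by term. For the top intersection, the projection formula for the proper birational map $q$ gives $[(p\circ q)^*\co]^{n+1}=q_*(q^*[p^*\co]^{n+1})=[p^*\co]^{n+1}$, since $q_*1=1$. The same reasoning applied to $(\pi\circ p\circ q)^*c_1(\pr^1).[(p\circ q)^*\co]^n=(\pi\circ p)^*c_1(\pr^1).[p^*\co]^n$ handles the $c_1(\pr^1)$ term. For the canonical class term one writes
\[
c_1(\Y')=q^*c_1(\Y)-[K_{\Y'/\Y}],
\]
where $K_{\Y'/\Y}$ is the relative canonical divisor, which is supported on the exceptional locus of $q$. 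Another application of the projection formula yields
\[
q^*c_1(\Y).[(p\circ q)^*\co]^n=c_1(\Y).[p^*\co]^n,
\]
and it remains to show that $[K_{\Y'/\Y}].[(p\circ q)^*\co]^n=0$. Since every irreducible component of the support of $K_{\Y'/\Y}$ is a prime divisor whose image under $q$ has codimension at least two in $\Y$, one has $q_*[K_{\Y'/\Y}]=0$ as a class of divisors, and so a final use of the projection formula gives
\[
[K_{\Y'/\Y}].[(p\circ q)^*\co]^n=q_*[K_{\Y'/\Y}].[p^*\co]^n=0.
\]

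The main obstacle I anticipate is ensuring the formal setup: one needs the pushforward/projection formula to be available in this mixed analytic/cohomological setting (where $\co$ is only a smooth K\"ahler form on the singular space $\X$, so $p^*\co$ is merely semipositive on $\Y$), and one needs to justify the vanishing $q_*[K_{\Y'/\Y}]=0$ as a Bott--Chern class. Both issues are handled by the fact that $q$ is an isomorphism outside a proper analytic set contained in the central fibre, so all intersection numbers may be interpreted cohomologically on the smooth compact K\"ahler manifolds $\Y$ and $\Y'$, where the classical projection formula for proper maps of K\"ahler manifolds applies. Once this is in place the computation above is routine.
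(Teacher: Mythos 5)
Your proposal is correct and follows essentially the same route as the paper: reduce to a tower of resolutions $\Y'\stackrel{q}{\to}\Y\stackrel{p}{\to}\X$ via a common dominating resolution, write $c_1(\Y')=q^*c_1(\Y)-[K_{\Y'/\Y}]$ with $K_{\Y'/\Y}$ supported on the $q$-exceptional locus, and observe that this exceptional class pairs to zero against $[(p\circ q)^*\co]^n$. The only (cosmetic) difference is that you package the vanishing via $q_*[K_{\Y'/\Y}]=0$ and the projection formula, whereas the paper computes the integral directly by restricting $q^*p^*\co^n$ to each exceptional component and noting it is pulled back from a subvariety of $\Y$ of dimension at most $n-1$; these are the same dimension count.
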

\begin{proof}

Let $\mathcal Y'\stackrel{q}{\to} \mathcal Y \stackrel{p}{\to} \mathcal X$ be a tower of resolutions.  Then $q^* c_1(\mathcal Y)  = c_1(\mathcal Y') + [D]$ where $D$ is a sum of divisors that are exceptional for $q$.  So $[D].[q^*p^* \Omega]^n=0$ since $q$ maps each component of $D$ to something of dimension at most $n-1$.  In fact if $D =\sum a_i D_i$ then $[D_i][q^*p^*\Omega]^n = \int_{D_i} q^*p^*\Omega^n = \int_{D_i} \iota^* q^*p^* \Omega^n$ where $\iota:D_i\to \mathcal Y'$ is the inclusion.  But $p\circ \iota$ factors though some submanifold $Z$ of $\mathcal Y$ and $q^*p^*\Omega^n|_Z=0$ for dimension reasons; compare \cite[7.2]{DemaillyMonge}.  Using this one sees immediately that
$$DF(\mathcal Y,p^*\Omega) = DF(\mathcal Y',q^*p^* \Omega).$$
Since any two resolutions of $\mathcal X$ are dominated by a third, this proves the lemma.
\end{proof}

\begin{remark} To see that the term $[p^*\co^{n+1}]$ is independent of resolution one can also simply note that $[p^*\co^{n+1}] = \int_{\X_{reg}} \co^{n+1}$.\end{remark}

\color{black}

\begin{remark}\label{RicciCurrent}

One could instead attempt to define the Donaldson-Futaki invariant on a test-configuration with normal total space by making sense of the intersection products in Definition \ref{def:DFI}.  For example \cite{Dyrefelt} does this using the intersection in Bott-Chern cohomology. We have instead chosen to define the Donaldson-Futaki invariant using a resolution of singularities, which is equivalent and changes rather little for us, since in the end we want to work on such a resolution.
\end{remark}

As in the projective case we also need a notion of a norm of a test-configuration. Let $(\X,\co)$ be a test-configuration for  $(X,[\omega])$. Denote by $$f: X\times\pr^1 \dashrightarrow \X$$ the natural bimeromorphic map. We can take a resolution of indeterminacy as follows. \[
\begin{tikzcd}
\Y \arrow[swap]{d}{q} \arrow{dr}{g} &  \\
X\times\pr^1 \arrow[dotted]{r}{f} & \X
\end{tikzcd}
\]

\begin{definition}[Minimum norm of a test-configuration] \label{def:minnormanalytic} Let $(\mathcal X,\Omega)$ be a test-configuration for $(X,[\omega])$.   We define the \emph{minimum norm} of a $(1,1)$-form $\Omega'$ on $\mathcal X$ to be $$\|(\X,\Omega')\|_m = [g^*\Omega'].[q^*\omega]^n - \frac{[g^*\Omega']^{n+1}}{n+1}.$$
The \emph{minimum norm} of $(\mathcal X,\Omega)$ is defined to be $\|(\X,\Omega)\|_m$.
\end{definition}

Just as in Lemma \ref{lem:IndOfResSing}, we have the following. 

\begin{lemma}\label{canworkonres} The minimum norm is independent of choice resolution of indeterminacy. \end{lemma}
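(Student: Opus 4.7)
The plan is to mirror the strategy of Lemma~\ref{lem:IndOfResSing} essentially verbatim. First I would note that any two resolutions of indeterminacy for $f\colon X\times\pr^1\dashrightarrow\X$ are dominated by a common third (resolve the indeterminacy of the induced bimeromorphic map between them, staying equivariant and an isomorphism away from the central fibres). So it suffices to show that if $\pi\colon\Y'\to\Y$ is an equivariant proper bimeromorphic morphism between two smooth resolutions, isomorphic off the central fibre, with $g'=g\circ\pi$ and $q'=q\circ\pi$, then
\[
[g'^*\co].[q'^*\omega]^n-\tfrac{[g'^*\co]^{n+1}}{n+1}
=[g^*\co].[q^*\omega]^n-\tfrac{[g^*\co]^{n+1}}{n+1}.
\]

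The key step is to verify that for any smooth $(1,1)$-classes $\alpha_1,\ldots,\alpha_{n+1}$ on $\Y$ one has $[\pi^*\alpha_1]\cdots[\pi^*\alpha_{n+1}]=[\alpha_1]\cdots[\alpha_{n+1}]$. This is the projection formula: $\pi$ is a proper bimeromorphic map of degree one between smooth compact complex manifolds of the same dimension, so $\pi_*\pi^*=\mathrm{id}$ on top cohomology. Alternatively, and in the spirit of Lemma~\ref{lem:IndOfResSing}, one can write any such comparison as an intersection involving $\pi$-exceptional divisors $D=\sum a_iD_i$, and observe that $[D_i].[\pi^*\beta_1]\cdots[\pi^*\beta_n] = \int_{D_i}\iota^*\pi^*(\beta_1\wedge\cdots\wedge\beta_n)$ vanishes for dimension reasons, since $\pi(D_i)$ is contained in a proper analytic subset of $\Y$. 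Applying this to $\alpha_i=g^*\co$ or $\alpha_i=q^*\omega$ (both being smooth $(1,1)$-forms on the smooth manifold $\Y$, the former obtained by pulling the $(1,1)$-form $\Omega$ on the normal analytic space $\X$ through the resolution $g$) gives the required equality of both the $[g^*\co].[q^*\omega]^n$ and the $[g^*\co]^{n+1}$ terms.

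The main obstacle is really only bookkeeping: making sure the intersection products in Definition~\ref{def:minnormanalytic} are actually well-defined cohomological pairings to which the projection formula may be applied. Since we always work on a \emph{smooth} resolution, and $g^*\Omega$ and $q^*\omega$ are bona fide smooth $(1,1)$-forms with Dolbeault classes in $H^{1,1}(\Y,\R)$, there is no issue. Everything else is then formal. As an aside, just as in the remark following Lemma~\ref{lem:IndOfResSing}, the self-intersection $[g^*\Omega]^{n+1}$ can alternatively be identified with $\int_{\X_{\mathrm{reg}}}\Omega^{n+1}$, making its independence of resolution transparent; only the mixed term truly needs the argument above.
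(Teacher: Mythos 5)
Your proposal is correct and takes essentially the same approach as the paper, which gives no separate argument and simply asserts the lemma ``just as in Lemma~\ref{lem:IndOfResSing}'': dominate two resolutions by a common third and observe that top-degree intersections of pullback classes are preserved. Your observation that the situation here is even simpler than in Lemma~\ref{lem:IndOfResSing} --- since no $c_1$ term appears, every class involved is a pullback and the projection formula applies directly --- is accurate.
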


We will prove the following in Proposition \ref{min-norm-is-nonneg-proof}. 

\begin{proposition}\label{MinimumNormIsNonnegative} The minimum norm of a test-configuration is non-negative.\end{proposition}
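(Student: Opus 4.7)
Our approach is to work on the resolution of indeterminacy $\mathcal Y$ and to set $A := [g^*\Omega]$ and $B := [q^*\omega]$, both semipositive $(1,1)$-classes on the compact K\"ahler manifold $\mathcal Y$ of dimension $n+1$. Since $B$ is pulled back from the $n$-dimensional $X$ via $p_X\circ q$, we have $B^{n+1}=0$, so the claim $\|(\mathcal X,\Omega)\|_m\geq 0$ is equivalent to
$(n+1)\,A\cdot B^n \geq A^{n+1}.$

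The elementary algebraic identity
$$a^{n+1} - (n+1)\,a b^n + n b^{n+1} = (a-b)^2\sum_{j=0}^{n-1}(n-j)\,a^j b^{n-1-j},$$
applied with $(a,b)=(A,B)$ and using $B^{n+1}=0$, yields
$$(n+1)\|(\mathcal X,\Omega)\|_m = -D^2\cdot P, \qquad D := A - B,\quad P := \sum_{j=0}^{n-1}(n-j)\,A^j B^{n-1-j}.$$
Since $P$ is a nonnegative combination of products of semipositive classes, the problem reduces to the negativity statement $D^2\cdot P \leq 0$.

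The decisive structural input is that $D$ is cohomologically trivial on each non-central fibre: the biholomorphism in the definition of a test-configuration identifies $[\Omega_t]$ with $[\omega]$ for every $t\neq 0$, so $A|_{\mathcal Y_t} = B|_{\mathcal Y_t}$ in cohomology for all such $t$. Combining this with the $S^1$-invariance of $D$, one expects the required negativity from a Hodge--Riemann/Khovanskii--Teissier type argument: perturbing both $A$ and $B$ to the strictly positive classes $A+\varepsilon\omega_{\mathcal Y}$ and $B+\varepsilon\omega_{\mathcal Y}$ for a fixed K\"ahler form $\omega_{\mathcal Y}$ on $\mathcal Y$, one applies the mixed Hodge index inequality termwise to each summand to obtain $D^2\cdot A_\varepsilon^{\,j} B_\varepsilon^{\,n-1-j}\leq 0$ modulo correction terms which vanish in the limit $\varepsilon\to 0$ thanks to the fibrewise vanishing of $D$ (which forces $D\cdot A^k B^{n-k}$ to be independent of the splitting between $A$'s and $B$'s at leading order).

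The principal technical obstacle is making the Hodge--Riemann step rigorous on a K\"ahler (rather than projective) manifold with semipositive rather than K\"ahler weights, and in particular controlling the perturbation $\varepsilon\to 0$; the necessary mixed Hodge--Riemann relations are however now available in this generality. An alternative analytic route, essentially due to Berman--Boucksom--Jonsson, sidesteps the intersection-theoretic inequality by identifying $\|(\mathcal X,\Omega)\|_m$ with the asymptotic slope, as $t\to\infty$, of the Aubin $I-J$ energy along the weak geodesic ray of K\"ahler potentials on $X$ produced by the test-configuration. Since the $I-J$ energy is pointwise non-negative on $\mathcal H_\omega$, the same holds for its asymptotic slope, yielding $\|(\mathcal X,\Omega)\|_m\geq 0$ without directly invoking Hodge theory on $\mathcal Y$.
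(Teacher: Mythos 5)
Your second, analytic route is essentially the proof the paper gives: Proposition \ref{min-norm-is-nonneg-proof} deduces non-negativity from Theorem \ref{MinNormAsSlope}, which identifies $\|(\X,\co)\|_m$ with $\lim_{\tau\to\infty}\tfrac{d}{d\tau}J(\theta_\tau)$ along the explicit path of potentials $\theta_\tau$ induced by the $\C^*$-action (no geodesic ray is needed here), and then concludes from $J\ge 0$ (Lemma \ref{IJarenonneg}). Two small corrections: the minimum norm as defined in Definition \ref{def:minnormanalytic} is the asymptotic slope of $J$, not of $I-J$ (the $I-J$ normalisation is the one used in \cite{Dervan}; by Lemma \ref{IJarenonneg} either yields non-negativity), and the identification of the norm with that slope is established inside this paper (Proposition \ref{J-functional-prelim} together with the pushforward and boundary-term argument of Theorem \ref{VolumeIsAM}) rather than imported from elsewhere.

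Your first, intersection-theoretic route is genuinely different from the paper's and is the one used in the projective case in \cite{Dervan,BHJ}: the algebraic identity correctly reduces the claim to $D^2\cdot P\le 0$ with $D=[g^*\Omega]-[q^*\omega]$ cohomologically trivial on non-central fibres, and the intended input is a mixed Hodge--Riemann/Khovanskii--Teissier inequality for semipositive classes on the compact K\"ahler manifold $\mathcal Y$. As you acknowledge, the termwise Hodge index step and the control of the perturbation $\varepsilon\to 0$ are not carried out, so as written this route is a sketch with a genuine gap rather than a proof. If completed it would buy a purely cohomological argument independent of the functionals of Section \ref{sec:semistability}; as it stands, only your second route closes the statement, and that route coincides with the paper's.
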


\begin{remark}\label{minimum-norm-using-co_1} By the definition of a test configuration there is a biholomorphism $f:X\cong \X_1$ such that $[f^*\Omega_1] = [\omega]$.  So abusing notation slightly, the minimum norm is therefore also given as $$\|(\X,\co)\|_m = [g^*\co].[q^*\co_1]^n - \frac{[g^*\co]^{n+1}}{n+1}.$$\end{remark}

We are now ready to define the notions of stability relevant to us.

\begin{definition}[K-stability for K\"ahler manifolds] We say a K\"ahler manifold $(X,[\omega])$
 is \begin{enumerate} \item \emph{K-semistable} if $\DF(\X,\co)\geq 0$ for all test-configurations $(\X,\Omega)$ for $(X,[\omega])$, 
\item \emph{K-stable} if $\DF(\X,\co)>0$ for all test-configurations $(\X,\Omega)$ for $(X,[\omega])$ with $\|(\X,\co)\|_m>0$, \item \emph{uniformly K-stable} if there exists an $\epsilon>0$ such that $\DF(\X,\co)\ge\epsilon\|(\X,\co)\|_m $ for all test-configurations $(\X,\Omega)$ for $(X,[\omega])$.\end{enumerate}\end{definition}

\
The following proposition says that for K-semistability and uniformly K-stability it is sufficient to consider only test-configurations with smooth total space and whose central fibre is reduced.

\begin{proposition}\label{prop:simplifiedtestconfig}
Let $(X,\omega)$ be a compact K\"ahler manifold.  Then $(X,[\omega])$ is K-semistable (resp.\ uniformly K-stable) if $DF(\mathcal X,\Omega)\ge 0$ (resp.\ there exists an $\epsilon>0$ such that $DF(\mathcal X,\Omega)\ge \epsilon \|(\mathcal X,\Omega)\|_m$) for all test-configurations $(\mathcal X,\Omega)$  for $(X,[\omega])$ such that $\mathcal X$ is smooth and whose central fibre $\mathcal X_0$ is reduced. 

Moreover one can assume the natural bimeromorphic map $\mathcal X\dashrightarrow X\times\pr^1$ is defined on all of $\mathcal X$, so no resolution of indeterminacy is needed in the definition of the minimum norm.
\end{proposition}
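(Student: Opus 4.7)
The plan is to reduce an arbitrary test-configuration $(\X,\co)$ for $(X,[\omega])$ to one satisfying the three required conditions (smooth total space, reduced central fibre, and with the bimeromorphic map to $X\times\pr^1$ everywhere defined) through a sequence of modifications, each preserving the K-semistability and uniform K-stability inequalities.

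First, to achieve reducedness of the central fibre, I would consider the $\C^*$-equivariant base change $\pi_d:\pr^1\to\pr^1$, $z\mapsto z^d$, and pull $(\X,\co)$ back to get $(\X_d,\co_d)$; normalizing yields $(\tilde\X_d,\tilde\co_d)$. For $d$ equal to the least common multiple of the multiplicities of the irreducible components of $\X_0$, the central fibre of $\tilde\X_d$ is reduced. Working on a common smooth resolution and using Definition \ref{def:DFI} and Definition \ref{def:minnormanalytic}, both $\DF$ and $\|\cdot\|_m$ scale by the positive factor $d$ under this operation, so the stability inequalities transfer back to $(\X,\co)$. Next, by Hironaka, I would take an equivariant resolution $p:\Y\to\tilde\X_d$ that is (i) an isomorphism away from the central fibre, (ii) smooth, (iii) a simultaneous resolution of the indeterminacy of the natural bimeromorphic map $\tilde\X_d\dashrightarrow X\times\pr^1$, and (iv) has reduced central fibre (achievable by additional blowups along smooth centres inside $\Y_0$). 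By Lemmas \ref{lem:IndOfResSing} and \ref{canworkonres}, $\DF$ and $\|\cdot\|_m$ are unchanged, and $\Y$ now dominates $X\times\pr^1$ holomorphically, so no further resolution of indeterminacy is required in computing the minimum norm.

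The remaining issue is that $p^*\tilde\co_d$ is only semi-positive, so $(\Y,p^*\tilde\co_d)$ is not yet a genuine test-configuration. To finish, let $E$ denote the exceptional divisor of $p$, which is supported in $\Y_0$. By a standard K\"ahler desingularization argument (compare the construction in the degeneration-to-the-normal-cone example above, together with \cite[Lemma 3.4]{Ver}), for all sufficiently small $\delta>0$ the class $p^*[\tilde\co_d]-\delta[E]$ contains an $S^1$-invariant K\"ahler representative $\co_\delta$. Since $E$ lies over $0\in\pr^1$, the cohomology class of $\co_\delta$ on each fibre $\Y_t$ with $t\neq 0$ is unchanged, so $(\Y,\co_\delta)$ is a genuine test-configuration meeting all the required hypotheses. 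The hypothesized inequality applied to $(\Y,\co_\delta)$, combined with the continuity in $\delta$ of $\DF$ and $\|\cdot\|_m$ (each being polynomial in the cohomology class), yields the desired inequality for $(\Y,p^*\tilde\co_d)$ as $\delta\to 0^+$, and hence for $(\X,\co)$.

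The least routine step is the first: verifying that $\DF$ and $\|\cdot\|_m$ scale linearly under base change and are preserved under normalization in the non-projective K\"ahler setting. In the projective case this is \cite[Proposition 7.14, Proposition 7.15, Proposition 7.23]{BHJ}, whose proofs use algebraic intersection theory on normal varieties; in our setting one must rework the computations consistently on smooth resolutions while ensuring that $S^1$-invariance and $\C^*$-equivariance of all constructions are maintained throughout. Once this scaling is in hand, the resolution, perturbation, and limiting arguments of Steps 2 and 3 are routine adaptations of the projective-case strategy.
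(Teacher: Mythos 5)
Your strategy is essentially the paper's: base change $z\mapsto z^d$ plus normalisation to make the central fibre reduced, an equivariant resolution together with a small K\"ahler perturbation of the pulled-back class to make the total space smooth and dominate $X\times\pr^1$, and a limit $\delta\to 0^+$ using that $\DF$ and $\|\cdot\|_m$ depend polynomially (hence continuously) on the cohomology class. The paper performs the two reductions in the opposite order (resolve and perturb first, base-change second), and realises the K\"ahler perturbation as $\Omega_\delta=(1+\delta)^{-1}(p^*\Omega+\delta\zeta)$ with $\zeta$ a K\"ahler form in $p^*[\Omega]-c[E]$ supplied by \cite[Lemma 2.2]{CMM} (note that the total space being resolved is a normal K\"ahler \emph{space}, so one needs this reference rather than \cite[Lemma 3.4]{Ver}, which concerns blowups of manifolds along smooth centres); but these are cosmetic differences.

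Two of your intermediate claims are, however, not correct as stated, though neither is fatal. First, $\DF$ does not scale exactly by $d$ under base change: the finite map $u\colon\mathcal Y'\to\mathcal Y$ is ramified along the central fibre, so $c_1(\mathcal Y')-\pi'^*c_1(\pr^1)=u^*\bigl(c_1(\mathcal Y)-\pi^*c_1(\pr^1)\bigr)+[R]$ with $R$ effective, and one only obtains $\DF(\mathcal X',\Omega')\le d\,\DF(\mathcal X,\Omega)$, which is what the paper proves. Fortunately this inequality points in the direction you need, and since the minimum norm \emph{does} scale exactly by $d$, the hypothesis applied to $(\mathcal X',\Omega')$ still yields the desired inequality for $(\mathcal X,\Omega)$. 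Second, your parenthetical assertion that reducedness of the central fibre of the resolution is ``achievable by additional blowups along smooth centres inside $\Y_0$'' is false: blowing up, for instance, the intersection of two reduced components of $\Y_0$ produces an exceptional divisor of multiplicity two in the new central fibre. What is actually needed is the semistable reduction statement that the degree $d$ of the base change can be chosen in tandem with the resolution so that the resulting smooth model has reduced central fibre; this is the content of the reference to Koll\'ar--Nicaise--Xu \cite{Kollar-NX} that the paper invokes. With those two repairs your argument coincides with the paper's.
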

\begin{proof}
(1) We first show how to reduce to test-configurations with smooth total space.  Let $(\mathcal X,\Omega)$ be a test-configuration for $(X,[\omega])$.   It is shown in \cite[Lemma 2.2]{CMM} that there exists a resolution of singularities $p: \mathcal Y\to \mathcal X$ such that $\mathcal Y$ has a K\"ahler metric $\zeta$ in the cohomology class $p^*[\Omega] - c [E]$ where $E$ is the exceptional divisor in $\mathcal Y$ and $c$ is some sufficiently small positive real number.    We observe the reference \cite{CMM} applies to the resolution of singularities constructed in \cite{BM}, which is canonical and so the $\mathbb C^*$-action on $\mathcal X$ lifts to $\mathcal Y$; see \cite[Theorem 13.2(2)]{BM}.    Averaging $\zeta$ over the induced $S^1$-action we may assume it is $S^1$-invariant. 

Now, for small $\delta>0$ let
$$\Omega_\delta := (1+\delta)^{-1}( p^* \Omega + \delta \zeta)$$
which is an $S^1$-invariant K\"ahler form on $\mathcal Y$ making $(\mathcal Y,\Omega_\delta)$ a test configuration for $(X,[\omega])$ with smooth total space.  By Lemma \ref{lem:IndOfResSing}, the Donaldson-Futaki invariant of $(\mathcal X,\Omega)$ can be calculated on any resolution, giving
\begin{align*}
DF(\mathcal X,\Omega) &= DF(\mathcal Y,p^*\Omega) = \frac{n}{n+1}\mu(X,\omega) [p^*\co]^{n+1} - (c_1(\mathcal Y)- (\pi p)^*c_1(\pr^1)).[p^*\co]^n\\
&= DF(\mathcal Y,\Omega_\delta) + O(\delta).
\end{align*}
A similar calculation allows one to compare the minimum norm of $(\mathcal X,\Omega)$ and $(\mathcal Y,\Omega_\delta)$.  In detail let $f:X\times\mathbb P^1\dashrightarrow  \mathcal X$ be the natural bimeromorphic map which lifts to a map $\tilde{f}:X\times\mathbb P^1\dashrightarrow \mathcal Y$ so that $p\tilde{f} = f$.

Taking a resolution of indeterminancy of $\tilde{f}$ gives a smooth $\mathcal Z$ fitting in a diagram
\[\begin{tikzcd}
\mathcal Z \arrow{r}{g}\arrow{d}{q}  & \mathcal Y \arrow[swap]{d}{p}   \\
X\times\pr^1 \arrow[dotted]{ur}{\tilde{f}} \arrow[dotted]{r}{f} & \X
\end{tikzcd}
\]
Then $\tilde{g}p:\mathcal Z\to \mathcal X$ is a resolution of indeterminancy of $f$, and so by Lemma \ref{canworkonres} it can be used to calculate both the minimum norm of $(\mathcal X,\Omega)$ and of $(\mathcal Y,\Omega_\delta)$.  Thus
\begin{align*}
 \|(\mathcal Y,\Omega_\delta)\|_m &= [g^* \Omega_\delta] [q^*\omega]^n- \frac{[g^*\Omega_\delta]^{n+1}}{n+1}= [g^* p^*\Omega] [q^*\omega]^n + \frac{[g^*p^*\Omega]^{n+1}}{n+1} + O(\delta)\\
 & = \|(\mathcal X,\Omega)\|_m + O(\delta).
 \end{align*}
This is enough to prove the first statement, for if $(X,[\omega])$ is not K-semistable then there is a test-configuration $(\mathcal X,\Omega)$ for $(X,[\omega])$ with $DF(\mathcal X,\Omega)<0$ and so taking $\delta$ sufficiently small $DF(\mathcal Y,\Omega_\delta)<0$ as well.  Analogous arguments work for uniform K-stability, and the statement regarding the existence of a surjective map to $X\times\pr^1$.

(2) We next show how to reduce to test-configurations with reduced central fibre (this is similar to the proof in the projective case \cite[Section 7.3]{BHJ}).   Let $(\mathcal X,\Omega)$ be a test-configuration for $(X,[\omega])$ with smooth total space.  Consider the map $u:\mathbb P^1\to \mathbb P^1$ given by $z\mapsto z^d$ and let $\mathcal X'$ be the normalisation of the pullback of $\mathcal X$ along $u$.  We denote the induced finite map $\mathcal X'\to \mathcal X$ also by $u$.  Setting $\Omega':= u^*\Omega$ we have that $(\mathcal X',\Omega')$ is a test-configuration for $(X,[\omega])$ which, for $d$ sufficiently large and divisible, has reduced central fibre \cite[Section 16]{Kollar-NX}.  Thus we need to compare the Donaldson-Futaki invariant and minimum norms of these two test-configurations.   

We start with the minimum norm.   Consider a resolution of indeterminancy
\begin{equation}\label{indet1}
\begin{tikzcd}
\Y \arrow[swap]{d}{q} \arrow{dr}{g} &  \\
X\times\pr^1 \arrow[dotted]{r}{f} & \X
\end{tikzcd}
\end{equation}

Let $\mathcal Y'$ be the normalisation of the pullback of $\mathcal Y$ along $u$, and denote the induced finite map also by $u:\mathcal Y'\to \mathcal Y$. Thus we have a diagram
 \begin{equation}
\begin{tikzcd}\label{indet2}
\Y' \arrow[swap]{d}{q'} \arrow{dr}{g'} &  \\
X\times\pr^1 \arrow[dotted]{r}{f'} & \X'
\end{tikzcd}
\end{equation}
such that the maps $u\colon X\times\mathbb P^1\to X\times\mathbb P^1$, $u:\mathcal X'\to \mathcal X$ and $u:\mathcal Y'\to \mathcal Y$ take \eqref{indet2} to \eqref{indet1} (that is, the obvious diagram commutes).     Then let $p:\mathcal Z'\to \mathcal Y'$ be a resolution, so $\mathcal Z'\to \mathcal X'$ is a resolution of indeterminancy for $X\times\mathbb P^1\dashrightarrow \mathcal X'$.  Using this to compute the minimum norms gives

\begin{align}\label{eq:minimumnormhom}
\|(\mathcal X',\Omega')\|_m &= [p^*g'^*\Omega'][p^*q'^* \Omega_1']^n - \frac{[p^*g'^* \Omega']^{n+1}}{n+1} \nonumber\\
 &= [g'^*\Omega'][q'^* \Omega_1']^n - \frac{[g'^* \Omega']^{n+1}}{n+1} \nonumber\\
&= [u^* g^* \Omega] [ q^* u^* \Omega_1]^n - \frac{[u^* g^* \Omega]^{n+1}}{n+1}\nonumber\\
&= d \|(\mathcal X,\Omega)\|_m.
\end{align}

We now turn to the Donaldson-Futaki invariant.  Using the pullback formula for the canonical bundle \cite[2.41.4]{Kollar} under finite maps (which still holds in the analytic setting \cite[Section 2.1, p38]{Kollar}), we have
$$ c_1(\mathcal Y') - p'^* \pi^* c_1(\mathbb P^1) = u^* (c_1(\mathcal Y) - p^*\pi^* c_1(\mathbb P^1)) + [R]$$
for some effective divisor $R$ in $\mathcal Y'$ (this calculation is exactly as in the projective case, for which we refer the reader to \cite[equation (4.6)]{BHJ}).  Using $[R].[g'^*\Omega']^n\ge 0$ gives
\begin{align*}
DF(\mathcal X',\Omega') &= DF(\mathcal Z', p^*g'^*\Omega) \\
&= \frac{n}{n+1} \mu(X,\omega) [p^*g'^*\Omega']^{n+1} - (c_1(\mathcal Z') - p^*q'^*\pi^* c_1(\mathbb P^1)) [g'^*\Omega']^n \\
&= \frac{n}{n+1} \mu(X,\omega) [g'^*\Omega']^{n+1} - (c_1(\mathcal Y') - q'^*\pi^* c_1(\mathbb P^1)) [g'^*\Omega']^n \\
&\le  \frac{n}{n+1} \mu(X,\omega) [u^*g^*\Omega]^{n+1} - (u^* c_1(\mathcal Y) -  u^* q^*\pi^* c_1(\mathbb P^1)).[u^*g^*\Omega]^n\\
&= d\left( \frac{n}{n+1} \mu(X,\omega) [g^*\Omega]^{n+1} - (c_1(\mathcal Y) -  p^*\pi^* c_1(\mathbb P^1).[g^*\Omega]^n\right)\\
&= d DF(\mathcal Y,g^*\Omega)\\
&= d DF(\mathcal X,\Omega)
\end{align*}
So in total we have
\begin{equation*}
 DF(\mathcal X',\Omega') \le d DF(\mathcal X,\Omega)
\end{equation*}
Together with \eqref{eq:minimumnormhom} this completes the proof.

\end{proof}
\color{black}

\begin{lemma}\label{normalisingtheDF} For all $c\in\R$ and all smooth $(1,1)$-forms $\eta\in c_1(\scO_{\pr^1}(1))$  we have $$\DF(\X,\co+c\pi^*\eta) = \DF(\X,\co).$$ 
and
$$ \|(\mathcal X,\co + c\pi^*\eta)\|_m = \|(\mathcal X,\co)\|_m.$$
\end{lemma}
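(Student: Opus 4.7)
The plan is to pass to a suitable resolution and exploit the fact that $\pi^*\eta$ pulls back from a one-dimensional base, so it squares to zero in cohomology. Everything will then reduce to a single linear-in-$c$ correction term that can be evaluated on a generic fibre.

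First, for the Donaldson-Futaki invariant, I would fix a resolution $p\colon\mathcal Y\to\mathcal X$ and set $\tilde\pi = \pi\circ p$, so that $p^*(\Omega + c\pi^*\eta) = p^*\Omega + c\tilde\pi^*\eta$. Since $[\tilde\pi^*\eta]^2 = 0$ in $H^*(\mathcal Y)$ (as $\tilde\pi$ factors through the curve $\mathbb P^1$), the binomial expansion collapses to
\[
[p^*\Omega + c\tilde\pi^*\eta]^{k} = [p^*\Omega]^k + k c [p^*\Omega]^{k-1}[\tilde\pi^*\eta].
\]
Substituting this into $DF(\mathcal Y,p^*\Omega + c\tilde\pi^*\eta)$ and comparing with $DF(\mathcal Y,p^*\Omega)$, the change is
\[
nc\,\mu(X,\omega)\,[p^*\Omega]^n[\tilde\pi^*\eta] - nc\,\bigl(c_1(\mathcal Y)-\tilde\pi^*c_1(\mathbb P^1)\bigr)\cdot [p^*\Omega]^{n-1}[\tilde\pi^*\eta].
\]

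Next I would compute these two intersection numbers by restricting to a general fibre $F\cong X$ of $\tilde\pi$. Since $[\tilde\pi^*\eta]$ is the class of $F$, the first intersection equals $\int_F (p^*\Omega|_F)^n = [\omega]^n$ using the test-configuration condition $[\Omega_t] = [\alpha_t^*\omega]$. For the second, the class $c_1(\mathcal Y) - \tilde\pi^*c_1(\mathbb P^1)$ restricts to $c_1(F) = c_1(X)$ on a smooth fibre (adjunction, with trivial normal bundle), so the intersection equals $c_1(X)\cdot[\omega]^{n-1}$. The change in $DF$ is therefore $nc(\mu(X,\omega)[\omega]^n - c_1(X)[\omega]^{n-1})$, which vanishes by the very definition of $\mu(X,\omega)$.

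Finally, for the minimum norm, I would carry out the identical manoeuvre on the resolution of indeterminacy $g\colon\mathcal Y\to\mathcal X$, $q\colon\mathcal Y\to X\times\mathbb P^1$, writing $g^*\pi^*\eta = q^*p_{\mathbb P^1}^*\eta$. Again only the linear term in $c$ survives the expansion of $[g^*\Omega + cg^*\pi^*\eta]^{n+1}$, and the difference $\|(\mathcal X,\Omega+c\pi^*\eta)\|_m - \|(\mathcal X,\Omega)\|_m$ reduces to
\[
c\bigl([q^*p_{\mathbb P^1}^*\eta]\cdot[q^*\omega]^n - [g^*\Omega]^n\cdot[g^*\pi^*\eta]\bigr).
\]
The first summand computes on $X\times\mathbb P^1$ via the projection formula to $(\int_{\mathbb P^1}\eta)[\omega]^n = [\omega]^n$, and the second computes on a fibre of $\tilde\pi$ to $[\omega]^n$, so the two terms cancel. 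The only points requiring a little care are the fibre-restriction identifications (especially that $c_1(\mathcal Y)-\tilde\pi^*c_1(\mathbb P^1)$ restricts to $c_1(X)$); these are standard but deserve a line or two of justification in the write-up.
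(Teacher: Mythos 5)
Your argument is correct and follows essentially the same route as the paper: expand the intersection numbers using $[\pi^*\eta]^2=0$, identify the linear corrections in $c$ as the fibre quantities $[\omega]^n$ and $c_1(X).[\omega]^{n-1}$, and cancel them via the definition of $\mu(X,\omega)$. The only (immaterial) difference is that the paper evaluates $[\Omega]^n.[\pi^*\eta]$ via Poincar\'e--Lelong and pushforward of currents, whereas you restrict cohomology classes to a general fibre; both give the same answer.
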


\begin{proof}Without loss of generality we may assume $\mathcal X$ is smooth.   We show $$[\co+c\pi^*\eta]^{n+1}=[\co]^{n+1}+c(n+1)[\Omega_t]^n,$$ and $$[\co+c\pi^*\eta]^n.[c_1(\X) - \pi^*c_1(\pr^1)]=[\co]^n.[c_1(\X)-\pi^*c_1(\pr^1)]+(cn)c_1(\X_t).[\co_t]^{n-1},$$for some (or equivalently all) $t$, which imply the result. 

Note that $\pi_*\co^{n}$ is smooth away from $0$, since $\pi$ is a submerssion on this locus \cite[Lemma 2.15]{JPD-book}. From Poincar\'e-Lelong it then follows that for $t\neq 0$ we have \begin{align*}\int_{\X} \co^{n}\wedge(\pi^*\eta) &= \int_{\X\backslash\X_0} \co^{n}\wedge(\pi^*\eta), \\ &= \int_{\pr^1\backslash 0} \pi_*(\co^{n})\wedge\eta, \\ &= \int_{\pr^1\backslash 0} \pi_*(\co^{n})\wedge \{t\}, \\ &= \int_{\X_t} \co_t^n.\end{align*} The push-pull formula implies $$[\co]^{n+1-i}.[\pi^*\eta]^i = 0$$ for all $i\geq 2$, which then gives the first required equation. 

The adjunction formula gives $c_1(\X).\X_t = c_1(\X_t),$ since the intersection of the fibre with itself is trivial. It follows that $$[\co+c\pi^*\eta]^n.(c_1(\X)-\pi^*c_1(\pr^1)) = [\co]^n.(c_1(\X)-\pi^*c_1(\pr^1)) + (cn)[\co_t]^{n-1}.c_1(\X_t),$$ as required.  The second statement is proved similarly.
\end{proof}

\section{Preliminaries on the Mabuchi functional}\label{sec:mabuchi-prelims}

Let $(X,\omega)$ be a compact K\"ahler manifold. The $\ddbar$-lemma implies any other K\"ahler metric in the K\"ahler class $[\omega]$ can be written as $\omega+i\ddbar \varphi$, for some $\varphi\in C^{\infty}(X,\R)$. 

\begin{definition}[Mabuchi functional]\cite{Mabuchi-functional} Fix a path $\varphi_t \in C^{\infty}(X,\R)$ in the space of K\"ahler potentials with $\varphi_0 = 0$ and $\varphi_1=\varphi$ and corresponding K\"ahler metrics $\omega_t$. We define the \emph{Mabuchi functional} to be $$\scM(\varphi) := -\int_0^1\int_X \dot\varphi_t (S(\omega_t)-n\mu(X,\omega))\omega_t^n,$$ where $S(\omega_t)$ denotes the scalar curvature of $\omega_t$. \end{definition}

As the notation suggests, the Mabuchi functional is independent of chosen path. The Mabuchi functional also admits an explicit formulation as follows, due to Chen \cite{Chen-IMRN} and Tian \cite[Section 7.2]{Tian-Book}. 

\begin{align*}
\scM(\varphi)& = \int_X \log\left(\frac{\omega_{\varphi}^n}{\omega^n}\right)\omega_{\varphi}^n + \frac{n}{n+1}\mu(X,[\omega]) \sum_{i=0}^n \int_X \varphi \omega^i\wedge\omega_{\varphi}^{n-i} \\
&- \sum_{i=0}^{n-1} \int_X \varphi \Ric\omega\wedge \omega^i\wedge\omega_{\varphi}^{n-1-i}.\end{align*}

We will also require some more functionals.

\begin{definition}[Aubin-Mabuchi, J and I functionals]\label{functionals-def} Let $(X,\omega)$ be a K\"ahler manifold, and let $\varphi$ be a K\"ahler potential. We define
\begin{enumerate}
\item the \emph{Aubin-Mabuchi energy} of $\varphi$ as $$\AM(\varphi) = \sum_{i=0}^n \int_X \varphi \omega^i\wedge\omega_{\varphi}^{n-i},$$

\item the $J$\emph{-functional} of $\varphi$ as $$J(\varphi) := \int_X \varphi \omega^n - \frac{\AM(\varphi)}{n+1},$$
\item the $I$\emph{-functional} of $\varphi$ as $$I(\varphi):=\int_X\varphi (\omega^n-\omega_{\varphi}^n).$$\end{enumerate}\end{definition}

We will later use the following standard properties of the $I$ and $J$-functionals.

\begin{lemma}\label{IJarenonneg}\cite[Lemma 6.19, Remark 6.20]{Tian-Book} The $I$ and $J$-functionals are non-negative. Moreover, $$\frac{1}{n}J(\varphi)\leq I(\varphi)-J(\varphi)\leq nJ(\varphi),$$\end{lemma}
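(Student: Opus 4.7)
The plan is to reduce both non-negativity and the two-sided inequality to a single family of manifestly non-negative quantities obtained by integrating the positive semidefinite $(1,1)$-form $i\partial\varphi\wedge\bar\partial\varphi$ against products of $\omega$ and $\omega_\varphi$.

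First I would set $a_i := \int_X \varphi\,\omega^i\wedge\omega_\varphi^{n-i}$ for $0\le i\le n$, so that $\AM(\varphi)=\sum_i a_i$, $I(\varphi)=a_n-a_0$, and $J(\varphi)=\frac{1}{n+1}\sum_i(a_n-a_i)$. Using $\omega_\varphi-\omega=i\ddbar\varphi$ together with integration by parts on the compact manifold $X$ (no boundary terms arise), the successive differences simplify to
$$a_{i+1}-a_i \;=\; \int_X i\partial\varphi\wedge\bar\partial\varphi\wedge\omega^i\wedge\omega_\varphi^{n-i-1}\;\ge\; 0,$$
so that $b_j := a_{j+1}-a_j\ge 0$ for each $0\le j\le n-1$.

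Next, telescoping $a_n-a_i=\sum_{k=i}^{n-1}b_k$ and counting the multiplicity with which each $b_j$ appears yields the clean re-expressions
$$I(\varphi)=\sum_{j=0}^{n-1}b_j,\qquad J(\varphi)=\frac{1}{n+1}\sum_{j=0}^{n-1}(j+1)\,b_j,\qquad I(\varphi)-J(\varphi)=\frac{1}{n+1}\sum_{j=0}^{n-1}(n-j)\,b_j,$$
from which the non-negativity of $I$ and $J$ is immediate. The two-sided inequality $\tfrac{1}{n}J\le I-J\le nJ$ then reduces to the elementary coefficient bounds $j+1\le n(n-j)$ and $n-j\le n(j+1)$, both valid for $0\le j\le n-1$, combined with $b_j\ge 0$.

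I do not anticipate a genuine obstacle here; the only mildly delicate points are keeping track of signs in the integration by parts and noting the positive semidefiniteness of $i\partial\varphi\wedge\bar\partial\varphi$, so that its wedge with a product of K\"ahler forms produces a non-negative top form on $X$.
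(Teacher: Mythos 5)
Your proof is correct: the paper itself gives no argument but simply cites \cite[Lemma 6.19, Remark 6.20]{Tian-Book}, and your computation is essentially the standard one found there — integration by parts to write the consecutive differences $a_{i+1}-a_i$ as integrals of $i\partial\varphi\wedge\bar\partial\varphi$ against products of K\"ahler forms, followed by a comparison of the coefficients $(j+1)$ and $(n-j)$. The telescoping identities and the boundary cases $j=0$ and $j=n-1$ (where the two inequalities become equalities) all check out.
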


\begin{definition}[Coercivity]\cite[Section 7.2]{Tian-Book} We say the Mabuchi functional is \emph{coercive} if $$\scM(\varphi) \geq \epsilon J(\varphi) + c$$ for some $\epsilon>0$ independent of $\varphi$ and some $c\in \R$.\end{definition}

Remark that one could equivalently use $I(\varphi)$ in the definition of coercivity, by Lemma \ref{IJarenonneg}. Moreover, by Lemma \ref{IJarenonneg}, coercivity of the Mabuchi functional implies it is bounded from below.   It will be useful to introduce a final functional.

\begin{definition}[$L_{\alpha}$-functional]\label{L-alpha} Fix an arbitrary smooth $(1,1)$-form $\alpha$ on $X$. We define  $$L_{\alpha}(\varphi) := \sum_{i=0}^n\int_X \varphi \alpha\wedge\omega^i\wedge\omega_{\varphi}^{n-1-i}.$$\end{definition}

One notes that the Mabuchi functional is then given as $$\scM(\varphi) = \int_X \log\left(\frac{\omega_{\varphi}^n}{\omega^n}\right)\omega_{\varphi}^n + \frac{n}{n+1}\mu(X,[\omega]) \AM(\varphi) - L_{\Ric\omega}(\varphi).$$

\section{The Mabuchi Functional and K-stability}\label{sec:semistability}

In this section we introduce certain currents on $\pr^1$, arising from a given test-configuration. These allow us to relate the analytic functionals (Mabuchi functional, J-functional, Aubin-Mabuchi energy) to the corresponding intersection numbers. The upshot will be the following (see Corollary \ref{cor:K-semistability} and Corollary \ref{cor:Kstable}).

\begin{theorem}\label{thm1:repeat}
Suppose the Mabuchi function for $[\omega]$ is bounded below (resp.\ coercive).  Then $(X,[\omega])$ is K-semistable (resp.\ uniformly K-stable).  
\end{theorem}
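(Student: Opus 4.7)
The plan is to associate to each test-configuration a smooth ray of K\"ahler potentials on $X$ and to identify the leading-order asymptotics along this ray of the Mabuchi and $J$-functionals with the Donaldson--Futaki invariant and minimum norm respectively. By Proposition \ref{prop:simplifiedtestconfig} we may assume the test-configuration $(\X,\co)$ has smooth total space, reduced central fibre, and that the natural bimeromorphic map $X\times\pr^1 \dashrightarrow \X$ is actually a morphism. Via the $\C^*$-equivariant biholomorphism $\alpha$ and the $S^1$-invariance of $\co$, pulling back $\co|_{\X_t}$ for $t\in\C^*$ produces a smooth family of K\"ahler forms $\omega_t=\omega+i\ddbar\varphi_t$ on $X$ in the class $[\omega]$ that depends only on $|t|$; setting $s:=-\log|t|$ gives a ray $s\mapsto\varphi_s$ defined for $s$ bounded below.

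The core analytic step is to show that each of the functionals from Section \ref{sec:mabuchi-prelims} grows linearly in $s$ as $s\to+\infty$, with leading coefficient a specific intersection number on $\X$. The main tool is the pushforward of closed $S^1$-invariant $(n+1,n+1)$-forms on $\X$ to closed currents on $\pr^1$: for closed $S^1$-invariant smooth $(1,1)$-forms $\eta_1,\ldots,\eta_{n+1}$ on $\X$, the pushforward $\pi_*(\eta_1\wedge\cdots\wedge\eta_{n+1})$ is a well-defined closed current on $\pr^1$ whose total mass equals the cohomological intersection $[\eta_1]\cdots[\eta_{n+1}]$, while its restriction to $\pr^1\setminus\{0,\infty\}$ is smooth and recovers the fibrewise integral over $\X_t$. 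Applied to the various products built from $\co$, a representative of $[\omega]$ pulled up to $\X$ using $f\colon X\times\pr^1\to\X$, and a smooth representative of $c_1(\X)-\pi^*c_1(\pr^1)$, combined with Fubini over $\pr^1$ and Lemma \ref{normalisingtheDF} to normalise, this machinery should give the asymptotic identities $\AM(\varphi_s)=s\,[\co]^{n+1}+O(1)$, $\int_X\varphi_s\,\omega^n = s\,[\co]^n\!\cdot\![g^*\co_1]+O(1)$, and a similar expression for $L_{\Ric\omega}(\varphi_s)$ minus the entropy. Assembling these via the Chen--Tian formula for $\scM$ then yields
\begin{align*}
\scM(\varphi_s) &= s\cdot\DF(\X,\co) + O(1),\\
J(\varphi_s) &= s\cdot\|(\X,\co)\|_m + O(1),
\end{align*}
as $s\to+\infty$.

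The theorem follows immediately from these asymptotic identities. If $\scM$ is bounded below by some constant $-C$, then $\scM(\varphi_s)/s \ge -C/s \to 0$, forcing $\DF(\X,\co)\ge 0$ for every test-configuration, so $(X,[\omega])$ is K-semistable. If instead $\scM\ge\epsilon J + c$ is coercive, the same limit applied to both slopes gives $\DF(\X,\co)\ge \epsilon\|(\X,\co)\|_m$ for every test-configuration, which is uniform K-stability.

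The most delicate step will be handling the entropy term $\int_X\log(\omega_{\varphi_s}^n/\omega^n)\,\omega_{\varphi_s}^n$. The polynomial functionals $\AM$, $I$, $J$ and $L_\alpha$ are manifestly intersection-theoretic, and their asymptotics follow from a fairly direct application of the pushforward-to-$\pr^1$ machinery; but the relative volume form $\omega_{\varphi_s}^n/\omega^n$ genuinely degenerates as $s\to+\infty$, so its integral does not literally convert into an intersection on $\X$. To extract the correct linear slope one must identify the logarithmic singularities of $\co^n$ along $\X_0$ with the relative canonical divisor $K_{\X/\pr^1}$ via a Poincar\'e--Lelong argument, after which the entropy combines with $L_{\Ric\omega}$ to produce the intersection $(c_1(\X)-\pi^*c_1(\pr^1))\cdot[\co]^n$. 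The reducedness of $\X_0$ (guaranteed by Proposition \ref{prop:simplifiedtestconfig}) is essential here: without it, extra multiplicity contributions from the components of the central fibre would appear and spoil this identification.
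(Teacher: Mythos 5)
Your proposal is correct and follows essentially the same route as the paper: reduce via Proposition \ref{prop:simplifiedtestconfig} to smooth total space with reduced central fibre, push the relevant $(n+1,n+1)$-forms on $\X$ down to $\pr^1$ to identify the slopes of $\AM$, $J$ and $\scM$ along the ray $\theta_\tau$ with $[\co]^{n+1}$, $\|(\X,\co)\|_m$ and $\DF(\X,\co)$, and read off (uniform) stability from boundedness (coercivity). Your discussion of the entropy term is exactly the content of the paper's comparison between the Mabuchi and Deligne currents via the relative Ricci form, where reducedness of $\X_0$ guarantees the correction term $\psi(t)$ is bounded; the only cosmetic difference is that you state the asymptotics as $s\cdot(\text{slope})+O(1)$ whereas the paper only establishes, and only needs, the limit of the derivative.
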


Our approach is closely related to arguments in the projective case, originally due to Tian \cite{Tian97} and built on by Paul-Tian \cite{Paul-Tian}, Phong-Ross-Sturm \cite{PRS} and Berman \cite{bermankpoly} (among others).

\begin{remark} \label{rmk:Dyrefelt} As mentioned in the introduction Sj\"ostr\"om Dyrefelt \cite{Dyrefelt} has also proved this using a slightly different method.  In fact \cite{Dyrefelt} gives more, namely an interpretation using intersection theoretic quantities of the limit derivative of the Mabuchi functional along a path given by a test-config\-uration with possibly non-reduced central fibre. We only consider test-configurations with smooth total space and reduced central fibre, which is sufficient for our purposes following Proposition \ref{prop:simplifiedtestconfig}.
\end{remark}

Our constructions will be of the following flavour. Let $(\X,\co)$ be a test-confi\-guration. Since the morphism $\pi: \X\to\pr^1$ is proper, given an arbitrary current on $\X$, we obtain a current on $\pr^1$ by taking the direct image. We apply this to the top degree forms appearing the the definition of the Donaldson-Futaki invariant and the minimum norm. The corresponding direct image currents will be smooth on $\C^*$, and hence will equal $i\ddbar$ of some smooth function. The goal of this section is to determine that function. 

To set this up, recall the $\mathbb C^*$-action on $\mathcal X$ gives for each $t\in \mathbb C^*$ a biholomorphism $\rho(t):\mathcal X\to \mathcal X$.  By abuse of notation we denote also by $\rho(t)$ its restriction $\rho(t): \X_1 \cong \X_t$.   By hypothesis each $\rho(t)$ preserves the cohomology class of $\Omega$.  We can then choose a smooth family $\zeta_t\in C^{\infty}(\mathcal X)$ such that
$$\rho(t)^* \Omega - \Omega = i\ddbar \zeta_t.$$
\begin{definition}(Normalisation of potentials)\label{def:normalisationofpotentials}
Let
$$\varphi_t:=\zeta_t|_{\X_1} $$
so
$$\rho(t)^*\co_t - \co_1  = i\ddbar \varphi_t.$$
\end{definition}
\color{black}
The key point is that $\rho(t)$ is not an isometry but $\rho(t)^*\Omega_t$ and $\Omega_1$ lie in the same cohomology class on $\mathcal X_1$. Now let $\mathcal X^{\times} = \pi^{-1}(\mathbb P^1\setminus \{0\})$ and continue to let $\pi\colon \mathcal X^{\times} \to \mathbb P^1\setminus \{0\}$ be the projection.   We recall the definition of a test-configuration assumes the existence of a $\mathbb C^*$-equivariant biholomorphism 
 \begin{equation}\label{eq:notationfortriv}
 \alpha: \mathcal X^\times \to X\times \mathbb P^1\setminus\{0\}
 \end{equation} 
 such that $\alpha|_{\X_t}^*[\omega] = [\Omega_t]$ for $t\in \mathbb P^1\setminus \{0\}$.    We then use $\alpha|_{\mathcal X_1}$ to identify $\mathcal X_1$ with $X$. 
 
\begin{definition} 
 Given any $p$-form $\zeta$ on $X$ we set
$$\rho(t)_*\zeta : = \alpha^* p_1^*\zeta$$
where $p_1:X\times \mathbb P^1\setminus \{0\} \to X$ is the projection.
\end{definition} 
Thus $\rho(t)_*\zeta$ is a $p$-form on $\mathcal X^{\times}$ obtained by moving the form $\zeta$ from the fibre $\mathcal X_1\simeq X$ to $\mathcal X_t$ using the biholomorphism $\rho(t):\mathcal X_1\simeq \mathcal X_t$. 
\color{black}

As a first step, we now consider the Aubin-Mabuchi energy. In the notation of Definition \ref{functionals-def}, $\co_1$ plays the role of $\omega$, with $\rho(t)^*\co_t$ playing the role of $\omega_{\varphi}$.

\begin{proposition}\label{AMasddbar}Let $(\X,\co)$ be a test-configuration for $(X,[\omega])$, and let $t$ be the usual coordinate on $\pr^1$. Then the pushforward $\pi_*\co^{n+1}$ is smooth away from $t=0$, and is given as $$\pi_*\co^{n+1} = i\ddbar \AM(\varphi_t).$$ Moreover it gives $\{0\}$ zero measure. \end{proposition}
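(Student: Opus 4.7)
My approach has three parts. For smoothness on $\mathbb{P}^1 \setminus \{0\}$, note that on $\mathcal{X}^\times := \pi^{-1}(\mathbb{P}^1\setminus\{0\})$ the map $\pi$ is a proper submersion: by flatness and the trivialisation $\alpha$, every fibre over $\mathbb{P}^1\setminus\{0\}$ is biholomorphic to $X$ and hence smooth. Fibre integration of smooth forms then produces smooth forms, so $\pi_*\Omega^{n+1}$ is smooth there. For the zero-measure statement at $t=0$, since $\mathcal{X}$ is smooth of complex dimension $n+1$, $\Omega^{n+1}$ is a smooth top-degree form and hence a smooth positive measure on $\mathcal{X}$; the mass $\pi_*\Omega^{n+1}$ assigns to $\{0\}$ equals $\int_{\mathcal{X}_0}\Omega^{n+1}|_{\mathcal{X}_0}$, which vanishes because $\mathcal{X}_0$ has real dimension $2n$ while $\Omega^{n+1}$ has form-degree $2n+2$.

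For the identification $\pi_*\Omega^{n+1} = i\ddbar\AM(\varphi_t)$, I would transport to the product: set $\tilde\Omega := (\alpha^{-1})^*\Omega$ on $X \times (\mathbb{P}^1\setminus\{0\})$, where the $\mathbb{C}^*$-equivariance together with Definition \ref{def:normalisationofpotentials} gives $\tilde\Omega|_{X\times\{t\}} = \omega_{\varphi_t}$. Over any disc $U \subset \mathbb{P}^1\setminus\{0\}$, the class $[\tilde\Omega - p_X^*\omega]$ in $H^{1,1}(X\times U)$ vanishes (its K\"unneth components restrict to zero), so the $\ddbar$-lemma on the product provides a real potential $\Phi \in C^\infty(X\times U)$ with $\tilde\Omega = p_X^*\omega + i\ddbar\Phi$, and we may arrange $\Phi(\cdot,t) = \varphi_t$ after absorbing the pluriharmonic ambiguity, which here is exactly a harmonic function of $t$ alone. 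Because $\omega$ is pulled back from the $n$-dimensional $X$ we have $(p_X^*\omega)^{n+1}=0$; exploiting closedness of $p_X^*\omega + s\,i\ddbar\Phi$, differentiating $(p_X^*\omega + s\,i\ddbar\Phi)^{n+1}$ in $s$ and integrating over $s\in[0,1]$ then yields
\[
\tilde\Omega^{n+1} \;=\; (n+1)\, i\ddbar \int_0^1 \Phi\,(p_X^*\omega + s\,i\ddbar\Phi)^n\, ds.
\]
Since $p\colon X\times U \to U$ is a proper submersion, fibre integration commutes with $i\ddbar$, and only the pure $X$-direction component of $(p_X^*\omega + s\,i\ddbar\Phi)^n$ survives, giving $(\omega + s\,i\ddbar\varphi_t)^n$ at level $t$. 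A beta-function computation then collapses the $s$-integral to $\AM(\varphi_t)/(n+1)$, producing $p_*\tilde\Omega^{n+1} = i\ddbar\AM(\varphi_t)$ on $U$ and hence on all of $\mathbb{P}^1\setminus\{0\}$.

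The main obstacle is the middle step: arranging the potential $\Phi$ on $X \times U$ so that its fibrewise restriction is exactly the $\varphi_t$ of Definition \ref{def:normalisationofpotentials}, and legitimising $p_* i\ddbar = i\ddbar p_*$ in the analytic setting. Once both are in hand, the $s$-integration identity, which uses $(p_X^*\omega)^{n+1}=0$ crucially, reduces the whole calculation to the first-variation formula for the Aubin-Mabuchi functional and the standard beta-function identity for $\AM$.
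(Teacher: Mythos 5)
Your argument is correct and rests on the same two pillars as the paper's: the vanishing of the top power of the form pulled back from $X$, and the fact that $(x,t)\mapsto\varphi_t(x)$ is an honest $i\ddbar$-potential on the product $X\times(\pr^1\setminus\{0\})$ for $\tilde\Omega-p_X^*\co_1$. The packaging, however, differs. The paper never introduces a global potential $\Phi$ or the homotopy formula; instead it pairs $\pi_*\co^{n+1}$ with a test function $f$ on $U\subset\C^*$, integrates by parts twice to move $i\ddbar$ from $f$ onto the function $\rho(t)_*\varphi_t$ on the total space, and then applies the telescoping identity $(\co-\rho(t)_*\co_1)\wedge\sum_{i}(\rho(t)_*\co_1)^i\wedge\co^{n-i}=\co^{n+1}-(\rho(t)_*\co_1)^{n+1}$ together with $\co_1^{n+1}=0$ directly to the finite sum defining $\AM$; this is the exact algebraic counterpart of your $s$-integration plus hockey-stick computation. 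What your route buys is that it makes explicit the one genuinely delicate point: the paper's step $\rho(t)_*(i\ddbar\varphi_t)=\co-\rho(t)_*\co_1$, where $i\ddbar$ is taken on the total space, is precisely the assertion that the family $\zeta_t$ of Definition \ref{def:normalisationofpotentials} can be chosen so that your $\Phi$ has no extra mixed $dt$-components — you correctly identify this as the crux, and it is used silently in the paper as well (a fibrewise potential is a priori only determined up to a $t$-dependent constant, which $i\ddbar_t$ does not kill). What the paper's route buys is that it avoids the K\"unneth/$\ddbar$-lemma discussion on the non-compact product and works distributionally from the start, which sets up the subsequent Mabuchi-current computation in Proposition \ref{MabuchiMetricFunctional}. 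Two harmless discrepancies: the reference metric should be $\co_1$ rather than $\omega$ (the paper only assumes $[\co_1]=[\omega]$; the cocycle property of $\AM$ shows the difference is a $t$-independent constant), and the zero-mass claim should not invoke smoothness of $\X$, since the proposition allows singular total space — it suffices that $\co^{n+1}$ is locally the restriction of a smooth ambient form while $\X_0$ has dimension $n$.
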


\color{black}

\begin{proof}
As the test-configuration $\X$ is equivariantly isomorphic to $\X\times\C$ away from $0\in\pr^1$, the morphism $\pi$ is a submersion on this locus. In particular, it is smooth on this locus by properties of the the direct image \cite[Section 2.15]{JPD-book}. 

The mass given by $\pi_*\co^{n+1}$ to $\{0\}$ is calculated as \begin{align*}\int_{\{0\}} \pi_*\co^{n+1} &= \int_{\X_0} \co^{n+1}, \\ &=0\end{align*}
which holds as $\Omega$ is smooth over the total space $\mathcal X$.

We wish to show $$\pi_*\co^{n+1}= i\ddbar \int_{X}\varphi_t\left(\sum^n_{i=0} \co_1^i \wedge \rho(t)^*\co_t^{n-i}\right).$$ Pushing forward by $\rho(t)$ gives $$\int_{X}\varphi_t\left(\sum \co_1^i \wedge \rho(t)^*\co_t^{n-i}\right) = \int_{\X_t}(\rho(t)_*\varphi_t)\left(\sum^n_{i=0} (\rho(t)_*\co_1^i )\wedge \co_t^{n-i}\right).$$

With $f$ a test function (in particular, of compact support) and $U \subset\C^*$ open, we have

$$\int_{U}f \pi_*\co^{n+1} = \int_{\pi^{-1}(U)}\pi^*f\co^{n+1}.$$ On the other hand, we have 

\begin{align*}\int_U f i\ddbar &\int_{X}\varphi_t\left(\sum^n_{i=0} \co_1^i \wedge \rho(t)^*\co_t^{n-i}\right) =  \int_U (i\ddbar f)  \int_{X}\varphi_t\left(\sum^n_{i=0} \co_1^i \wedge \rho(t)^*\co_t^{n-i}\right), \\ &= \int_U (i\ddbar f)\int_{\X_t}(\rho(t)_*\varphi_t)\left(\sum^n_{i=0} (\rho(t)_*\co_1^i) \wedge \co_t^{n-i}\right),  \\ &= \int_{\pi^{-1}(U)} (\rho(t)_*\varphi_t)(i\ddbar \pi^*f)\wedge \left(\sum^n_{i=0} (\rho(t)_*\co_1^i) \wedge \co^{n-i}\right), \\ &= \int_{\pi^{-1}(U)} \pi^*f (\rho(t)_*(i\ddbar \varphi_t))\wedge\left(\sum^n_{i=0} (\rho(t)_*\co_1^i) \wedge \co^{n-i}\right).\end{align*} 

Remark that $\rho(t)_*(i\ddbar \varphi_t) =\co - \rho(t)_*\co_1$, while $$(\co - \rho(t)_*\co_1)\wedge\left(\sum_{i=0}^n (\rho(t)_*\co_1^i)\wedge \co^{n-i}\right) = \co^{n+1} - \rho(t)_*\co_1^{n+1} .$$ This gives the result since $\co_1^{n+1}=0$.
\end{proof}

Before considering the more complicated functionals, we will need a relative form of the Ricci curvature.    To define this, suppose that $\tau$ is an $(n,n)$-form on a complex manifold $X$ of dimension $n$, and that $\tau$ is not identically zero.  Then $\tau$ is a section of $K_X\otimes \overline{K}_X$ which we may then think of as a (singular) metric $h_{\tau}$ on $-K_X$.

The curvature of this metric is a current, which by standard abuse of notation we denote by
$$-i\ddbar \log \tau:=-i\ddbar \log h_{\tau} \in c_1(-K_X).$$
When $\tau = \omega^n$ for some K\"ahler metric $\omega$ then $\tau$ is a volume form, $h_{\tau}$ is smooth, and its curvature is just the Ricci form of $\omega$,
$$\Ric(\omega) = -i\ddbar  \log h_{\omega^n} = -i\ddbar \log \omega^n\in - c_1(K_X).$$

We require a relative version of this construction.  Suppose $\pi:\mathcal X\to \mathbb P^1$ is a holomorphic map with $\mathcal X$ smooth of dimension $n+1$, and let $\omega_{FS}$ be the Fubini-Study metric on $\mathbb P^1$.    We suppose that for all $t\neq 0$ the fibre $\mathcal X_t = \pi^{-1}(t)$ is smooth of dimension $n$.

\begin{definition}
Given a semi-positive $(1,1)$-form $\Omega$ on $\mathcal X$  let
$$\Rel(\Omega) := -i\ddbar \log(\Omega^n\wedge \pi^*\omega_{FS})-2\pi^*\omega_{FS}.$$
\end{definition}
In this definition we implicitly assume that $\Omega^n\wedge\pi^*\omega_{FS}$ is not identically zero, which will always be the case below.   Thus the cohomology class of $[\Rel(\Omega)]$ is 
$$[\Rel(\Omega)] = -c_1(K_{\mathcal X}) + c_1(K_{\mathbb P^1}) = -c_1(K_{\mathcal X/\mathbb P^1}).$$ 

\begin{lemma}\label{lem:ricmove}
Over $\mathcal X^\times$ we have
$$\Rel(\rho(t)_*\Omega_1)= \rho(t)_*\Ric(\Omega_1)$$
\end{lemma}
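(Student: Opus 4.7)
The plan is to work entirely on $\X^{\times}$, where the biholomorphism $\alpha:\X^\times\to X\times(\pr^1\setminus\{0\})$ from \eqref{eq:notationfortriv} turns the problem into a product computation. First I would observe that by definition $\rho(t)_*\Omega_1=\alpha^*p_1^*\Omega_1$, and since $\pi|_{\X^\times}=p_2\circ\alpha$, also $\pi^*\omega_{FS}=\alpha^*p_2^*\omega_{FS}$. Combining these gives
$$(\rho(t)_*\Omega_1)^n\wedge\pi^*\omega_{FS}\;=\;\alpha^*\bigl(p_1^*\Omega_1^n\wedge p_2^*\omega_{FS}\bigr),$$
so the calculation reduces to computing $-i\ddbar\log$ of the product volume form $p_1^*\Omega_1^n\wedge p_2^*\omega_{FS}$ on $X\times(\pr^1\setminus\{0\})$ and pulling back.

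Next I would do this computation in local coordinates $(z_1,\dots,z_n,t)$: writing $\Omega_1^n=\sigma(z)\,(idz_1\wedge d\bar z_1)\wedge\cdots\wedge(idz_n\wedge d\bar z_n)$ and $\omega_{FS}=\tau(t)\,idt\wedge d\bar t$, the product has coefficient $\sigma(z)\tau(t)$ whose logarithm splits as $\log\sigma(z)+\log\tau(t)$. Applying $-i\ddbar$ therefore yields
$$-i\ddbar\log\bigl(p_1^*\Omega_1^n\wedge p_2^*\omega_{FS}\bigr)\;=\;p_1^*\Ric(\Omega_1)+p_2^*\Ric(\omega_{FS}).$$
Invariantly, this just reflects the fact that a product Hermitian metric on $-K_{X\times(\pr^1\setminus\{0\})}\cong p_1^*(-K_X)\otimes p_2^*(-K_{\pr^1})$ has curvature equal to the sum of the curvatures of its factors.

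Finally, I would use the K\"ahler--Einstein identity $\Ric(\omega_{FS})=2\omega_{FS}$ on $\pr^1$, pull everything back by $\alpha^*$, and use $\alpha^*p_2^*\omega_{FS}=\pi^*\omega_{FS}$ together with the definition $\rho(t)_*\Ric(\Omega_1)=\alpha^*p_1^*\Ric(\Omega_1)$. This produces
$$-i\ddbar\log\bigl((\rho(t)_*\Omega_1)^n\wedge\pi^*\omega_{FS}\bigr)\;=\;\rho(t)_*\Ric(\Omega_1)+2\pi^*\omega_{FS},$$
and subtracting $2\pi^*\omega_{FS}$ as in the definition of $\Rel$ gives the lemma. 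This is essentially a one-step local calculation, and I do not anticipate any real obstacle: the only conceptual point is recognising that the $-2\pi^*\omega_{FS}$ correction built into the definition of $\Rel$ is precisely there to absorb the contribution of the base to the curvature of the relative volume form, which is why the identity holds on the nose rather than just in cohomology.
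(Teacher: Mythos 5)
Your proposal is correct and follows essentially the same route as the paper's proof: both reduce to the identity $(\rho(t)_*\Omega_1)^n\wedge\pi^*\omega_{FS}=\alpha^*\bigl(p_1^*\Omega_1^n\wedge p_2^*\omega_{FS}\bigr)$ via $p_2\circ\alpha=\pi$, then use that the curvature of the product volume form splits as a sum of the curvatures of the factors together with $\Ric(\omega_{FS})=2\omega_{FS}$. The only cosmetic difference is that the paper expresses the splitting in terms of the induced Hermitian metrics on the anticanonical bundle, whereas you verify it by a local coordinate computation.
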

\begin{proof}
Recall $h_{\tau}$ denotes the hermitian metric on the anticanonical bundle induced by a top-degree form $\tau$ and $\alpha:\mathcal X^\times \simeq X\times \mathbb P^1\setminus\{0\}$ is the biholomorphism from \eqref{eq:notationfortriv}.  We claim that under the isomorphism
$$p_1^*K_X\otimes \pi^* K_{\mathbb P^1\setminus \{0\}}= K_{X\times \mathbb P^1\setminus\{0\}} \stackrel{\alpha^*}{\simeq}K_{\mathcal X^\times}$$
we have
\begin{equation}
 h_{\rho(t)_*\Omega_1^n\wedge \pi^*\omega_{FS}} = \alpha^*(p_1^* h_{\Omega_1^n} \otimes p_{2}^* h_{\omega_{FS}})\label{eq:ricmove}
\end{equation}
where $p_{2}\colon X\times \mathbb P^1\setminus \{0\}\to \mathbb P^1\setminus \{0\}$ is the projection.   But this is clear since $p_{2}\circ\alpha =\pi$ so
$$\rho(t)_*\Omega_1^n \wedge \pi^*\omega_{FS} =( \alpha^* p_1^*\Omega_1^n)\wedge \pi^*\omega_{FS} = \alpha^*(p_1^*\Omega_1^n \wedge p_{2}^*\omega_{FS}).$$
Then taking the curvature of \eqref{eq:ricmove} gives
$$ \Rel(\rho(t)_*\Omega_1) + 2\pi^*\omega_{FS} = \alpha^*p_1^* \Ric(\Omega_1) + \pi^* \Ric(\omega_{FS}) = \rho(t)_*\Ric(\Omega_1) + 2\pi^*\omega_{FS}$$
since  $\Ric(\omega_{FS}) = 2\omega_{FS}$.
\end{proof}
\color{black}

\begin{definition}[Mabuchi current]Given a smooth test-configuration $(\X,\co)$ for $(X,[\omega])$, the \emph{Mabuchi current} on $\pr^1$ is defined to be $$\eta(\co) = \pi_*\left(\frac{n}{n+1}\mu(X,[\omega])\co^{n+1}-\Rel\co\wedge\co^n\right).$$ \end{definition}

The construction of the Deligne current instead uses the usual Ricci curvature, with the addition of a relative term coming from $\pr^1$. 

\begin{definition}[Deligne current] Given a smooth test-configuration $(\mathcal X,\Omega)$ for $(X,[\omega])$ we define the \emph{Deligne current} to be $$\hat{\eta}(\co) = \pi_*\left(\frac{n}{n+1}\mu(X,[\omega])\co^{n+1}-(\Ric\co - 2\pi^*\omega_{FS})\wedge\co^n\right).$$ \end{definition}

We name this the Deligne current by analogy with the Deligne pairing (as used, for example, in \cite{PRS}).  It is defined so that
 $$\DF(\X,\co) = \int_{\pr^1} \hat{\eta}(\co).$$

\color{black}

Just as with the Aubin-Mabuchi energy, we have the following.

\begin{proposition}\label{MabuchiMetricFunctional} Let $(\mathcal X,\Omega)$ be a smooth test-configuration. Then the Mabuchi current is smooth on $\pr^1 \backslash \{0\}$. Away from $t=0$, we have $\eta(\co) = i\ddbar \scM(\varphi_t)$. \end{proposition}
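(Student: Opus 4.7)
The plan is to mirror the structure of the proof of Proposition \ref{AMasddbar}, splitting the Mabuchi current into three pieces that correspond term by term to the Chen--Tian explicit formula
\[
\scM(\varphi_t)=\int_X\log\!\left(\frac{\omega_{\varphi_t}^n}{\omega^n}\right)\omega_{\varphi_t}^n+\frac{n}{n+1}\mu(X,[\omega])\AM(\varphi_t)-L_{\Ric\omega}(\varphi_t).
\]
Smoothness of $\eta(\Omega)$ away from $0$ is immediate from the fact that $\pi$ is a proper submersion on $\pi^{-1}(\pr^1\setminus\{0\})$, so the direct image of any smooth form is smooth there (cf.\ the opening of the proof of Proposition \ref{AMasddbar}).

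The Aubin--Mabuchi term is already handled: Proposition \ref{AMasddbar} gives $\pi_*\Omega^{n+1}=i\ddbar\AM(\varphi_t)$, so it remains to analyse $\pi_*(\Rel(\Omega)\wedge\Omega^n)$. The key tool is Lemma \ref{lem:ricmove}, which asserts $\Rel(\rho(t)_*\Omega_1)=\rho(t)_*\Ric(\Omega_1)$ on $\mathcal X^{\times}$. Comparing the definition of $\Rel$ applied to $\Omega$ and to $\rho(t)_*\Omega_1$, I would write
\[
\Rel(\Omega)=\rho(t)_*\Ric(\Omega_1)-i\ddbar\psi,\qquad \psi:=\log\frac{\Omega^n\wedge\pi^*\omega_{FS}}{(\rho(t)_*\Omega_1)^n\wedge\pi^*\omega_{FS}},
\]
a well-defined smooth function on $\mathcal X^{\times}$. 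This splits $\pi_*(\Rel(\Omega)\wedge\Omega^n)$ into a ``Ricci piece'' and an ``entropy piece''.

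For the Ricci piece I would repeat verbatim the computation in Proposition \ref{AMasddbar}, with $\Ric\omega$ inserted as an extra factor. Using $\rho(t)_*(i\ddbar\varphi_t)=\Omega-\rho(t)_*\Omega_1$ and the telescoping identity
\[
(\Omega-\rho(t)_*\Omega_1)\wedge\sum_{i=0}^{n-1}(\rho(t)_*\Omega_1)^i\wedge\Omega^{n-1-i}=\Omega^n-(\rho(t)_*\Omega_1)^n,
\]
together with the dimension cancellation $\pi_*(\rho(t)_*(\Ric\omega\wedge\omega^n))=0$, one arrives at $\pi_*(\rho(t)_*\Ric(\Omega_1)\wedge\Omega^n)=i\ddbar L_{\Ric\omega}(\varphi_t)$.

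For the entropy piece I would use that $d$ commutes with the proper pushforward and that $\Omega^n$ is closed, giving $\pi_*(i\ddbar\psi\wedge\Omega^n)=i\ddbar\pi_*(\psi\,\Omega^n)$. To identify $\pi_*(\psi\,\Omega^n)$ at a point $t$ I pull back by $\rho(t):\mathcal X_1\to\mathcal X_t$ and use the equivariance of the trivialisation $\alpha$ in \eqref{eq:notationfortriv}: under the identification $\mathcal X_1\cong X$, $\rho(t)^*\Omega_t$ becomes $\omega_{\varphi_t}$ while $\rho(t)^*(\rho(t)_*\Omega_1)|_{\mathcal X_t}$ becomes $\omega$, so $\rho(t)^*\psi|_{\mathcal X_t}=\log(\omega_{\varphi_t}^n/\omega^n)$ and hence $\pi_*(\psi\,\Omega^n)|_t=\int_X\log(\omega_{\varphi_t}^n/\omega^n)\,\omega_{\varphi_t}^n$. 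Combining the three pieces with Proposition \ref{AMasddbar} and the Chen--Tian formula yields $\eta(\Omega)=i\ddbar\scM(\varphi_t)$.

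The main obstacle I expect is the bookkeeping around the entropy piece: one must be careful that $\psi$ is genuinely a smooth function (not a $(0,0)$-current with log singularities) on $\mathcal X^{\times}$, which uses that both $\Omega^n\wedge\pi^*\omega_{FS}$ and $(\rho(t)_*\Omega_1)^n\wedge\pi^*\omega_{FS}$ are nowhere-vanishing smooth volume forms on the locus where $\pi$ is a submersion, and that the $\mathbb C^*$-equivariance of $\alpha$ is used correctly to identify the fibrewise ratio with $\omega_{\varphi_t}^n/\omega^n$.
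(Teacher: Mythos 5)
Your proposal is correct and follows essentially the same route as the paper: the splitting of $\Rel(\Omega)$ via Lemma \ref{lem:ricmove} into $\rho(t)_*\Ric(\Omega_1)$ minus $i\ddbar$ of the log-ratio is exactly the paper's decomposition into the terms $A(t)$ and $B(t)$, handled by the same telescoping identity and the same integration by parts against test functions. The fibrewise identification of the ratio with $\omega_{\varphi_t}^n/\omega^n$ and the dimension-count vanishing of the pure $\Omega_1$-terms also match the paper's argument.
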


\begin{proof} As the direct image of a smooth form, the Mabuchi current is smooth on $\pr^1 \backslash \{0\}$.

Since we have already considered the Aubin-Mabuchi term, we only need to consider the term $\Rel\co\wedge\co^n$. We consider the terms $$A(t)=-i\ddbar \int_X \varphi_t \left(\Ric\co_1\wedge\left(\sum_{i=0}^{n-1} \co_1^i\wedge\rho(t)^*\co_t^{n-1-i}\right)\right)$$ and $$B(t)=i\ddbar \int_X \log\left(\frac{\rho(t)^*\co_t^n}{\co_1^n}\right)\rho(t)^*\co_t^n$$ arising from the Mabuchi functional separately. We have to show $$-\int_U f\pi_*(\Rel\co\wedge\co^n) = \int_U f (A(t)+B(t))$$
where $U\subset \mathbb C^*$ is open and $f$ is a compactly supported test function on $U$. 

For the $A(t)$ term, arguing as with the Aubin-Mabuchi energy, we wish to calculate $\int_U f A(t)$ which equals 
$$-\int_{\pi^{-1}(U)}(\rho(t)_*\varphi_t)(i\ddbar (\pi^*f))\wedge \left((\rho(t)_*\Ric\co_1) \wedge\left(\sum_{i=0}^{n-1}(\rho(t)_*\co_1^i)\wedge\co^{n-1-i}\right)\right).$$ Using again the same argument as with the Aubin-Mabuchi functional, we see $$\int_U fA(t) = -\int_{\pi^{-1}(U)}(\pi^*f)((\rho(t)_*\Ric\co_1) \wedge(\co^{n}-\rho(t)_*\co_1^n)).$$ The term involving only $\co_1$ and $\Ric(\co_1)$ vanishes, leaving 

\begin{equation}
\int_U f A(t) = -\int_{\pi^{-1}(U)}(\pi^*f)((\rho(t)_*\Ric\co_1) \wedge\co^{n})\label{eq:At}
\end{equation}

For the $B(t)$ term, we have
\begin{align*}
\int_U f B(t) &=\int_U (i\ddbar f)\int_{X} \log\left(\frac{\rho(t)^*\Omega_t^n}{\Omega_1^n} \right)\Omega_t^n \\
&= \int_U (i\ddbar f)\int_{\mathcal X_t}  \log\left(\frac{\Omega_t^n}{\rho(t)_*\Omega_1^n }\right) \Omega_t^n \\
&= \int_{\pi^{-1}(U)} \log\left(\frac{\Omega^n\wedge \pi^* \omega_{FS}}{\rho(t)_*\Omega_1^n \wedge \pi^*\omega_{FS}}\right)(i\ddbar \pi^* f)\wedge \Omega^n \\
&=  \int_{\pi^{-1}(U)}\pi^* f \left(i\ddbar \log\left(\frac{\Omega^n\wedge \pi^* \omega_{FS}}{\rho(t)_*\Omega_1^n \wedge \pi^*\omega_{FS}}\right)\right)\wedge \Omega^n 
\end{align*}
Now using Lemma \ref{lem:ricmove}
\begin{align*}
-i\ddbar \log\left(\frac{\Omega^n\wedge \pi^* \omega_{FS}}{\rho(t)_*\Omega_1^n \wedge \pi^*\omega_{FS}}\right) &= \Rel(\Omega) - \Rel(\rho(t)_*\Omega_1)\\
& = \Rel(\Omega) - \rho(t)_*\Ric(\Omega_1).
\end{align*}

So combining with \eqref{eq:At} gives
$$\int_U f (A(t) + B(t)) = -\int_U f \pi_*(\Rel(\Omega)\wedge \Omega^n))$$
as required.
\color{black}

\end{proof}

While the Mabuchi current is related to the Mabuchi functional, the object we are more interested in is the Donaldson-Futaki invariant, which is related to the Deligne current. We relate the two objects as follows.

\begin{lemma}\label{MabuchivsDeligne}  Assume $(\mathcal X,\Omega)$ is a smooth test-configuration.  Then the Deligne current and the Mabuchi current are related  by $$\hat{\eta}(\co) - \eta(\co) = -i\ddbar \int_{\X_t} \log\left(\frac{\pi^*\omega_{FS}\wedge\co^n}{\co^{n+1}}\right)\co_t^n.$$ This equality is global, and in particular holds in a neighbourhood of $0\in\pr^1$.\end{lemma}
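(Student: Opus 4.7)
The plan is to unwind the definitions of $\Rel$ and $\Ric$ so that the lemma reduces to an $i\ddbar$-identity on $\X$, then convert the pushforward of an $i\ddbar$-exact form to the $i\ddbar$ of a fibre integral by integration by parts, following the pattern of Propositions \ref{AMasddbar} and \ref{MabuchiMetricFunctional}. First I would compute the two curvature terms side by side. Since $\Ric(\co) = -i\ddbar \log\co^{n+1}$ and $\Rel(\co)=-i\ddbar\log(\co^n\wedge\pi^*\omega_{FS})-2\pi^*\omega_{FS}$, direct subtraction yields
$$(\Ric\co - 2\pi^*\omega_{FS}) - \Rel\co = i\ddbar F, \qquad F:= \log\!\left(\frac{\pi^*\omega_{FS}\wedge\co^n}{\co^{n+1}}\right).$$
The Aubin-Mabuchi contributions in $\hat\eta(\co)$ and $\eta(\co)$ are identical, so after cancellation
$$\hat\eta(\co) - \eta(\co) = -\pi_*(i\ddbar F \wedge \co^n),$$
and it suffices to show $\pi_*(i\ddbar F\wedge\co^n) = i\ddbar g(t)$ as currents on $\pr^1$, where $g(t):=\int_{\X_t}F\co_t^n$.

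Over $\C^*$ this is a direct consequence of integration by parts. Pairing the left-hand side with a test function $f$ supported in an open $U\subset\C^*$, pulling back to $\pi^{-1}(U)$, and using that $\co^n$ is closed, one moves $i\ddbar$ from $F$ onto $\pi^*f$; then $i\ddbar\pi^*f = \pi^*i\ddbar f$ and the push-pull/Fubini formula rewrites the pairing as $\int_U (i\ddbar f) g(t)$, which one last integration by parts on $\pr^1$ identifies with $\int_U f\cdot i\ddbar g(t)$. This is the very same argument used in the proofs of Proposition \ref{AMasddbar} and Proposition \ref{MabuchiMetricFunctional}.

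Finally, to promote the identity across $t=0$ I would argue that both sides are well-defined currents on all of $\pr^1$. Since $\co$ is K\"ahler on the smooth total space $\X$, the form $\co^{n+1}$ is strictly positive and $\pi^*\omega_{FS}\wedge\co^n$ is smooth and semipositive, vanishing only along the critical locus of $\pi$ (which sits over $0$); hence $F$ has at worst logarithmic singularities, $F\co^n$ is locally integrable, and $\pi_*(i\ddbar F\wedge\co^n)$ is a well-defined current on $\pr^1$. A parallel estimate shows $g$ is $L^1_{\rm loc}$ near $0$, so $i\ddbar g(t)$ is also a current on $\pr^1$. The difference is then a current supported at $\{0\}$, and a mass computation in the spirit of Proposition \ref{AMasddbar} shows it assigns no mass there, forcing the identity globally. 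The main obstacle I anticipate is the careful justification of the integration by parts in the presence of the logarithmic singularities of $F$ along the critical locus of $\pi$: one must verify, via a standard regularisation argument, that boundary contributions vanish and that the fibre integral $g(t)$ is indeed controllable as $t\to 0$.
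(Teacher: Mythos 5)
Your proposal is correct and follows essentially the same route as the paper: subtract the defining expressions for $\Ric\co-2\pi^*\omega_{FS}$ and $\Rel\co$ to exhibit their difference as $i\ddbar$ of the log-ratio, cancel the common volume terms, and identify $\pi_*(i\ddbar F\wedge\co^n)$ with $i\ddbar\int_{\X_t}F\,\co_t^n$ by the same integration-by-parts argument as in Propositions \ref{AMasddbar} and \ref{MabuchiMetricFunctional}. Your extra care about the behaviour across $t=0$ (local integrability of $F$, which is of the form $\log|\mathrm{hol}|^2$ plus smooth, and the matching of the two currents' mass at the origin) is a point the paper passes over with ``one readily sees,'' so it is a welcome, not a divergent, addition.
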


\begin{proof} 
\begin{align*}
-\Ric(\Omega) + 2\pi^*\omega_{FS} + \Rel(\Omega) &= i\ddbar\log \Omega^{n+1} -i\ddbar\log(\Omega^n\wedge \pi^*\omega_{FS})\\
&= i\ddbar\log \left(\frac{\Omega^{n+1}}{\Omega^n\wedge \pi^*\omega_{FS}}\right)
\end{align*}
Thus
$$ \hat{\eta}(\co) - \eta(\co) = \pi_*\left(i\ddbar\log\left( \frac{\co^{n+1}}{\pi^*\omega_{FS}\wedge\co^n}\right)\wedge\co^n\right).$$
One readily sees that $$\int_U f i\ddbar \int_{\X_t} \log\left(\frac{\co^{n+1}}{\pi^*\omega_{FS}\wedge\co^n}\right)\co_t^n = \int_U f \pi_*\left(\left(i\ddbar\log \frac{\co^{n+1}}{\pi^*\omega_{FS}\wedge\co^n}\right)\wedge\co^n\right)$$  where $U\subset\pr^1$ is an arbitrary open subset.
\end{proof}

\begin{remark}\label{psi-considerations} The previous lemma is closely analogous to a result is proven in \cite{PRS}. As noted in \cite{PRS}, the function $$f_t = \frac{\pi^*\omega_{FS}\wedge\co^n}{\co^{n+1}}$$ is the ratio of two top degree forms whose denominator is strictly positive, and so $f_t$ is a non-negative smooth function on $\X_t$  (it is important here that $\X$ is smooth, so that $\co$ is K\"ahler; this would not be true on a resolution of singularities $p:\Y\to\X$ where $p^*\co$ is merely semi-positive). It follows that the integral $$\psi(t)= \int_{\X_t} \log\left(\frac{\pi^*\omega_{FS}\wedge\co^n}{\co^{n+1}}\right)\co_t^n$$ is smooth and bounded above as $t\to 0$. Moreover, as in \cite{PRS}, it is also continuous and  bounded below provided the central fibre $\X_0$ is reduced. With this notation, the previous lemma proves that $$\hat{\eta}(\co) - \eta(\co) = -i\ddbar \psi(t).$$
 \end{remark}

Finally, we consider the minimum norm. We assume $(\X,\co)$ admits a map $q: \X\to X\times\pr^1$, working on a resolution of indeterminacy if not. As in Remark \ref{minimum-norm-using-co_1}, we fix an isomorphism $X\cong\X_1$, and consider $\co_1$ to be a K\"ahler metric on $X$. We also denote $$\scN(\co) = \pi_*\left ((\co\wedge q^*(\co_1)^n) - \frac{\co^{n+1}}{n+1}\right).$$

\begin{proposition}\label{J-functional-prelim} Away from $t=0$, we have $\scN(\co)= i\ddbar J(\varphi_t)$.\end{proposition}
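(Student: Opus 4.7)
My plan is to adapt the argument of Proposition \ref{AMasddbar} with minimal modifications. By the definition of the $J$-functional we have $J(\varphi_t) = \int_X \varphi_t\,\co_1^n - \tfrac{1}{n+1}\AM(\varphi_t)$, and Proposition \ref{AMasddbar} already tells us that $\frac{1}{n+1}\pi_*\co^{n+1} = \frac{1}{n+1}i\ddbar \AM(\varphi_t)$ on $\pr^1\setminus\{0\}$. Thus the problem reduces to establishing the single identity
$$
\pi_*(\co\wedge q^*\co_1^n) \;=\; i\ddbar \int_X \varphi_t\,\co_1^n
$$
away from $t=0$, after which subtraction gives the claim.

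To prove this identity I would follow the test-function recipe used for the Aubin--Mabuchi energy. Fix an open $U\subset \C^*$ and a compactly supported test function $f$ on $U$. The first step is to observe that on $\X^\times$ the form $q^*\co_1$ agrees with $\rho(t)_*\co_1$, because $q|_{\X^\times}$ coincides, under the identification $X\simeq \X_1$, with the canonical trivialisation $\alpha$ of \eqref{eq:notationfortriv}. Consequently
$$
\int_X \varphi_t\,\co_1^n \;=\; \int_{\X_t} (\rho(t)_*\varphi_t)(\rho(t)_*\co_1^n),
$$
and the same integration-by-parts manipulation as in Proposition \ref{AMasddbar}---moving $i\ddbar$ across from $\pi^*f$ using that $\rho(t)_*\co_1^n$ is closed---transforms $\int_U f\cdot i\ddbar\!\int_X\varphi_t\,\co_1^n$ into
$$
\int_{\pi^{-1}(U)} \pi^*f\cdot \rho(t)_*(i\ddbar \varphi_t)\wedge \rho(t)_*\co_1^n.
$$
Substituting $\rho(t)_*(i\ddbar\varphi_t) = \co - \rho(t)_*\co_1$ and discarding the term $\pi^*f\cdot\rho(t)_*\co_1^{n+1}$, which vanishes because $\co_1^{n+1}=0$ on the $n$-dimensional fibre $X$, collapses the right hand side to $\int_U f\cdot\pi_*(\co\wedge q^*\co_1^n)$, as required.

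I do not expect any substantive obstacle. The only point demanding attention is the identification $q^*\co_1 = \rho(t)_*\co_1$ on $\X^\times$; this carries over without change to the setting where one works instead with a resolution of indeterminacy $\Y \to X\times \pr^1$, since such a resolution is an isomorphism away from the central fibre and is precisely the setting implicit in Lemma \ref{canworkonres}. In all other respects the proof is essentially a line-by-line transcription of that of Proposition \ref{AMasddbar}, with the K\"ahler form $\co_t$ on the generic fibre replaced at the appropriate places by the fixed $\co_1$ pulled back along $q$.
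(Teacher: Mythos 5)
Your proposal is correct and follows exactly the paper's own route: reduce to the identity $\pi_*(\co\wedge q^*\co_1^n)=i\ddbar\int_X\varphi_t\,\co_1^n$ via the already-established Aubin--Mabuchi case, identify $q^*\co_1$ with $\rho(t)_*\co_1$ on $\X^\times$, and rerun the test-function/integration-by-parts computation of Proposition \ref{AMasddbar}. The paper states this tersely ("the proof then follows exactly as in the previous cases"); your write-up simply fills in those details faithfully.
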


\begin{proof}
The metric $q^*\co_1$ satisfies the property that $(q^*\co_1)|_{\X_t} = \rho(t)_*\co_1$. Here $q^*\co_1$ denotes the $(1,1)$ form on $\X$, and $(q^*\co_1)|_{\X_t}$ is its restriction to $\X_t$.

Since we have already considered the Aubin-Mabuchi energy, it suffices to show $$\pi_*(\co\wedge \rho(t)_*\co_1) = i\ddbar \left(\int_X\varphi_t\co_1^n\right).$$ The proof then follows exactly as in the previous cases.

\end{proof}

\subsection{A Kempf-Ness lemma}

We now relate the numerical invariants, such as the Donaldson-Futaki invariant, to the limit derivatives of the corresponding functionals along certain paths of metrics. The simplest case, which we deal with first, is the Aubin-Mabuchi functional. So far we have used $t$ as the coordinate; to state our results it will be more convenient to use $\tau = -\log |t|^2$. In this way, $\tau\to\infty$ corresponds to $t\to 0$, and $t=e^{-\tau/2}$. It will also be useful to use the notation $\theta_{\tau} = \varphi_{t}$.

\begin{theorem}\label{VolumeIsAM} Let $(\X,\co)$ be a test-configuration. Then $$[\co]^{n+1} = \lim_{\tau \to \infty}\frac{d}{d\tau} \AM(\theta_{\tau}).$$\end{theorem}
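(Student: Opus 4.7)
The plan is to combine Proposition \ref{AMasddbar} with Stokes's theorem on $\pr^1$.  By that proposition, $\pi_*\co^{n+1} = i\ddbar \AM(\varphi_t)$ as currents on $\C^*$, with zero mass at $\{0\}$; integrating this identity over $\pr^1$ gives $[\co]^{n+1}$ on one side, while on the other side Stokes will convert the right-hand side into a boundary expression involving $F'(\tau) = \frac{d}{d\tau}\AM(\theta_\tau)$, whose limit at $\tau = \infty$ is exactly the quantity of interest.

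Two preliminary observations are needed.  First, $\AM(\varphi_t)$ is a function only of $\tau = -\log|t|^2$: since $\co$ is $S^1$-invariant, $\rho(\lambda t)^*\co = \rho(t)^*\co$ for $\lambda \in S^1$, so the potentials $\zeta_t$ of Definition \ref{def:normalisationofpotentials} can be chosen $S^1$-invariant in $t$, which makes $F(\tau) := \AM(\theta_\tau)$ well defined.  Second, $F$ extends smoothly to $\tau = -\infty$ with $F'(-\infty) = 0$: near $\infty \in \pr^1$ the test-configuration is biholomorphic to $X \times U$ via $\alpha$, so $\co$ becomes a smooth $S^1$-invariant form there and the potentials $\varphi_t$ depend smoothly on $t \in U$.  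Using the local coordinate $s = 1/t$ at $\infty$ and imposing $S^1$-invariance, only monomials of the form $|s|^{2k} = e^{k\tau}$ survive in the Taylor expansion, giving $F(\tau) = A_0 + A_1 e^\tau + A_2 e^{2\tau} + \cdots$ as $\tau \to -\infty$ and hence $F'(-\infty) = 0$.

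With these in hand, the final step is to apply Stokes to $\int_{\{|t|\ge \epsilon\}} i\ddbar F$ on the disk $\{|t| \ge \epsilon\} \subset \pr^1$ containing $\infty$.  By the smoothness of $F$ at $\infty$ there is no boundary contribution there, and the integral equals a boundary integral over the circle $\{|t| = \epsilon\}$ of $i\bar\partial F$, which in polar coordinates $t = re^{i\theta}$ evaluates to a multiple of $F'(-2\log \epsilon)$.  On the other hand, as $\epsilon \to 0$, $\int_{\{|t|\ge\epsilon\}} \pi_*\co^{n+1} \to [\co]^{n+1}$ since $\{0\}$ carries zero mass by Proposition \ref{AMasddbar}.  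Matching the two expressions (the precise normalisation being fixed by the paper's conventions for $i\ddbar$ and intersection numbers) yields $[\co]^{n+1} = \lim_{\tau \to \infty} F'(\tau)$, as required.  The delicate ingredient is the vanishing $F'(-\infty) = 0$, which is what ensures that only the $|t| = \epsilon$ boundary term appears in the Stokes calculation.
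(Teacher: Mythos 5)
Your proposal is correct and follows essentially the same route as the paper: Proposition \ref{AMasddbar} plus the zero mass at $\{0\}$, then a Stokes/boundary-term computation in the variable $\tau=-\log|t|^2$ (the paper cites \cite[Lemma 2.6]{bermankpoly} for this step), with the boundary contribution at $t=\infty$ killed because the family is a smooth product there via $\alpha$, so $\frac{d}{d\tau}\AM(\theta_\tau)\to 0$ as $\tau\to-\infty$. Your more explicit justification of this last vanishing (via the expansion in $|s|^2=e^\tau$ near $s=1/t=0$) is a fleshed-out version of the paper's one-line remark that the action is trivial at infinity.
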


\begin{proof}

Using Proposition \ref{AMasddbar}, we have \begin{align*}[\co]^{n+1} &= \int_{\pr^1} \pi_*(\co^{n+1}), \\ &= \int_{\C = \pr^1 \backslash \{0\}} \pi_*(\co^{n+1}), \\ &= \int_{\C} i\ddbar \AM(\varphi_{t}). \end{align*}

The function $\AM(\varphi)$ is $S^1$-invariant, since $\co$ is. For clarity we write $\nu(t) = \AM(\varphi_{t})$. The subsequent argument follows \cite[Lemma 2.6]{bermankpoly}: with these coordinates, we have \begin{align*}\int_{\C} i\ddbar \AM(\varphi_{t}) &= \int_{-\infty}^{\infty} d \left(\frac{d\nu(e^{-\tau/ 2})}{d\tau}\right), \\ &= \left(\lim_{\tau\to -\infty}\frac{d}{d\tau} \nu(e^{-\tau/ 2})\right) + \left(\lim_{\tau\to\infty}\frac{d}{d\tau} \nu(e^{-\tau/ 2})\right).
\end{align*}To conclude we note that $$\lim_{\tau \to -\infty}\frac{d}{d\tau} \AM(\theta_{\tau}) = 0,$$ since the $\C^*$-action on $\X$ is trivial at infinity and hence $\theta_{\tau}$ and so  $\AM(\theta_{\tau})$ tend to a constant.

\end{proof}

We next relate the Donaldson-Futaki invariant to the Mabuchi functional. For this argument, we require that $\X_0$ is reduced, which by Remark \ref{psi-considerations} will give detailed information about the function 
$$\psi(t)= \int_{\X_t} \log\left(\frac{\pi^*\omega_{FS}\wedge\co^n}{\co^{n+1}}\right)\co_t^n.$$ 

\begin{theorem}\label{DFisslopeofMabuchi}Let $(\X,\co)$ be a smooth test-configuration with reduced central fibre. Then $$\DF(\X,\co) = \lim_{\tau\to\infty} \frac{d}{d\tau} \scM(\theta_{\tau}).$$
\end{theorem}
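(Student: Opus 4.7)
The plan is to mirror the argument of Theorem \ref{VolumeIsAM}: express $\DF(\X,\co)$ as the integral over $\pr^1$ of a $(1,1)$-current which, away from $t=0$, equals $i\ddbar$ of an explicit $S^1$-invariant function of $t$, and then read off the answer as a boundary term in the coordinate $\tau=-\log|t|^2$.

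First I would start from $\DF(\X,\co)=\int_{\pr^1}\hat\eta(\co)$. Since $\X$ is smooth and $\co$ is a K\"ahler form, $\Ric(\co)$ is smooth on $\X$ and so $\hat\eta(\co)$ is the pushforward under $\pi$ of a globally smooth form on $\X$; exactly as in Proposition \ref{AMasddbar}, this forces $\hat\eta(\co)$ to give zero mass to $\{0\}\in\pr^1$, hence
$$\DF(\X,\co)=\int_{\pr^1\setminus\{0\}}\hat\eta(\co).$$
Combining Lemma \ref{MabuchivsDeligne} with Proposition \ref{MabuchiMetricFunctional} yields, on $\pr^1\setminus\{0\}$,
$$\hat\eta(\co)=\eta(\co)-i\ddbar\psi(t)=i\ddbar\bigl(\scM(\varphi_t)-\psi(t)\bigr).$$

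Next I would set $H(\tau):=\scM(\theta_\tau)-\psi\bigl(e^{-\tau/2}\bigr)$, which is $S^1$-invariant and smooth on $\C^{*}=\pr^1\setminus\{0\}$, and repeat the Stokes-type computation from the proof of Theorem \ref{VolumeIsAM}. Passing to the variable $\tau=-\log|t|^2$ and integrating out the angular direction converts the integral of $i\ddbar H$ over $\C^{*}$ into a pair of boundary terms, giving
$$\DF(\X,\co)=\lim_{\tau\to\infty}\frac{dH}{d\tau}-\lim_{\tau\to-\infty}\frac{dH}{d\tau}.$$
The boundary term at $\tau\to-\infty$ vanishes because the $\C^{*}$-action on $\X$ is trivial near $t=\infty$: both $\theta_\tau$ and $\psi(e^{-\tau/2})$ stabilise to constants there, and so their derivatives vanish.

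It then remains to establish that $\lim_{\tau\to\infty}\tfrac{d}{d\tau}\psi\bigl(e^{-\tau/2}\bigr)=0$, and this is the main obstacle of the proof. This step is precisely where the assumption that the central fibre $\X_0$ is reduced enters essentially: by Remark \ref{psi-considerations}, reducedness ensures that $\psi$ extends to a continuous function on all of $\pr^1$ (without it one only has the one-sided bound that $\psi$ is bounded above). Upgrading continuity at $t=0$ to the vanishing of the $\tau$-derivative of $\psi(e^{-\tau/2})$ as $\tau\to\infty$ requires the finer asymptotic analysis of the integrand $\log\bigl(\pi^{*}\omega_{FS}\wedge\co^n/\co^{n+1}\bigr)$ near the central fibre carried out by Phong-Ross-Sturm \cite{PRS}. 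Granting that technical input, the theorem follows from the previous step, since then $\lim_{\tau\to\infty}dH/d\tau=\lim_{\tau\to\infty}(d/d\tau)\scM(\theta_\tau)$.
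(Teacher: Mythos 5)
Your argument is correct and is essentially the paper's proof: the same decomposition $\hat\eta(\co)=\eta(\co)-i\ddbar\psi(t)$ via Lemma \ref{MabuchivsDeligne} and Proposition \ref{MabuchiMetricFunctional}, the same observation that the Deligne current puts no mass at $\{0\}$, the same Stokes computation in $\tau$ with the vanishing boundary term at $\tau\to-\infty$, and the same use of reducedness of $\X_0$ to get two-sided control of $\psi$. The only (harmless) deviation is at the last step: the paper does not invoke the finer Phong--Ross--Sturm asymptotics, but simply notes that $\lim_{\tau\to\infty}\frac{d}{d\tau}\psi(e^{-\tau/2})$ exists (since $\int_{\pr^1\setminus\{0\}}i\ddbar\psi$ converges) and must therefore vanish because $\psi$ is bounded.
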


\begin{proof}

By definition of the Deligne current we have \begin{align*}\DF(\X,\co)&= \int_{\pr^1}\hat{\eta}(\co), \\ &= 
\int_{\pr^1}(\eta(\co)+ (\hat{\eta}(\co) - \eta(\co))).\end{align*}

We consider each term separately, dealing first with the $\eta(\co)$ term. By smoothness, the mass given by the Deligne current to $\{0\}$ is zero, just as with $\pi_*\co^{n+1}$ in Proposition \ref{AMasddbar}. Using Theorem \ref{MabuchiMetricFunctional}, we have \begin{align*}\int_{\pr^1}\eta(\co) &= \int_{\C = \pr^1\backslash \{0\}}\eta(\co), \\ &= \int_{\C} i\ddbar \scM(\varphi_t).\end{align*} 

Denoting temporarily $\nu(t) = \scM(\varphi_{t})$, using the same argument as \cite[Lemma 2.6]{bermankpoly} again, we have \begin{align*}\int_{\C}i\ddbar (\scM(\varphi_{t})) &= \int_{-\infty}^{\infty} d\left(\frac{d\nu(e^{-\tau/2})}{d\tau}\right), \\ &= \lim_{\tau \to\infty} \frac{d\nu(\tau)}{d\tau}, \\ &=  \lim_{\tau \to\infty} \frac{d\scM(\theta_{\tau})}{d\tau}.\end{align*}  Here the limit as $\tau\to -\infty$ vanishes for the same reason as in Theorem \ref{VolumeIsAM}.

We now calculate $$\int_{\pr^1}(\hat{\eta}(\co) - \eta(\co)) = -\int_{\pr^1}i\ddbar\psi(t).$$ Since $\hat{\eta}(\co)$ and $\eta(\co)$ are well defined as currents, so is $i\ddbar\psi(t)$. As $\X_0$ is reduced, by Remark \ref{psi-considerations} $\psi(t)$ is continuous, hence bounded, and smooth away from $t=0$. It follows that \begin{align*}\int_{\pr^1}i\ddbar\psi(t) &= \int_{\pr^1\backslash \{0\}}i\ddbar\psi(t), \\ &= -\lim_{\tau \to\infty} \frac{d\psi(e^{-\tau/2})}{d\tau}.\end{align*} This derivative vanishes as  $\psi(t)$ is bounded, completing the proof.

\end{proof}

\begin{corollary}\label{cor:K-semistability} Suppose the Mabuchi functional for $[\omega]$ is bounded.  Then $(X,\omega)$ is K-semistable.  In particular this holds if $[\omega]$ admits a cscK metric. \end{corollary}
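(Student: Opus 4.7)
The plan is to combine the three main ingredients already available: the reduction step of Proposition \ref{prop:simplifiedtestconfig}, the slope formula Theorem \ref{DFisslopeofMabuchi}, and an elementary mean-value observation about functions that are bounded below.

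First I would reduce to the convenient class of test-configurations. By Proposition \ref{prop:simplifiedtestconfig}, it suffices to verify that $\DF(\X,\co)\ge 0$ on test-configurations $(\X,\co)$ whose total space $\X$ is smooth and whose central fibre $\X_0$ is reduced. Fix such a test-configuration. Using the identification $\X_1\simeq X$ from the trivialisation $\alpha$, the path $\theta_\tau:=\varphi_{t}$ (with $\tau=-\log|t|^2$) produced by Definition \ref{def:normalisationofpotentials} is a smooth family of K\"ahler potentials on $X$, so the Mabuchi functional $\scM(\theta_\tau)$ is well-defined for all $\tau\in \R$, and by hypothesis bounded below by some constant $C\in\R$ independent of $\tau$.

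Next, I would apply Theorem \ref{DFisslopeofMabuchi}, which precisely because $\X$ is smooth and $\X_0$ is reduced gives
\begin{equation*}
\DF(\X,\co)=\lim_{\tau\to\infty}\frac{d}{d\tau}\scM(\theta_\tau).
\end{equation*}
Now suppose for contradiction that $\DF(\X,\co)<0$. Then there exists $\tau_0$ such that $\frac{d}{d\tau}\scM(\theta_\tau)\le \tfrac{1}{2}\DF(\X,\co)<0$ for all $\tau\ge \tau_0$. Integrating gives
\begin{equation*}
\scM(\theta_\tau)\le \scM(\theta_{\tau_0})+\tfrac{1}{2}\DF(\X,\co)(\tau-\tau_0)\longrightarrow -\infty
\end{equation*}
as $\tau\to\infty$, contradicting $\scM(\theta_\tau)\ge C$. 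Hence $\DF(\X,\co)\ge 0$, proving $(X,[\omega])$ is K-semistable.

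The final ``in particular'' claim follows immediately by invoking the Berman--Berndtsson theorem \cite{BB}, which says that the existence of a cscK metric in $[\omega]$ implies that the Mabuchi functional is bounded below. There is essentially no hard step in this corollary; the substance is packaged into Theorem \ref{DFisslopeofMabuchi} and Proposition \ref{prop:simplifiedtestconfig}. If anything is delicate, it is only verifying that the slope formula applies to the reduced, smooth test-configurations obtained from the reduction step — but that is exactly what was arranged.
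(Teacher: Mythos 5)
Your proposal is correct and follows essentially the same route as the paper: reduce via Proposition \ref{prop:simplifiedtestconfig} to smooth total space with reduced central fibre, apply Theorem \ref{DFisslopeofMabuchi}, and conclude from boundedness below that the limit slope is non-negative (you merely spell out the elementary integration step the paper leaves implicit), with the cscK claim handled identically via Berman--Berndtsson.
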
 

\begin{proof} By Proposition \ref{prop:simplifiedtestconfig} we need only consider test-configurations for $(X,[\omega])$ with smooth total space and reduced central fibre.  Then boundedness of the Mabuchi functional implies
$$\lim_{\tau\to\infty}   \frac{d\scM(\theta_{\tau})}{d\tau} \geq 0$$
which from the above gives K-semistability.  The second statement follows from Berman-Berndtssson \cite{BB} who prove the existence of a cscK metric in $[\omega]$ implies the Mabuchi functional is bounded.
\end{proof}

We move on to consider the case the Mabuchi functional is coercive, for which we require a similar result regarding the J-functional. We remark that the following result applies to general, singular test-configurations.

\begin{theorem}\label{MinNormAsSlope} Let $(\X,\co)$ be a test-configuration. Then $$\|(\X,\scL)\|_m = \lim_{\tau\to\infty} \frac{d}{d\tau} J(\theta_{\tau}).$$ 
\end{theorem}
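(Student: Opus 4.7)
The plan is to mirror the argument of Theorem \ref{VolumeIsAM} (and the $\eta(\co)$-part of Theorem \ref{DFisslopeofMabuchi}) using Proposition \ref{J-functional-prelim} as the key local identification. Explicitly, I would show that $\|(\X,\co)\|_m$ equals the total mass $\int_{\pr^1}\scN(\co)$, that $\scN(\co)$ places no mass at $\{0\}$, and that on $\C^*$ the identity $\scN(\co)=i\ddbar J(\varphi_t)$ from Proposition \ref{J-functional-prelim} together with the polar-coordinate computation of \cite[Lemma 2.6]{bermankpoly} yields the limit derivative. A pleasant feature is that, in contrast to the Donaldson–Futaki case, there is no discrepancy term analogous to $\psi(t)$ to control; hence no reducedness or smoothness hypothesis on the central fibre is needed.

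First I would reduce to the situation where the natural bimeromorphic map $f\colon X\times\pr^1\dashrightarrow\X$ has an inverse $q\colon \X\to X\times\pr^1$ defined on all of $\X$. If this is not the case, I pass to a resolution of indeterminacy
\[
\begin{tikzcd}
\Y \arrow[swap]{d}{q} \arrow{dr}{g} &  \\
X\times\pr^1 \arrow[dotted]{r}{f} & \X
\end{tikzcd}
\]
and work with the semi-positive form $g^*\co$ on $\Y$; by Lemma \ref{canworkonres} this does not change $\|(\X,\co)\|_m$, and since $g$ is an isomorphism over $\pr^1\setminus\{0\}$ the potentials $\varphi_t$ (and hence $J(\varphi_t)$) are unchanged there, so Proposition \ref{J-functional-prelim} still applies. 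Thus I may assume there is a global $q:\X\to X\times\pr^1$ and write
\[
\|(\X,\co)\|_m=[\co]\cdot[q^*\co_1]^n-\frac{[\co]^{n+1}}{n+1}=\int_{\pr^1}\scN(\co).
\]

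Next I would verify that $\scN(\co)$ gives zero mass to $\{0\}$. Both $\co\wedge q^*\co_1^n$ and $\co^{n+1}$ are smooth $(n+1,n+1)$-forms on $\X$, so their restrictions to the $n$-dimensional fibre $\X_0$ vanish for bidegree reasons (exactly as for $\co^{n+1}$ in Proposition \ref{AMasddbar}). Consequently
\[
\int_{\pr^1}\scN(\co)=\int_{\C}\scN(\co)=\int_{\C}i\ddbar J(\varphi_t)
\]
using Proposition \ref{J-functional-prelim} for the last equality.

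Finally, I would introduce $\tau=-\log|t|^2$ and $\theta_\tau=\varphi_t$ and run the $S^1$-invariance/polar-coordinate calculation of \cite[Lemma 2.6]{bermankpoly}, already invoked in Theorem \ref{VolumeIsAM}, to obtain
\[
\int_{\C}i\ddbar J(\varphi_t)=\lim_{\tau\to\infty}\frac{d}{d\tau}J(\theta_\tau)+\lim_{\tau\to-\infty}\frac{d}{d\tau}J(\theta_\tau).
\]
The $\tau\to-\infty$ limit vanishes because the $\C^*$-action is trivial at $\infty\in\pr^1$, hence $\theta_\tau$, and therefore $J(\theta_\tau)$, tends to a constant there. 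This produces the stated equality. The only genuine subtlety—and the main (mild) obstacle—is checking that Proposition \ref{J-functional-prelim} survives the passage to the resolution $\Y$, but this is immediate since both $q$ and $g$ are biholomorphisms on the preimage of $\pr^1\setminus\{0\}$, so the construction and identification of $\scN$ are insensitive to the modification over $0\in\pr^1$.
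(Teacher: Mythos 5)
Your proposal is correct and follows essentially the same route as the paper: the paper's proof likewise reduces to the argument for the Aubin--Mabuchi functional via Proposition \ref{J-functional-prelim}, checks that the direct image current $\scN(\co)$ gives zero mass to $\{0\}$ because the relevant $(n+1,n+1)$-forms integrate to zero over the $n$-dimensional fibre $\X_0$, and passes to a resolution in the singular case. Your write-up simply spells out the polar-coordinate step and the behaviour at $\tau\to-\infty$ in more detail than the paper does.
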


\begin{proof} 

The proof is the same as for the Aubin-Mabuchi functional, using Proposition \ref{J-functional-prelim}.  For this we use that the direct image current $\pi_*\left ((\co\wedge q^*(\co_1)^n) - \frac{\co^{n+1}}{n+1}\right)$ gives zero mass to $\{0\}$ as the mass equals the integral $$\int_{\X_0} \left((\co\wedge q^*(\co_1)^n) - \frac{\co^{n+1}}{n+1}\right) = 0,$$ using the same notation as Proposition \ref{J-functional-prelim}. If the test-configuration is singular, we pass to a resolution of singularities and run the same argument.
\end{proof}

The following is then an immediate corollary.

\begin{corollary} \label{cor:Kstable} Suppose $(X,[\omega])$ has coercive Mabuchi functional. Then $(X,[\omega])$ is uniformly K-stable.    In particular this holds if the automorphism group of $(X,[\omega])$ is discrete, and $[\omega]$ admits a K\"ahler metric that has constant scalar curvature.   \end{corollary}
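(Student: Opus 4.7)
The plan is to combine the coercivity inequality directly with the two limit-derivative formulas established in Theorems \ref{DFisslopeofMabuchi} and \ref{MinNormAsSlope}. By Proposition \ref{prop:simplifiedtestconfig} it suffices to prove the uniform K-stability inequality on test-configurations $(\mathcal X,\Omega)$ with smooth total space and reduced central fibre, which is precisely the setting in which Theorem \ref{DFisslopeofMabuchi} applies. Fix such a test-configuration and the associated path $\theta_\tau$ of K\"ahler potentials from Definition \ref{def:normalisationofpotentials} (with $\tau = -\log|t|^2$). By coercivity there exist $\epsilon>0$ and $c\in\mathbb{R}$, independent of the test-configuration, such that
\begin{equation*}
\scM(\theta_\tau) - \epsilon J(\theta_\tau) \geq c \qquad \text{for all } \tau.
\end{equation*}
Set $g(\tau) := \scM(\theta_\tau) - \epsilon J(\theta_\tau) - c \geq 0$.

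The next step is to pass to the limit of derivatives. By Theorem \ref{DFisslopeofMabuchi} and Theorem \ref{MinNormAsSlope} the limits
\begin{equation*}
\lim_{\tau\to\infty}\frac{d}{d\tau}\scM(\theta_\tau) = \DF(\mathcal X,\Omega), \qquad \lim_{\tau\to\infty}\frac{d}{d\tau}J(\theta_\tau) = \|(\mathcal X,\Omega)\|_m
\end{equation*}
exist, so $\lim_{\tau\to\infty} g'(\tau) = \DF(\mathcal X,\Omega) - \epsilon\|(\mathcal X,\Omega)\|_m$ exists as well. A nonnegative function on $\mathbb{R}$ whose derivative has a limit at $+\infty$ cannot have strictly negative limit derivative: if $\lim g'(\tau) = -\delta < 0$ then for $\tau$ sufficiently large $g'(\tau) \leq -\delta/2$, forcing $g(\tau) \to -\infty$ and contradicting $g\geq 0$. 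Hence $\DF(\mathcal X,\Omega) \geq \epsilon\|(\mathcal X,\Omega)\|_m$, which is exactly the uniform K-stability inequality.

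For the ``in particular'' clause, when $\Aut(X,[\omega])$ is discrete and a cscK metric exists in $[\omega]$, the Mabuchi functional is coercive by Darvas-Rubinstein \cite{Dar-Rub} and Berman-Darvas-Lu \cite{BermanDarvasLu} (as recalled in the introduction), and the first part of the corollary then applies. The main technical input is thus the existence and characterisation of the limit derivatives in Theorems \ref{DFisslopeofMabuchi} and \ref{MinNormAsSlope}, together with the reduction in Proposition \ref{prop:simplifiedtestconfig} which ensures $\scM(\theta_\tau)$ is differentiable in $\tau$ with a finite asymptotic slope; the remaining deduction is purely elementary, and I expect no obstacle beyond verifying that the elementary ``nonnegative functions have nonnegative asymptotic slope'' argument does not need more regularity than is already available.
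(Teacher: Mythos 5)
Your proposal is correct and follows essentially the same route as the paper: reduce via Proposition \ref{prop:simplifiedtestconfig} to smooth test-configurations with reduced central fibre, apply coercivity to $\scM - \epsilon J$, and use Theorems \ref{DFisslopeofMabuchi} and \ref{MinNormAsSlope} to identify $\DF(\X,\co) - \epsilon\|(\X,\co)\|_m$ as the limit derivative of a functional bounded below, hence non-negative. The only difference is that you spell out the elementary calculus step (a bounded-below function whose derivative converges must have non-negative asymptotic slope) which the paper leaves implicit.
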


\begin{proof} In this case the functional $\scM(\varphi) - \epsilon J(\varphi)$ is bounded below, so in particular its limit derivative along a sequence of K\"ahler potentials must be non-negative. But, for an arbitrary test-configuration (assuming the total space is smooth and the central fibre is reduced, which we may do by Proposition \ref{prop:simplifiedtestconfig}), we have \begin{align*}\DF(\X,\co) - \epsilon \|(\X,\co)\|_m &= \lim_{\tau\to\infty} \frac{d}{d\tau} (\scM(\theta_{\tau}) - \epsilon J(\theta_{\tau})), \\ &\geq 0.\end{align*} It follows that $\DF(\X,\co) \geq \epsilon \|(\X,\co)\|_m,$ i.e. $(X,[\omega])$ is uniformly K-stable.  The second statement then follows from the main result of Berman-Darvas-Lu \cite{BermanDarvasLu}.
\end{proof}

Another corollary is the following, which proves Proposition \ref{MinimumNormIsNonnegative}. 

\begin{proposition}\label{min-norm-is-nonneg-proof} The minimum norm of a test-configuration is non-negative. \end{proposition}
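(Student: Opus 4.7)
The plan is to deduce non-negativity of the minimum norm directly from the identification with the asymptotic slope of the $J$-functional already established in Theorem \ref{MinNormAsSlope}, combined with the a priori non-negativity of $J$ from Lemma \ref{IJarenonneg}. No new intersection-theoretic estimate (such as mixed Hodge index or Khovanskii--Teissier) is required; instead, the whole argument is reduced to a one-variable real analysis observation.

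More precisely, let $(\X,\Omega)$ be a test-configuration for $(X,[\omega])$, possibly singular. By Theorem \ref{MinNormAsSlope} we have
\begin{equation*}
\|(\X,\Omega)\|_m \;=\; \lim_{\tau\to\infty} \frac{d}{d\tau} J(\theta_\tau),
\end{equation*}
where $\theta_\tau=\varphi_{t}$ with $t=e^{-\tau/2}$ are the normalised potentials from Definition \ref{def:normalisationofpotentials}. By Lemma \ref{IJarenonneg} we have $J(\theta_\tau)\ge 0$ for every $\tau$. The claim is that any function $F(\tau):=J(\theta_\tau)$ with $F\ge 0$ and for which the limit $L:=\lim_{\tau\to\infty} F'(\tau)$ exists must satisfy $L\ge 0$: if $L<0$, choose $\epsilon>0$ with $L+\epsilon<0$; then there exists $\tau_0$ with $F'(\tau)<L+\epsilon$ for $\tau\ge\tau_0$, so $F(\tau)\le F(\tau_0)+(L+\epsilon)(\tau-\tau_0)\to-\infty$, contradicting $F\ge 0$. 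Applied to our $F$, this yields $\|(\X,\Omega)\|_m\ge 0$.

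The only point that needs a small check is that Theorem \ref{MinNormAsSlope} is genuinely stated for arbitrary (possibly singular) test-configurations, so that no separate reduction step is needed here; the text already notes that one passes to a resolution of indeterminacy, and the $J$-functional is computed using the fixed K\"ahler form $\omega$ on $X$ and the family of potentials $\theta_\tau$ on $X$, both of which are insensitive to whether $\X$ itself is smooth.

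The main, and essentially only, potential obstacle is verifying that the limit $\lim_{\tau\to\infty}\frac{d}{d\tau}J(\theta_\tau)$ exists as a finite real number and equals $\|(\X,\Omega)\|_m$; but this is precisely the content of Theorem \ref{MinNormAsSlope}, whose proof has already been carried out. Once that is invoked, the remaining argument is the one-line monotonicity/contradiction sketched above.
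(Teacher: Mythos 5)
Your proposal is correct and follows exactly the paper's own argument: the paper likewise combines Theorem \ref{MinNormAsSlope} with the non-negativity of $J$ from Lemma \ref{IJarenonneg} to conclude that $\lim_{\tau\to\infty}\frac{d}{d\tau}J(\theta_\tau)\ge 0$. Your explicit one-variable justification of why a bounded-below function cannot have a negative limiting derivative is merely an expanded version of the step the paper leaves implicit.
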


\begin{proof} By Lemma \ref{IJarenonneg}, the $J$-functional is \emph{always} bounded below by zero. In particular, one has $\lim_{\tau\to\infty} \frac{d}{d\tau} J(\theta_{\tau})\geq 0$. It then follows from Theorem \ref{MinNormAsSlope} that $\|(\X,\scL)\|_m\geq 0$.\end{proof}

\section{Stoppa's Theorem for K\"ahler manifolds}\label{sec:Stoppa}

The goal of the present section is to prove the following.

\begin{theorem}\label{bodystrictstability} If a K\"ahler manifold with discrete automorphism group admits a cscK metric, then it is K-stable. \end{theorem}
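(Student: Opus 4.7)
The plan is to argue by contradiction along the lines of Stoppa's blowup argument, replacing every step with its K\"ahler analogue. Suppose $(X,[\omega])$ has discrete automorphism group and admits a cscK metric but fails to be K-stable. By Berman-Berndtsson the Mabuchi functional for $[\omega]$ is bounded below, so Corollary \ref{cor:K-semistability} yields K-semistability. Any destabilising test-configuration $(\X,\Omega)$ must therefore satisfy $\DF(\X,\Omega)=0$ together with $\|(\X,\Omega)\|_m>0$, and by Proposition \ref{prop:simplifiedtestconfig} I may assume $\X$ is smooth and $\X_0$ is reduced.

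Next I would pass to a blowup of $X$ at a carefully chosen point. Pick $p\in X$ generically (in particular avoiding the images in $X\cong\X_1$ of the $\C^*$-fixed locus of $\X_0$), and form $\tilde X=\Bl_p X$ with K\"ahler class $[\tilde\omega_\epsilon]:=[\omega]-\epsilon[E]$ for $\epsilon>0$ small, where $E$ denotes the exceptional divisor. Because $\Aut(X,[\omega])$ is discrete and $(X,[\omega])$ admits a cscK metric, the K\"ahler version of the Arezzo-Pacard gluing theorem (the analytic proof going through verbatim, since projectivity is not used) produces a cscK metric on $(\tilde X,[\tilde\omega_\epsilon])$ for all sufficiently small $\epsilon>0$. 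Applying Corollary \ref{cor:K-semistability} to $\tilde X$ shows that $(\tilde X,[\tilde\omega_\epsilon])$ is K-semistable.

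To derive a contradiction I would build a test-configuration $(\tilde\X,\tilde\Omega_\epsilon)$ for $(\tilde X,[\tilde\omega_\epsilon])$ from $(\X,\Omega)$. Let $Z\subset\X$ be the closure of the $\C^*$-orbit of $p$ (viewed as a point of $\X_1\cong X$); this is a one-dimensional $\C^*$-invariant analytic subvariety meeting $\X_t$ transversely in a single point for each $t\ne 0$. Let $\sigma:\tilde\X\to\X$ be the equivariant blowup of $Z$, with exceptional divisor $F$. Exactly as in the degeneration-to-normal-cone construction recalled in Section \ref{sec:Kstabilitydefinition}, for $\epsilon$ small enough the class $[\sigma^*\Omega]-\epsilon[F]$ contains a smooth $S^1$-invariant K\"ahler representative $\tilde\Omega_\epsilon$, and the induced family $\tilde\X\to\pr^1$ is then a test-configuration for $(\tilde X,[\tilde\omega_\epsilon])$.

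The heart of the argument, and its main obstacle, is an intersection-theoretic expansion of the form
\[
\DF(\tilde\X,\tilde\Omega_\epsilon)\;=\;\DF(\X,\Omega)\;-\;c\,\epsilon^{n-1}\,\|(\X,\Omega)\|_m\;+\;O(\epsilon^n)
\]
for some constant $c=c(n)>0$. This is obtained by expanding each intersection appearing in $\DF(\tilde\X,\tilde\Omega_\epsilon)$ via the binomial identity $(\sigma^*\Omega-\epsilon F)^k=\sigma^*\Omega^k+\sum_{j\ge 1}(-\epsilon)^j\binom{k}{j}\sigma^*\Omega^{k-j}F^j$, applying the projection formula together with the fact that $F\to Z$ is a $\pr^{n-1}$-bundle (so terms $\sigma^*\Omega^a F^b$ vanish against pullbacks from $\X$ unless enough factors of $F$ are present, and the surviving top-degree contributions reduce to intersections on $Z$), and using the adjunction formula $K_{\tilde\X}=\sigma^*K_\X+(n-1)[F]$. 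After simplification the leading correction to $\DF(\X,\Omega)$ turns out to be a positive multiple of precisely the combination $[g^*\Omega]\cdot[q^*\omega]^n-\frac{[g^*\Omega]^{n+1}}{n+1}$ that defines $\|(\X,\Omega)\|_m$; this is Stoppa's original projective-case computation, which carries over essentially word-for-word since every intersection is performed on the smooth manifold $\tilde\X$. Once the expansion is established, $\DF(\X,\Omega)=0$ combined with $\|(\X,\Omega)\|_m>0$ forces $\DF(\tilde\X,\tilde\Omega_\epsilon)<0$ for all sufficiently small $\epsilon$, contradicting K-semistability of $(\tilde X,[\tilde\omega_\epsilon])$ and completing the argument.
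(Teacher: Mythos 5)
Your overall strategy (semistability from Berman--Berndtsson, blow up the orbit closure of a point, Arezzo--Pacard on the blowup, derive a contradiction from an $\epsilon^{n-1}$-expansion of the Donaldson--Futaki invariant) is the same as the paper's, but the heart of your argument contains a genuine error that hides the real difficulty. The leading correction in the expansion of $\DF(\tilde\X,\tilde\Omega_\epsilon)$ is \emph{not} a multiple of the minimum norm $\|(\X,\Omega)\|_m$. Carrying out the binomial expansion you describe, the surviving top-degree terms reduce (via the projection formula and the fact that the exceptional divisor is a $\pr^{n-1}$-bundle over the one-dimensional centre $C$) to the integral $\int_C\Omega$ of the K\"ahler form over the orbit closure, not to the $(n+1)$-dimensional intersection $[g^*\Omega].[q^*\omega]^n-\tfrac{1}{n+1}[g^*\Omega]^{n+1}$. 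The correct expansion is
$$\DF(\tilde\X,\tilde\Omega_\epsilon)=\DF(\X,\Omega)-n(n-1)\epsilon^{n-1}\,Ch_p(\X,\Omega)+O(\epsilon^n),\qquad Ch_p(\X,\Omega)=\frac{[\Omega]^{n+1}}{(n+1)[\omega]^n}-\int_C\Omega,$$
and the quantity $Ch_p$ depends on the chosen point $p$ through $C=\overline{\C^*\!\cdot p}$. Your claim that the coefficient is $\|(\X,\Omega)\|_m$ is false already for dimension reasons, and it is precisely what lets you bypass the main step.

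That main step is: given $\|(\X,\Omega)\|_m>0$, show there \emph{exists} a point $p$ with $Ch_p(\X,\Omega)>0$. Choosing $p$ ``generically'' does not accomplish this; positivity of $Ch_p$ for some $p$ is a nontrivial theorem (in the projective case it is Stoppa's Chow-weight argument via GIT). In the K\"ahler setting the paper proves it analytically: after normalising $[\Omega]^{n+1}=0$ one identifies $\int_C\Omega$ with $\lim_{\tau\to\infty}\dot\theta_\tau(p)$, assumes for contradiction that this limit is $\ge 0$ for every $p$, and then uses the associated geodesic, convexity, dominated convergence, and the Darvas--Rubinstein comparison between the $d_1$-distance and the $J$-functional to conclude that the $L^1$-norm, and hence the minimum norm, vanishes --- contradicting $\|(\X,\Omega)\|_m>0$. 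None of this is present in your proposal, so as written the argument does not close. (A secondary, fixable omission: you assume the orbit closure is smooth and transverse to the fibres; the paper treats the singular case by first passing to a resolution on which the proper transform of $C$ is smooth.)
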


Our argument is similar in spirit to Stoppa's method in the projective case \cite{Stoppablowup}. As with Stoppa's method, the method relies on the following result due to Arezzo-Pacard \cite{ArezzoPacard}.   For a point $p\in X$ we let $Bl_pX$ denote the blowup of $X$ at $p$ with exceptional divisor $F$.  

\begin{theorem}\label{Arezzo-Pacard}  Suppose $(X,[\omega])$ is a K\"ahler manifold with discrete automorphism group admitting a cscK metric, and let $p\in X$. Then for all $\epsilon$ sufficiently small, $(Bl_pX,[\omega]-\epsilon c_1(F))$ admits a cscK metric. \end{theorem}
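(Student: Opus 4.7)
The plan is to build an approximate cscK metric on $Bl_pX$ by gluing and then perturb it to an exact solution via an implicit function theorem argument, following the strategy introduced by Arezzo and Pacard. First, I would recall the Burns--Simanca metric $\eta$ on $Bl_0\C^n$: this is a scalar-flat K\"ahler ALE metric, asymptotic at infinity to the flat Euclidean metric on $\C^n$ with a Green's function correction of order $r^{4-2n}$ (for $n\geq 3$; a logarithmic correction for $n=2$), and lying in the class $-c_1(F_0)$ where $F_0$ is the exceptional divisor. Given the cscK metric $\omega\in[\omega]$ on $X$ and a point $p\in X$, I would work in Darboux coordinates near $p$ and write $\omega = i\ddbar \tfrac12|z|^2 + i\ddbar\phi$ where $\phi = O(|z|^4)$.

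The approximate solution $\omega_\epsilon$ on $Bl_pX$ is constructed by choosing two concentric shells of radii $r_\epsilon \ll R_\epsilon$ (both tending to zero with $\epsilon$, but $r_\epsilon/\epsilon \to \infty$); on the ``outer'' region $\{|z|\geq R_\epsilon\}\cup(X\setminus B(p,R_\epsilon))$ one uses $\omega$, on the ``inner'' region $\{|z|\leq r_\epsilon\}$ one uses $\epsilon^2\eta$ pulled back by a rescaling, and in the annulus $r_\epsilon \leq |z|\leq R_\epsilon$ one interpolates the K\"ahler potentials via cutoff functions. The cohomology class of this glued form is $[\omega]-\epsilon^2 c_1(F)$ (reparameterising $\epsilon^2\mapsto \epsilon$ at the end). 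The main computation is to estimate $S(\omega_\epsilon)-\hat S$ in a suitable weighted H\"older norm $C^{k,\alpha}_\delta$ adapted to the geometry of the blowup: on the outer region the error is controlled by the decay of the Burns--Simanca correction, while on the inner region it is controlled by $\epsilon^2\phi$-type terms. A careful choice of weights $\delta$ and radii $r_\epsilon$, $R_\epsilon$ yields an error of size $O(\epsilon^\beta)$ for some $\beta>0$.

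The second, analytically heavier step is to invert the linearised cscK operator. Its leading term is the Lichnerowicz operator $\scL_{\omega_\epsilon}u = (\ddbar u)^*(\ddbar u)$, whose kernel consists of holomorphy potentials for Hamiltonian holomorphic vector fields. I would establish a uniform (in $\epsilon$) right inverse
\[
G_\epsilon\colon C^{k-4,\alpha}_{\delta-4}(Bl_pX) \cap K^\perp \longrightarrow C^{k,\alpha}_\delta(Bl_pX) \cap K^\perp
\]
for $\scL_{\omega_\epsilon}$, by a parametrix construction using the known invertibility of $\scL_\omega$ on $X$ modulo its kernel and of $\scL_\eta$ on $Bl_0\C^n$ modulo $\ker$ (both with good mapping properties in the respective weighted spaces on $X\setminus\{p\}$ and on the ALE end). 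Here the hypothesis that $\Aut(X,[\omega])$ is discrete enters crucially: the kernel of $\scL_\omega$ consists only of constants, so projecting orthogonal to constants gives a genuine inverse; on the Burns--Simanca side the relevant kernel is similarly trivial (scalar-flat ALE). With such $G_\epsilon$ and a bound $\|G_\epsilon\|\leq C\epsilon^{-\gamma}$ whose failure to be uniform is dominated by the $O(\epsilon^\beta)$ error for $\beta$ large enough, the standard contraction mapping argument applied to $u\mapsto G_\epsilon(S(\omega_\epsilon + i\ddbar u) - \hat S)$ produces, for all $\epsilon$ sufficiently small, a smooth $u_\epsilon$ such that $\omega_\epsilon + i\ddbar u_\epsilon$ is cscK in $[\omega]-\epsilon c_1(F)$.

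The main obstacle is the uniform analysis of $\scL_{\omega_\epsilon}$ as $\epsilon\to 0$: the geometry degenerates (the ALE end collapses into a point of $X$), and one must control the relevant weights so that the parametrix glues with loss $\epsilon^{-\gamma}$ strictly better than the gain $\epsilon^\beta$ of the approximate-solution error. All other steps---the gluing, the computation of the scalar curvature of $\omega_\epsilon$, and the fixed-point contraction---are then essentially routine, and the discreteness of $\Aut(X,[\omega])$ is exactly what is needed to rule out the finite-dimensional obstruction space that would otherwise obstruct the perturbation.
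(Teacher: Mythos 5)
This statement is not proved in the paper at all: it is quoted as a known deep result of Arezzo and Pacard \cite{ArezzoPacard}, so there is no internal argument to compare against, and your task here was effectively to reconstruct the cited proof. Your sketch does this faithfully at the level of strategy: the Burns--Simanca scalar-flat ALE metric on $Bl_0\C^n$ (with decay $r^{4-2n}$ for $n\ge 3$, logarithmic for $n=2$), the three-region gluing with $\epsilon \ll r_\epsilon \ll R_\epsilon$, the class $[\omega]-\epsilon^2 c_1(F)$, the weighted H\"older framework, and the balance between the $O(\epsilon^\beta)$ approximate-solution error and the $O(\epsilon^{-\gamma})$ loss in the right inverse are exactly the skeleton of the Arezzo--Pacard argument (in the global weighted-space formulation rather than their original matching of Cauchy data across the neck, which is an equally valid route). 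You also correctly locate where discreteness of $\Aut(X,[\omega])$ enters: it forces $\ker\scL_\omega$ to consist of constants, eliminating the finite-dimensional obstruction space, while the Burns--Simanca end contributes no decaying kernel; and, importantly for this paper, nothing in the construction uses projectivity, so the result holds for arbitrary K\"ahler classes, which is the form invoked in Section 5. Two slips you should repair if writing this out in full: first, the Lichnerowicz operator is $\scL_\omega u = \mathcal{D}^*\mathcal{D}u$ with $\mathcal{D}u=\bar\partial\nabla^{1,0}u$ --- your expression $(\ddbar u)^*(\ddbar u)$ is not a linear operator --- and since $\omega_\epsilon$ is only approximately cscK, the actual linearization carries an extra first-order term $\tfrac12\,\nabla S(\omega_\epsilon)\cdot\nabla u$ that must be absorbed into the error estimates; second, the contraction map should be arranged as $u\mapsto -G_\epsilon\bigl(S(\omega_\epsilon)-\hat S+Q_\epsilon(u)\bigr)$, where $Q_\epsilon$ is the nonlinear remainder of the scalar curvature expansion, because a fixed point of the map you wrote, $u\mapsto G_\epsilon\bigl(S(\omega_\epsilon+i\ddbar u)-\hat S\bigr)$, satisfies $\scL_{\omega_\epsilon}u = S(\omega_\epsilon+i\ddbar u)-\hat S$ rather than the cscK equation. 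Neither slip damages the overall scheme, which is the correct one.
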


The argument is then as follows. We wish to show that the Donaldson-Futaki invariant of each non-trivial test-configuration $(\mathcal X,\co)$ for $(X,[\omega])$ is strictly positive.  By the K-semistability result from the previous section we have only to consider the possibility of such a test-configuration with zero Donaldson-Futaki invariant.   Let $p\in X$ and denote by $C$ the closure of the $\mathbb C^*$ orbit of the point $p\in X=\X_1$ inside $\X$.  Assuming $\X$ and $C$ are smooth, then by blowing up along $C$, with exceptional divisor $E$, we get for $\epsilon$ sufficiently small a test-configuration $(\Y,\co-\epsilon \xi_\epsilon)$ for $(Bl_pX,[\omega]-\epsilon c_1(F))$  where $\xi_\epsilon\in c_1(E)$ is some smooth $(1,1)$-form in $c_1(E)$. The goal is to explicitly calculate the change of the Donaldson-Futaki invariant as $\epsilon$ tends to zero, and show that this contradicts the K-semistability result for the blowup.

We start by giving the following definition by analogy with Stoppa's work:

\begin{definition}[Chow weight] Given a test-configuration $(\X,\co)$ for $(X,[\omega])$ and a point $p\in X$ the \emph{Chow weight} is defined to be $$Ch_p(\X,\co) = \frac{[\co]^{n+1}}{(n+1)[\omega]^n} - \int_{C}\co$$
where $C$ is the closure of the $\mathbb C^*$-orbit containing $p$. \end{definition}

With this definition in place, we will prove the following.

\begin{proposition}\label{inducedDF}Suppose $\X$ and $C$ are smooth. The Donaldson-Futaki invariant of $(\Y,\co-\epsilon\xi_{\epsilon})$ is 
$$\DF(\Y,\co-\epsilon\xi_{\epsilon}) = \DF(\X,\co) - n(n-1)\epsilon^{n-1}Ch_p(\X,\co)+O(\epsilon^n).$$\end{proposition}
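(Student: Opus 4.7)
The plan is to compute $\DF(\Y,\co-\epsilon\xi_\epsilon)$ by direct expansion in $\epsilon$ and identify the coefficient of $\epsilon^{n-1}$. Let $b\colon \Y \to \X$ denote the blowup along the smooth curve $C$ with exceptional divisor $E$. Since $C$ has codimension $n$ in the smooth $(n+1)$-fold $\X$, the adjunction formula gives $c_1(\Y) = b^*c_1(\X) - (n-1)[E]$, while the Segre-class identities for blowups give $b_*[E]^k = 0$ for $k<n$ and $b_*[E]^n = (-1)^{n-1}[C]$. Also $\pi_\Y^* c_1(\pr^1) = b^*\pi^* c_1(\pr^1)$ and $[\xi_\epsilon] = [E]$ in cohomology. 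The basic mechanism is that because $E$ has codimension $n$, most corrections arising from binomial expansion vanish under push-forward; the sole $\epsilon^{n-1}$ contribution comes from the $-(n-1)[E]$ piece in the adjunction formula, paired with $[\xi_\epsilon]^{n-1}$.

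I would proceed in three steps. First, expand $[\co-\epsilon\xi_\epsilon]^{n+1}$ on $\Y$ binomially; pushing forward via the projection formula, every term for $0<k<n$ vanishes, yielding $[\co-\epsilon\xi_\epsilon]^{n+1} = [\co]^{n+1} - (n+1)\epsilon^n \int_C \co + O(\epsilon^{n+1})$. Second, expand $(c_1(\Y)-\pi_\Y^*c_1(\pr^1))[\co-\epsilon\xi_\epsilon]^n$, decomposing the first factor as above. The $b^*(c_1(\X)-\pi^*c_1(\pr^1))$-piece reproduces $(c_1(\X)-\pi^*c_1(\pr^1))[\co]^n$ up to $O(\epsilon^n)$ by the same vanishing, while the $-(n-1)[E]$-piece, when combined with the $k=n-1$ term of the expansion of $[\co-\epsilon\xi_\epsilon]^n$, contributes precisely $-n(n-1)\epsilon^{n-1}\int_C\co$ using $b_*[E]^n = (-1)^{n-1}[C]$. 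Third, carry out the analogous (and simpler) Segre-class calculation on $X$ itself, where $p\in X$ has codimension $n$ in the $n$-fold $X$, to expand the slope:
$$\mu(Bl_pX,[\omega]-\epsilon c_1(F)) = \mu(X,[\omega]) - \frac{(n-1)\epsilon^{n-1}}{[\omega]^n} + O(\epsilon^n).$$

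Assembling these three expansions into the definition of $\DF(\Y,\co-\epsilon\xi_\epsilon)$, the terms of order $\epsilon^0$ reproduce $\DF(\X,\co)$. At order $\epsilon^{n-1}$, the slope correction paired with the leading $[\co]^{n+1}$ contributes $-n(n-1)\epsilon^{n-1}[\co]^{n+1}/((n+1)[\omega]^n)$, and subtracting the canonical correction contributes $+n(n-1)\epsilon^{n-1}\int_C\co$; summing gives exactly $-n(n-1)\epsilon^{n-1}Ch_p(\X,\co)$. The main obstacle is bookkeeping: one must confirm that every potential $\epsilon^{n-1}$ contribution---including cross-terms between the slope correction and the volume correction, and any pairing of $b^*$-classes with low powers of $[E]$---is genuinely $O(\epsilon^n)$. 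All of these reduce to the Segre-class vanishing $b_*[E]^k = 0$ for $k<n$, so the entire argument is purely cohomological and no analytic input beyond the statement itself is needed.
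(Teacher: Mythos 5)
Your proposal is correct and follows essentially the same route as the paper: binomial expansion of the intersection numbers on the blowup, $c_1(\Y)=b^*c_1(\X)-(n-1)[E]$, the projection formula to kill all low powers of $[E]$, the identity $b_*[E]^n=(-1)^{n-1}[C]$ (which the paper proves as the statement $-[-\xi]^n.[\co]=\int_C\co$ by reducing to a standard projective computation on the exceptional divisor, extended to $\R$-classes by continuity), and the elementary expansion of the slope of $(Bl_pX,[\omega]-\epsilon c_1(F))$. The assembled $\epsilon^{n-1}$ coefficient you obtain agrees with the paper's.
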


If $\X$ or $C$ is singular,  we will produce in Section \ref{KahlerStoppaFormula} a test configuration $(\B,\co_{\B})$ for  $(Bl_pX,[\omega]-\epsilon c_1(F))$ using a slightly different construction which similarly satisfies $$\DF(\B,\co_{\B}) = \DF(\X,\co) - n(n-1)\epsilon^{n-1}Ch_p(\X,\co)+ O(\epsilon^n).$$

Thus the proof of Theorem \ref{bodystrictstability} is completed by the following result:

\begin{proposition}\label{point-choice} For any (possibly singular) test-configuration $(\X,\co)$ that satisfies $\|(\X,\co)\|_m>0$, there exists a point $p\in X$ such that $$Ch_p(\X,\co)>0.$$ \end{proposition}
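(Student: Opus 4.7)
The plan is to prove the statement by an averaging argument: integrate $p \mapsto Ch_p(\X,\co)$ over $X$ against the probability measure $\omega^n/[\omega]^n$ and relate the resulting average to the minimum norm $\|(\X,\co)\|_m$. If this average has the desired sign, then continuity of $p \mapsto Ch_p$ (which follows from the fact that the orbit closures vary continuously with $p$ outside the central fibre) guarantees a point where the strict inequality $Ch_p > 0$ holds.

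The heart of the argument is a Fubini computation performed on a resolution of indeterminacy. Take the diagram from Definition~\ref{def:minnormanalytic}:
\[
\begin{tikzcd}
\Y \arrow[swap]{d}{q} \arrow{dr}{g} &  \\
X\times\pr^1 \arrow[dotted]{r}{f} & \X
\end{tikzcd}
\]
with $\Y$ smooth (which we may arrange by first taking a resolution of $\mathcal X$ and perturbing $\co$ slightly if needed, as in Proposition~\ref{prop:simplifiedtestconfig}). For a point $p\in X$ lying outside a proper analytic subset, the fibre of $p_{X}\circ q\colon \Y\to X$ over $p$ is precisely the proper transform of $\{p\}\times\pr^1$, and $g$ sends this biholomorphically onto the orbit closure $C_{p}$. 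Since the exceptional locus of $q$ has measure zero, Fubini on $\Y$ then gives
\[
\int_{X}\!\left(\int_{C_{p}}\!\co\right)\omega^{n}\;=\;\int_{\Y} g^{*}\co\wedge q^{*}(p_{X}^{*}\omega)^{n}\;=\;[g^{*}\co]\cdot[q^{*}\omega]^{n}.
\]

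Combining this with the identity $[g^{*}\co]^{n+1}=[\co]^{n+1}$ (top intersection is preserved under bimeromorphic morphisms, compare Lemma~\ref{lem:IndOfResSing} and the remark following it), one rewrites the average Chow weight purely in terms of intersection numbers on $\Y$:
\[
\int_{X}Ch_{p}(\X,\co)\,\frac{\omega^{n}}{[\omega]^{n}}\;=\;\frac{[\co]^{n+1}}{(n+1)[\omega]^{n}}\;-\;\frac{[g^{*}\co]\cdot[q^{*}\omega]^{n}}{[\omega]^{n}}\;=\;\pm\frac{\|(\X,\co)\|_{m}}{[\omega]^{n}}.
\]
The hypothesis $\|(\X,\co)\|_{m}>0$ makes this average nonzero with a definite sign, so the continuous function $p\mapsto Ch_{p}$ cannot be identically zero, and in particular there must exist a point where it achieves the desired strict positivity.

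The main obstacle, and the step that requires extra care, is pinning down the correct sign in the averaging identity and, relatedly, dealing with test-configurations arising from a one-parameter subgroup of $\Aut(X)$ where $p\mapsto\int_{C_{p}}\co$ may be essentially constant along a generic stratum. In that degenerate situation a direct averaging gives no room to produce strict positivity, and one has to argue either by reducing to test-configurations whose underlying family is not a product (consistent with the setting of Theorem~\ref{bodystrictstability}, in which $\Aut(X,[\omega])$ is discrete so no product test-configurations arise), or by selecting $p$ carefully among points whose orbit limits in $\X_{0}$ lie on a distinguished fixed locus of the $\C^*$-action, where the volume $\int_{C_{p}}\co$ is controllably small. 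Carrying out this refinement — essentially, upgrading the non-vanishing of the average to strict positivity at some explicit point — is where I expect the real work of the proof to lie.
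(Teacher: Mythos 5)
Your Fubini identity is essentially correct, but it sinks the approach rather than saving it: working it through, the average you compute is
\[
\int_{X}Ch_{p}(\X,\co)\,\frac{\omega^{n}}{[\omega]^{n}}
=\frac{[\co]^{n+1}}{(n+1)[\omega]^{n}}-\frac{[g^{*}\co]\cdot[q^{*}\omega]^{n}}{[\omega]^{n}}
=-\frac{\|(\X,\co)\|_{m}}{[\omega]^{n}}<0,
\]
so the sign is definitely \emph{negative}, not ambiguous. A strictly negative average only produces a point with $Ch_{p}<0$, and your final step --- ``the function is not identically zero, hence it is somewhere strictly positive'' --- is a non sequitur: nothing prevents $p\mapsto Ch_{p}$ from being negative (or zero) everywhere as far as this computation is concerned. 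The degenerate product case you worry about is not the real obstruction; the obstruction is that averaging $\int_{C_{p}}\co$ against the \emph{fixed} measure $\omega^{n}$ gives information with the wrong sign, and no refinement of the choice of $p$ within this framework can reverse it.

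The paper's proof uses a genuinely different mechanism, and the contrast is instructive. After normalising $[\co]^{n+1}=0$ (Lemma \ref{normalisingtheDF}), one wants a point with $\int_{C_{p}}\co<0$. Lemma \ref{0-dimensionalKempfNess} identifies $\int_{C_{p}}\co$ with the limit slope $\lim_{\tau\to\infty}\dot\theta_{\tau}(p)$, while Lemma \ref{AubinMabuchiNormalisation} gives $\lim_{\tau\to\infty}\int_{X}\dot\theta_{\tau}\,\omega_{\theta_{\tau}}^{n}=0$ --- crucially against the \emph{moving} Monge--Amp\`ere measure $\omega_{\theta_{\tau}}^{n}$, not $\omega^{n}$; the discrepancy between these two averages is exactly the minimum norm, which is why your computation and the paper's are consistent yet lead in opposite directions. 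If no destabilising point existed, then $\lim_{\tau}\dot\theta_{\tau}(p)\ge0$ for every $p$; passing to the associated geodesic (whose convexity in $\tau$ supplies the domination needed for the Dominated Convergence Theorem) one concludes that the $L^{1}$-norm $\lim_{\tau}\int_{X}|\dot\theta_{\tau}|\,\omega_{\theta_{\tau}}^{n}$ vanishes. Proposition \ref{L1DominatesMinimum}, which rests on the Darvas--Rubinstein comparison between the $J$-functional and the Finsler distance $d_{1}$, then forces $\|(\X,\co)\|_{m}=0$, contradicting the hypothesis. So the missing ideas in your proposal are precisely the pointwise Kempf--Ness identity, the role of the evolving measure, and the $L^{1}$-norm versus minimum-norm comparison; the averaging against $\omega^{n}$ cannot substitute for them.
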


\begin{proof}[Proof of Theorem \ref{bodystrictstability}] Suppose  $(X,[\omega])$ is K-semistable but not K-stable. Then there is a test-configuration $(\X,\co)$ for $(X,[\omega])$ with $\|(\X,\co)\|_m>0$ but $\DF(\X,\co)=0$. Choosing $p$ as in Proposition \ref{point-choice}, for $\epsilon$ sufficiently small, we have $\DF(\Y,\co-\epsilon \xi_{\epsilon})<0$ (or $\DF(\B,\co_{\B})<0$ if $\X$ or $C$ is singular). Therefore $(Bl_pX,[\omega]-\epsilon c_1(F))$ is K-unstable. This contradicts the K-semistability, which follows from the combination of Theorem \ref{Arezzo-Pacard} and Corollary \ref{cor:K-semistability}. Therefore $(X,[\omega])$ must be K-stable as claimed.
\end{proof}

\subsection{Discussion of the projective case}\label{sec:stoppaproj}
Before proving the required results for K\"ahler manifolds, we briefly discuss the projective case.   We will assume here that various quantities involved are smooth to make the exposition clearer (in any case we will prove the corresponding results in the more general K\"ahler setting in Section \ref{KahlerStoppaFormula}). So let $(X,L)$ be a smooth projective variety and an ample $\R$-line bundle. Let $(\X,\scL)$ be a smooth test-configuration for $(X,L)$, with $\scL$ an $\R$-line bundle. 

Fix a point $p\in X$, and denote $C$ the closure $\overline{\C^*.p}$. We assume that $C$ is smooth, so that $C\cong\pr^1$. Set $\psi:\B=Bl_C\X \to \X$, with exceptional divisor $E$. Then $(\B,\scL-\epsilon E)$ is clearly a test-configuration for $(Bl_pX,L-\epsilon F)$.  We wish to calculate the Donaldson-Futaki invariant of  $(\B,\scL-\epsilon E)$. We have
$$K_{\B} = \psi^*K_{\X}+(n-1)E,$$ and also

\begin{align*}(L-\epsilon F)^n &= L^n -\epsilon^n, \\ -K_{\Bl_pX}.(L-\epsilon F)^{n-1} &= -L^{n-1}.K_X - (n-1)\epsilon^{n-1}.\end{align*} 

Similarly, by the projection formula, we see \begin{align*}(\scL-\epsilon E)^{n+1} &= \scL^{n+1} + (n+1)(-\epsilon E)^n.\scL + O(\epsilon^{n+1}), \\ K_{\B/\pr^1}.(\scL-\epsilon E)^n &= \scL^n.K_{\X/\pr^1}+n(n-1)(-\epsilon E)^{n-1}.E.\scL + O(\epsilon^n).\end{align*}

The following standard intersection-theoretic result relates the computed intersection numbers to intersections on $\X$.

\begin{lemma}\label{projective-intersections} $(-E)^n.\scL = \scL.C.$\end{lemma}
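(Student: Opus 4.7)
The plan is a standard intersection-theoretic computation exploiting the structure of the blowup $\psi : \B \to \X$ along the smooth curve $C$. Since $\X$ has dimension $n+1$ and $C$ has dimension one, $C$ has codimension $n$ in $\X$, and the exceptional divisor is identified with the projective bundle $\pi : \pr(N_{C/\X}) \to C$ of the rank-$n$ normal bundle. Setting $\xi = c_1(\scO_E(1))$ in the Grothendieck convention, the standard projective bundle identity reads $\pi_* \xi^{n-1} = [C]$, and the fundamental blowup relation $\scO_E(E) \cong \scO_{\pr(N_{C/\X})}(-1)$ gives $-E|_E = \xi$.

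With these identifications in place, I would apply the self-intersection formula to reduce the top-degree intersection on $\B$ to an integral on $E$, using that $\psi|_E = \pi$:
\begin{equation*}
(-E)^n \cdot \psi^* \scL \;=\; \int_E (-E|_E)^{n-1} \cdot (\psi^*\scL)|_E \;=\; \int_E \xi^{n-1} \cdot \pi^*(\scL|_C).
\end{equation*}
The projection formula then immediately collapses the right hand side:
\begin{equation*}
\int_E \xi^{n-1} \cdot \pi^*(\scL|_C) \;=\; \int_C \pi_*(\xi^{n-1}) \cdot \scL|_C \;=\; \int_C \scL|_C \;=\; \scL \cdot C,
\end{equation*}
which gives the stated identity.

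I do not anticipate any genuine obstacle, as the argument is entirely formal. The only point requiring care is the sign identification $-E|_E = \xi$, which hinges on the convention chosen for $\pr(N_{C/\X})$; I would verify it by noting that $\scO_E(-E)$ is the conormal bundle of $E$ in $\B$, which under the standard description of the exceptional divisor of a blowup corresponds to the tautological quotient of $\pi^* N_{C/\X}$, i.e.\ $\scO_{\pr(N_{C/\X})}(1)$.
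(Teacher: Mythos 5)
Your route---identify $E$ with the projectivised normal bundle of $C$, use $\scO_E(E)\cong\scO(-1)$, restrict to $E$, and push down with $\pi_*\xi^{n-1}=[C]$---is the standard argument and is exactly what the paper has in mind (it offers no proof here, and the proof of the K\"ahler analogue, Lemma \ref{KahlerIntersectionNumberForms}, reduces to precisely this computation on $E$). However, your first displayed equation drops a sign. The class you restrict to the subvariety $E$ is $E$ itself, not $-E$, so only $n-1$ of the $n$ minus signs can be absorbed into $-E|_E=\xi$:
\[
(-E)^n\cdot\psi^*\scL \;=\;(-1)^n\,E\cdot E^{n-1}\cdot\psi^*\scL\;=\;(-1)^n\int_E (E|_E)^{n-1}\cdot(\psi^*\scL)|_E\;=\;-\int_E(-E|_E)^{n-1}\cdot(\psi^*\scL)|_E .
\]
Carried through, your own chain of identities therefore yields $(-E)^n\cdot\scL=-\scL\cdot C$, not $+\scL\cdot C$. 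A quick sanity check: for $n=2$, $\X=\pr^3$, $C$ a line and $\scL=\scO(1)$, one has $(-E)^2\cdot H=E^2\cdot H=\int_E(-\xi)\wedge\pi^*(H|_C)=-1$, while $H\cdot C=+1$.

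The resolution is that the lemma as printed carries a sign typo, and your compensating slip reproduces the typo rather than the correct identity. Indeed the sign the paper actually uses downstream is the corrected one: the expansion \eqref{DF-blowup-projective} with $Ch_p(\X,\scL)=\frac{\scL^{n+1}}{(n+1)L^n}-\scL\cdot C$ forces $(-\epsilon E)^{n-1}\cdot E\cdot\scL=+\epsilon^{n-1}\,\scL\cdot C$, which is equivalent to $(-E)^n\cdot\scL=-\scL\cdot C$; and the K\"ahler counterpart is stated with the extra minus sign, $-[-\xi]^n\cdot[\co]=\int_C\co$. Everything else in your write-up (the identification of $E$ with the projectivised normal bundle, $-E|_E=\xi$, $\pi_*\xi^{n-1}=[C]$, and the projection formula) is correct; you should simply fix the restriction step and state the conclusion as $(-E)^n\cdot\scL=-\scL\cdot C$.
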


Defining  $$Ch_p(\X,\scL)=\frac{\scL^{n+1}}{(n+1)L^n} - \scL.C$$ and putting this together gives that the Donaldson-Futaki invariant of $(\Y,\scL-\epsilon E)$ is
\begin{equation}
\DF(\B,\scL-\epsilon E) = \DF(\X,\scL) - n(n-1)\epsilon^{n-1}Ch_p(\X,\scL) + O(\epsilon^n) \label{DF-blowup-projective} 
\end{equation}
which is nothing other than Proposition \ref{inducedDF} in the projective case \cite{Stoppablowup,StoppaUnstable} (the point we are emphasising here once again is that such calculations are made easier if one considers the Donaldson-Futaki invariant in terms of intersection theory on the total space of the test-configuration).  Thus one is left to prove the following statement:

\begin{theorem}(Stoppa)\label{thm:stoppaChow} Suppose $(\X,\scL)$ is not the trivial test-configuration $(X\times\pr^1,L)$. Then there exists a point $p\in X$ such that $$Ch_p(\X,\scL)=\frac{\scL^{n+1}}{(n+1)L^n} - \scL.C >0.$$\end{theorem}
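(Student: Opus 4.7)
\emph{Plan.} For $p \in X$ write $\nu(p) := \scL \cdot C_p$, so that the desired inequality $Ch_p(\X,\scL) > 0$ becomes $\nu(p) < \tfrac{\scL^{n+1}}{(n+1)L^n}$. The strategy is to prove the averaging identity
\begin{equation*}
\frac{\scL^{n+1}}{(n+1)!} \;=\; \int_X \nu(p) \, \frac{c_1(L)^n}{n!},
\end{equation*}
which exhibits $\tfrac{\scL^{n+1}}{(n+1)L^n}$ as the average of $\nu$ over $X$, and then to rule out the case that $\nu$ is constant using non-triviality. I would verify the identity by resolving the bimeromorphic map $f : X \times \pr^1 \dashrightarrow \X$ to obtain $g : \Y \to \X$ and $q : \Y \to X \times \pr^1$; writing $g^*\scL$ as the pull-back of $p_X^*L$ plus a $q$-exceptional class supported over $X \times \{0\}$ and expanding $(g^*\scL)^{n+1}$ by the binomial formula, every term drops out either because $(p_X^*L)^{n+1} = 0$ or for $q$-exceptionality reasons, and the sole surviving term computes the fibrewise degree $\scL\cdot C_p$ integrated against $c_1(L)^n/n!$.

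Granting the identity, one has $\min_{p\in X} \nu(p) \le \tfrac{\scL^{n+1}}{(n+1)L^n}$, with strict inequality at some $p \in X$ unless $\nu$ is constant. It therefore suffices to show that $\nu$ is non-constant whenever $(\X,\scL) \ne (X\times\pr^1, L)$. I would fix $r$ sufficiently large and divisible that $L^{\otimes r}$ is very ample and the restriction $H^0(\X, \scL^{\otimes r}) \twoheadrightarrow V_r := H^0(X, L^{\otimes r})$ is equivariantly surjective, and consider the induced integer $\C^*$-weights $\lambda_1^{(r)} \le \cdots \le \lambda_{N_r}^{(r)}$ on $V_r$. A direct computation on $C_p \cong \pr^1$, using its two $\C^*$-fixed points and the weights of $\scL|_{C_p}$ there, identifies $r\,\nu(p)$ with the minimum weight of a section $s \in V_r$ with $s(p) \ne 0$. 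Minimising over $p$ gives $r\min_{p\in X}\nu(p) = \lambda_1^{(r)}$, while asymptotic Riemann-Roch applied to $(\X,\scL) \to \pr^1$ yields $\sum_i \lambda_i^{(r)}/(rN_r) \to \tfrac{\scL^{n+1}}{(n+1)L^n}$, so constancy of $\nu$ forces $\lambda_1^{(r)} = \bar\lambda^{(r)} + o(r)$.

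\textbf{Main obstacle.} The heart of the argument is translating this asymptotic concentration of $\C^*$-weights into the vanishing of the minimum norm $\|(\X,\scL)\|_m$, for by the characterisation stated just before the theorem ($\|(\X,\scL)\|_m = 0 \iff (\X,\scL)\cong (X\times\pr^1, L)$) such vanishing contradicts non-triviality. Concretely, one expresses $\|(\X,\scL)\|_m$ intrinsically as $g^*\scL \cdot (q^*L)^n - (g^*\scL)^{n+1}/(n+1)$ and, via the Knudsen-Mumford expansion of both $\sum_i \lambda_i^{(r)}$ and the second moment $\sum_i (\lambda_i^{(r)})^2$, identifies its leading asymptotic with the variance of the rescaled weights $\lambda_i^{(r)}/r$; concentration of these weights to leading order then kills this leading coefficient. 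This identification of the minimum norm with a weight variance, well established in \cite{BHJ,Dervan}, is the only delicate technical point; everything else is formal intersection theory.
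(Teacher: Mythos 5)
Your argument breaks at its very first step: the averaging identity $\tfrac{\scL^{n+1}}{(n+1)!}=\int_X\nu(p)\,c_1(L)^n/n!$ is false, and the error term is precisely the quantity the theorem is about. Test it on the deformation to normal cone of a point: $X=\pr^1$, $L=\scO(1)$, $\X=\Bl_{(q,0)}(X\times\pr^1)$, $\scL=p^*L-cE$. Here $\nu(p)=\scL. C_p=0$ for every $p\neq q$ (the orbit closure avoids $E$) while $\nu(q)=-c$, so $\int_X\nu\,c_1(L)=0$; but $\scL^{2}=c^2E^2=-c^2$, so $\scL^{2}/2=-c^2/2\neq 0$. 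The flaw in your proposed verification is the claim that the terms $(q^*L)^{n+1-k}.D^k$ with $k\ge 2$ (where $g^*\scL=q^*L+D$) vanish for exceptionality reasons: they do not, and their total contribution is exactly $-(n+1)$ times the minimum norm. What is true is that $\int_X\nu\,c_1(L)^n$ equals $g^*\scL.(q^*L)^n$ (the generic value of $\nu$ times $L^n$, since $\nu$ is constant off a proper analytic subset, which is $c_1(L)^n$-null), and by definition $g^*\scL.(q^*L)^n-\tfrac{(g^*\scL)^{n+1}}{n+1}=\|(\X,\scL)\|_m\ge 0$. So the average of $\nu$ sits \emph{above} the threshold $\scL^{n+1}/((n+1)L^n)$ by $\|(\X,\scL)\|_m/L^n$, which is strictly positive exactly in the nontrivial case. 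Your reduction therefore fails twice over: ``$\min\nu\le$ average, strictly unless $\nu$ is constant'' cannot reach below the threshold when the threshold lies strictly below the average; and since $\nu$ equals its average on a dense Zariski-open set, non-constancy (confined to a measure-zero set) never produces a point with $\nu$ strictly below the average in any case. In the example one needs the quantitative inequality $\nu(q)=-c<-c^2/2$; that quantitative drop at special points is the entire content of the theorem, and it is not addressed by the weight-concentration argument of your second paragraph, whose hypothesis (constancy of $\nu$) is never actually reached, nor by the variance identity of the third.

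For comparison, the paper does not reprove Stoppa's projective statement (it is cited, and reformulated as ``nef but not big implies trivial''), but it does prove the K\"ahler analogue in Proposition \ref{final-existence-destabilising-point}, and that proof is organised precisely around the failure of your identity. After normalising $[\co]^{n+1}=0$ one has $\lim_\tau\int_X\dot\theta_\tau\,\omega_{\theta_\tau}^n=0$ while $\int_{C}\co=\lim_\tau\dot\theta_\tau(p)$; the obstruction to concluding from ``$\lim_\tau\dot\theta_\tau(p)\ge 0$ for all $p$'' is exactly that one may not exchange the limit with the integral, because mass concentrates on the central fibre --- the analytic shadow of the minimum-norm defect above. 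The paper circumvents this by passing to the associated geodesic, using its convexity in $\tau$ to invoke dominated convergence, concluding that the $L^1$-norm of the test-configuration vanishes, and then applying the comparison $\tfrac{1}{C}\|(\X,\co)\|_m\le\|(\X,\co)\|_1$ of Proposition \ref{L1DominatesMinimum} (via Darvas--Rubinstein) to contradict $\|(\X,\co)\|_m>0$. Any repair of your argument must supply a substitute for that step; the soft ``minimum versus average'' dichotomy cannot.
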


This statement is proved by Stoppa, and we shall not repeat his argument here.  However given the point of view we have been taking, it has an equivalent and simple formulation.  We may normalise so $\mathcal L^{n+1}=0$.  Suppose for contradiction that no $p$ exists.  Then $\mathcal L.C\geq 0$ for all such curves $C$.  But $\mathcal L$ is relatively ample so this implies $\mathcal L.C\ge 0$ for all $\mathbb C^*$-invariant curves $C$ inside $\X$ and so $\mathcal L$ is nef.  Thus Theorem \ref{thm:stoppaChow} is equivalent to the following statement:

\begin{theorem} Suppose $(\X,\scL)$ is a test-configuration such that $\scL$ is nef but not big. Then $(\X,\scL)$ is the trivial test-configuration, i.e. $(\X,\scL) \cong (X\times\pr^1, L)$.\end{theorem}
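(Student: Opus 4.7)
The plan is to deduce this statement as a direct reformulation of Stoppa's theorem (Theorem \ref{thm:stoppaChow}), which the authors are treating as a black box. First I would unpack the hypothesis: since $\scL$ is nef on the $(n+1)$-dimensional normal projective variety $\X$, being ``not big'' is equivalent to the vanishing $\scL^{n+1}=0$, by the standard characterization that a nef class is big if and only if its top self-intersection is strictly positive.

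Next, I would evaluate the Chow weight at an arbitrary point $p \in X$. Writing $C=\overline{\C^*\cdot p}$ for the closure in $\X$ of the $\C^*$-orbit through $p$ (a curve if $p$ is not a $\C^*$-fixed point, and a point otherwise, in which case $\int_C\scL = 0$), nefness of $\scL$ forces $\scL\cdot C\geq 0$. Substituting $\scL^{n+1}=0$ into the definition of the Chow weight therefore gives
\[
Ch_p(\X,\scL) \;=\; \frac{\scL^{n+1}}{(n+1)L^n} - \scL\cdot C \;=\; -\scL\cdot C \;\leq\; 0
\]
for every $p \in X$. In particular no point achieves $Ch_p(\X,\scL)>0$.

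Finally, the contrapositive of Theorem \ref{thm:stoppaChow} applies directly: any non-trivial test-configuration must admit some $p$ with $Ch_p>0$, so the vanishing of the Chow weight at every $p$ forces $(\X,\scL)$ to be equivariantly isomorphic to the trivial test-configuration $(X\times\pr^1,L)$. Conceptually there is no real obstacle in this proof — the entire point of the reformulation is to recast the Chow-weight positivity of Stoppa's theorem in the cleaner language of positivity of $\scL$ as a line bundle on the total space. All the genuine analytic and geometric difficulty is hidden inside Stoppa's original blowup and Arezzo--Pacard gluing argument in \cite{Stoppablowup}, which we do not reproduce.
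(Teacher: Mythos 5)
Your proposal is correct and takes essentially the same route as the paper: the authors present this theorem precisely as the reformulation of Stoppa's Chow-weight statement (Theorem \ref{thm:stoppaChow}), taken as a black box, using exactly the two observations you make --- that a nef class is big if and only if its top self-intersection is positive, so ``nef but not big'' gives $\scL^{n+1}=0$, and that nefness forces $\scL\cdot C\ge 0$ for every orbit closure, so $Ch_p(\X,\scL)\le 0$ for all $p$. (One harmless inaccuracy: since the $\C^*$-action on $\X$ covers the standard action on $\pr^1$, the orbit closure $C=\overline{\C^*\cdot p}$ of a point $p\in X\cong\X_1$ is always a curve, never a point.)
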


It should be possible to give a direct proof of this using basic positivity properties of line bundles (and thus completely avoid the Geometric Invariant Theory arguments of Stoppa), but we will not consider that question further here.

\begin{remark} The results extend easily to the higher dimensional case (compare Della Vedova \cite{DellaVedova}). Fix a smooth subvariety $Z\subset X$ of dimension $m$,  so that the case we have considered is $\dim Z =0$. Let $(\X,\scL)$ be a smooth test-configuration, and suppose the closure $\scZ$ of the $\C^*$-orbit of $Z$ in $\X$ is smooth. Then the blow-up $\B=\Bl_{\scZ}\X$ is a test-configuration for $Bl_Z X$, just as above. Arguing as in the case $m=0$, its Donaldson-Futaki invariant admits an expansion of the form $$ \DF(\B,\scL-\epsilon E) = \DF(\X,\scL) + c\epsilon^{n-m-1}\DF_Z(\X,\scL) + O(\epsilon^{n-m}),$$ where $c>0$ and $\DF_Z(\X,\scL)$ is the leading term in the expansion of the $X$-twisted asymptotic Chow weight of $(Z,L|_Z)$, introduced by the first author and Keller \cite{DervanKeller}. The smoothness hypotheses are satisfied, for example, when $(\X,\scL)$ is a test-configuration induced by a holomorphic vector field on $(X,L)$. This answers a question of Stoppa \cite[Remark 4.13]{StoppaUnstable}. Similarly, using this technique it is straightforward to compute the lower order terms in the expansion of the Donaldson-Futaki invariant given in equation (\ref{DF-blowup-projective}).\end{remark} 

\subsection{The Donaldson-Futaki invariant of the induced test-configuration}\label{KahlerStoppaFormula}

We return to the K\"ahler setting.  Fix a test-configuration $(\X,\co)$ for $(X,[\omega])$, and fix a point $p\in X$. Denote by $C=\overline{\C^*.p}$, the closure of the $\C^*$-orbit of $p$. From this data, we construct a test-configuration for the blow-up of $X$ at $p$, and compute its Donaldson-Futaki invariant. For now assume both $\X$ and $C$ are smooth, so $C$ is automatically isomorphic to $\mathbb P^1$.

Let $F$ be the exceptional divisor of the blow-up $\nu_1: B\to X$ at a point $p$, and let $\zeta\in c_1(F)$. Then $\nu_1^*[\omega]-\epsilon c_1(F)$ is a K\"ahler class for $\epsilon$ sufficiently small.

Similarly, let $\nu_2: \B = Bl_C\X\to\X$ be the blowup of $\X$ along $C$, so $\nu_2^*[\Omega]-\epsilon[E]$ is a K\"ahler class for $\epsilon$ sufficiently small.  Thus there exists a family of smooth $(1,1)$-forms $\xi_{\epsilon}\in c_1(E)$, such that $\co-\epsilon \xi_\epsilon$ is K\"ahler on $\Y$ for $\epsilon$ sufficiently small (here we have abused notation by writing $\co$ as the pullback of $\co$ to $\B$).

\begin{lemma} $\B$ admits a $\C^*$-action, and $\xi_{\epsilon}$ can be chosen $S^1$-invariant. As such, $(\B,\co-\epsilon\xi_{\epsilon})$ is a test-configuration for $(B, [\omega]-\epsilon c_1(F))$.\end{lemma}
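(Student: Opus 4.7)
My plan is to verify the three defining properties of a test-configuration in turn, beginning with the $\mathbb{C}^*$-action.

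First I would lift the $\mathbb{C}^*$-action on $\mathcal{X}$ to $\mathcal{B}$. Since $C = \overline{\mathbb{C}^* \cdot p}$ is by construction $\mathbb{C}^*$-invariant, and $\nu_2\colon \mathcal{B}\to\mathcal{X}$ is the blowup along this invariant centre, the universal property of blowing up gives a unique lift of the $\mathbb{C}^*$-action making $\nu_2$ equivariant. This also makes the induced map $\pi\circ\nu_2\colon \mathcal{B}\to\mathbb{P}^1$ equivariant with respect to the standard action on $\mathbb{P}^1$, and flat since $\pi$ is flat and $\nu_2$ is a projective (bimeromorphic) morphism with flat pushforward of $\mathcal{O}_{\mathcal{B}}$ onto $\mathcal{O}_{\mathcal{X}}$; in any case flatness over the smooth curve $\mathbb{P}^1$ follows automatically from the fact that $\mathcal{B}$ is irreducible and no component is mapped to a point.

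Next I would produce the $S^1$-invariant form $\xi_{\epsilon}$. As stated in the setup, $\nu_2^*[\Omega] - \epsilon[E]$ is a Kähler class on $\mathcal{B}$ for $\epsilon$ small, so we may pick any smooth $\xi_{\epsilon}'\in c_1(E)$ with $\nu_2^*\Omega - \epsilon \xi_{\epsilon}'$ Kähler. Replacing $\xi_{\epsilon}'$ by its average $\xi_{\epsilon} := \int_{S^1} \sigma^* \xi_{\epsilon}' \,d\sigma$ over the compact group $S^1\subset \mathbb{C}^*$ yields an $S^1$-invariant representative of $c_1(E)$, and since $\Omega$ itself is $S^1$-invariant the form $\Omega - \epsilon \xi_{\epsilon}$ (again writing $\Omega$ for $\nu_2^*\Omega$) remains Kähler and is now $S^1$-invariant.

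Finally I would check the trivialisation condition away from $0\in\mathbb{P}^1$. Under the equivariant biholomorphism $\alpha\colon \mathcal{X}^\times \simeq X\times(\mathbb{P}^1\setminus\{0\})$, the invariant curve $C\cap \mathcal{X}^\times$ is carried to $\{p\}\times (\mathbb{P}^1\setminus\{0\})$ (since $C$ is the orbit closure of $p\in \mathcal{X}_1$ identified with $X$). Blowing up commutes with flat base change, and in particular with the inclusion of $\mathbb{P}^1\setminus\{0\}$, so $\nu_2^{-1}(\mathcal{X}^\times)$ is equivariantly biholomorphic to $B\times (\mathbb{P}^1\setminus\{0\})$, with the exceptional divisor $E$ pulling back to $F\times (\mathbb{P}^1\setminus\{0\})$. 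For any $t\neq 0$ this identifies $\mathcal{B}_t\cong B$ and $E|_{\mathcal{B}_t}$ with $F$, hence $[(\Omega - \epsilon\xi_{\epsilon})_t] = [\omega] - \epsilon c_1(F)$ as cohomology classes on $B$, verifying the last axiom of a test-configuration.

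The only potentially delicate point is the assertion that averaging preserves Kählerness, but this is immediate because the Kähler cone is open and convex in the $S^1$-invariant $(1,1)$-forms, and every $S^1$-translate of $\Omega - \epsilon \xi_{\epsilon}'$ is Kähler. No genuine obstacle arises; the content of the lemma is the packaging of well-known functorial properties of blowing up along equivariant centres together with the standard averaging trick used earlier in the paper (e.g.\ in the degeneration to the normal cone example).
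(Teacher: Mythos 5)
Your proof is correct and follows the same route as the paper, whose own argument is just the two observations that $C$ being $\C^*$-invariant lets the action lift to the blowup and that $\xi_\epsilon$ can be made $S^1$-invariant by averaging. The extra detail you supply (convexity of the K\"ahler cone justifying the averaging, and the check that $E$ restricts to $F$ on fibres away from $0$) is all sound and simply makes explicit what the paper leaves implicit.
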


\begin{proof} Since $C$ is $\C^*$-invariant, $\B$ automatically admits a $\C^*$-action. If $\xi$ is not $S^1$-invariant, it can be chosen to be so by averaging.\end{proof}

We now compute the Donaldson-Futaki invariant of this induced test-confi\-guration.

\begin{lemma} The volume term in the Donaldson-Futaki invariant changes as $$[\co-\epsilon\xi_{\epsilon}]^{n+1} = [\co]^{n+1}+(-\epsilon)^n(n+1)[\xi_{\epsilon}]^n.[\co] + O(\epsilon^{n+1}).$$ \end{lemma}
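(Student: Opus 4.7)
The plan is to expand by the binomial theorem and show that almost every intermediate term vanishes for dimensional reasons on the exceptional divisor. Write
$$[\co - \epsilon \xi_\epsilon]^{n+1} = \sum_{k=0}^{n+1} \binom{n+1}{k} (-\epsilon)^k\, [\xi_\epsilon]^k \cdot [\co]^{n+1-k},$$
so the task reduces to showing $[\xi_\epsilon]^k \cdot [\co]^{n+1-k} = 0$ for every $k$ with $1 \leq k \leq n-1$.

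Since $E$ is a smooth divisor in the smooth manifold $\B$ and $\xi_\epsilon \in c_1(\scO_{\B}(E))$, the class $[\xi_\epsilon]$ is Poincar\'e dual to $E$, and iterating the adjunction $\scO_\B(E)|_E \cong N_{E/\B}$ together with the projection formula gives, for $k \geq 1$ and any class $\alpha$ on $\B$,
$$[\xi_\epsilon]^k \cdot \alpha = \int_E c_1(N_{E/\B})^{k-1} \wedge \alpha|_E.$$
Now the abuse of notation in the statement identifies $\co$ on $\B$ with $\nu_2^*\co$, and the map $\nu_2|_E\colon E = \pr(N_{C/\X}) \to C$ exhibits $\co|_E = (\nu_2|_E)^*(\co|_C)$. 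Since $C$ is one-dimensional, $(\co|_C)^{n+1-k} = 0$ as soon as $n+1-k \geq 2$, i.e.\ for $k \leq n-1$. Hence $(\co|_E)^{n+1-k} = 0$ in this range, so all middle terms drop out.

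The only surviving summands are $k=0$, yielding $[\co]^{n+1}$; $k=n$, yielding $(n+1)(-\epsilon)^n [\xi_\epsilon]^n \cdot [\co]$; and $k=n+1$, yielding $(-\epsilon)^{n+1}[\xi_\epsilon]^{n+1}$, which is clearly $O(\epsilon^{n+1})$. Combining gives the claimed expansion. The only mild subtlety is justifying the reduction to an integral on $E$ in the K\"ahler setting, which is routine using the currents formalism recalled in Section \ref{sec:Kstabilitydefinition} (one may replace $\xi_\epsilon$ by the current of integration on $E$ up to a $\ddbar$-exact term); alternatively one may argue purely in cohomology via $c_1(\scO_\B(E)) = [E]$ and Gysin pushforward. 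No genuine obstacle arises.
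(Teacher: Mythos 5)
Your proof is correct and is precisely the argument the paper has in mind: the paper's own proof is the one-line remark that the claim ``is obvious using the projection formula, since $C$ is one-dimensional'', and your binomial expansion plus the vanishing of $[\xi_\epsilon]^k\cdot[\co]^{n+1-k}$ for $1\le k\le n-1$ (because $\co|_E$ is pulled back from the one-dimensional $C$) is exactly the fleshed-out version of that remark. Your closing caveat about passing between the smooth form $\xi_\epsilon$ and the current of integration on $E$ is also the same device the paper uses in its Lemma on $-[-\xi]^n.[\co]=\int_C\co$, so nothing is missing.
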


\begin{proof} This is obvious using the projection formula, since $C$ is one-dimensional.\end{proof}

 \begin{lemma}\label{c_1-of-blowup}The remaining term in the Donaldson-Futaki invariant changes as $$[\co-\epsilon\xi_{\epsilon}]^n.c_1(\B) = [c_1(\X)].[\co]^n + (-\epsilon)^{n-1}n(n-1)[\xi_{\epsilon}]^{n}.[\co] + O(\epsilon^n).$$ Here $[c_1(\X)].[\co]^n$ is calculated on $\X$ and the remaining integrals are calculated on $\B$.\end{lemma}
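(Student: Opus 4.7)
The plan is to expand $[\co-\epsilon\xi_\epsilon]^n \cdot c_1(\B)$ directly, using the blowup formula for $c_1(\B)$ together with the projection formula. Since $C$ is a smooth curve in the smooth $(n+1)$-fold $\X$, it has codimension $n$, so the standard formula for the canonical class of the blowup gives $K_\B = \nu_2^* K_\X + (n-1)[E]$ and hence
\[
c_1(\B) = \nu_2^*c_1(\X) - (n-1)[E].
\]
Substituting this and expanding $[\co-\epsilon\xi_\epsilon]^n$ binomially splits the intersection into a pulled-back part proportional to $\nu_2^*c_1(\X)$ and an exceptional part proportional to $[E]$.

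The central input is then the Segre-class vanishing $\nu_{2*}([E]^k)=0$ for $0<k<n$, together with the identification $\nu_{2*}([E]^n) = (-1)^{n-1}[C]$. These are standard consequences of $[E]|_E = -c_1(\scO_E(1))$ on the projective bundle $E = \pr(N_{C/\X}) \to C$ of relative dimension $n-1$, via the pushforward formula for projective bundles. Applied via the projection formula to the pulled-back part, only $k=0$ and $k=n$ survive: the $k=0$ term yields $c_1(\X)\cdot[\co]^n$ computed on $\X$, while $k=n$ contributes at order $\epsilon^n$. For the exceptional part the relevant powers become $[E]^{k+1}$, so only $k=n-1$ and $k=n$ contribute, and the former provides the leading $\epsilon^{n-1}$ correction through the combinatorial coefficient $-(n-1)\binom{n}{n-1}(-\epsilon)^{n-1}$. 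Recognising $[\xi_\epsilon]^n\cdot [\co] = \int_\B [E]^n\cdot \nu_2^*\co = (-1)^{n-1}\int_C\co$ then allows the leading correction to be rewritten in the compact intersection-theoretic form $(-\epsilon)^{n-1}n(n-1)[\xi_\epsilon]^n\cdot[\co]$.

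The main obstacle is careful sign bookkeeping: one must simultaneously track $(-1)^k$ from the binomial expansion, the sign $-(n-1)$ from the canonical class formula, and the Segre-class sign in $\nu_{2*}([E]^n)=(-1)^{n-1}[C]$, and verify that these combine to give precisely the claimed coefficient. Everything at higher order fits into the $O(\epsilon^n)$ remainder: in the pulled-back part this is the $k=n$ piece involving $\nu_{2*}([E]^n)\cdot c_1(\X)$, and in the exceptional part the $k=n$ piece involves $\nu_{2*}([E]^{n+1})$, which is expressible via Segre classes of $N_{C/\X}$ but is in any case an $\epsilon^n$ contribution.
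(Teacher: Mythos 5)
Your approach is exactly the paper's: the printed proof consists of the identity $c_1(\B)=\nu_2^*c_1(\X)-(n-1)[E]$ followed by an appeal to the projection formula, and your binomial expansion together with $\nu_{2*}([E]^k)=0$ for $0<k<n$ and $\nu_{2*}([E]^n)=(-1)^{n-1}[C]$ is precisely the computation being left to the reader. All of your intermediate steps are correct, including the identification of which terms survive and which are absorbed into $O(\epsilon^n)$.

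The one step that does not go through is the last sentence of your second paragraph. You correctly find that the order-$\epsilon^{n-1}$ contribution is
\[
-(n-1)\binom{n}{n-1}(-\epsilon)^{n-1}\,[E]^{n}\cdot\nu_2^*[\co]\;=\;-\,n(n-1)(-\epsilon)^{n-1}\,[\xi_\epsilon]^{n}\cdot[\co],
\]
and then assert that this ``rewrites'' as $+(-\epsilon)^{n-1}n(n-1)[\xi_\epsilon]^{n}\cdot[\co]$ --- but that is its negative, and no amount of further sign bookkeeping reconciles the two. In fact your sign is the correct one, and the displayed statement appears to carry a sign slip. Two independent checks: in the projective model of Section \ref{sec:stoppaproj} the analogous correction to $K_{\B/\pr^1}.(\scL-\epsilon E)^n$ enters with a $+$ precisely because $K_\B=\psi^*K_\X+(n-1)E$, and passing to $c_1=-K$ reverses it; and only the version with the minus sign, fed through Lemma \ref{KahlerIntersectionNumberForms} (which gives $[\xi_\epsilon]^n.[\co]=(-1)^{n-1}\int_C\co$) and the expansion of $\mu(Bl_pX,[\omega]-\epsilon c_1(F))$, reproduces the expansion $\DF(\X,\co)-n(n-1)\epsilon^{n-1}Ch_p(\X,\co)+O(\epsilon^n)$ of Proposition \ref{KahlerBlowupDF}. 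So keep your computation as it stands and record the discrepancy, rather than silently forcing your coefficient to agree with the one printed in the statement.
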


\begin{proof}
Remark that $[\xi_{\epsilon}]\in c_1(E)$. As $\B$ is the blow-up of $\X$ along the one-dimensional submanifold $C$, we have in cohomology $c_1(\B) = \nu_2^*c_1(\X) - (n-1)[E]$. The result then follows from the projection formula.
 \end{proof}
 
The final step in relating the Donaldson-Futaki invariant of $(\B,\co-\epsilon\xi)$ to integrals calculated on $\X$ is the following.
 
 \begin{lemma}\label{KahlerIntersectionNumberForms} $-[-\xi]^n.[\co] = \int_C\co.$\end{lemma}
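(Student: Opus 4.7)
The plan is to reduce the intersection number on $\B$ to an integral over the one-dimensional $C$, using the standard description of the cohomology of a blowup. First I would use that $[\xi] = c_1(E)$ and apply the projection formula for the inclusion $i\colon E \hookrightarrow \B$: since $[E]\cdot\alpha = i_*(i^*\alpha)$, iterating gives
\[
[\xi]^n \cdot [\co] \;=\; \int_{\B} \xi^n \wedge \nu_2^*\co \;=\; \int_E c_1(N_{E/\B})^{\,n-1} \wedge (\nu_2|_E)^*(\co|_C),
\]
where I continue to write $\co$ for its pullback to $\B$ and use that $\nu_2$ sends $E$ onto $C$.

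Next I would exploit the fact that $C$ has codimension $n$ in the $(n+1)$-dimensional $\X$, so $E = \mathbb{P}(N_{C/\X})$ is a $\mathbb{P}^{n-1}$-bundle $\pi_E\colon E \to C$ with $\nu_2|_E = \pi_E$, and its normal bundle is the tautological line bundle $\mathcal{O}_{\mathbb{P}(N)}(-1)$. Thus $c_1(N_{E/\B}) = -h$ with $h := c_1(\mathcal{O}_{\mathbb{P}(N)}(1))$. The classical identity $\pi_{E*}(h^{n-1}) = 1$ (top Segre class of the rank-$n$ bundle $N_{C/\X}$ restricted to the curve $C$) combined with the projection formula yields
\[
\int_E h^{n-1} \wedge \pi_E^*(\co|_C) \;=\; \int_C \co.
\]

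Collecting signs:
\[
[\xi]^n \cdot [\co] \;=\; (-1)^{n-1}\int_C \co, \qquad -[-\xi]^n \cdot [\co] \;=\; (-1)^{n+1}[\xi]^n\cdot[\co] \;=\; \int_C\co,
\]
which is the claim. The argument is really just bookkeeping of signs on top of the classical blowup formulas, so there is no genuine obstacle; the only point to double-check is that the identities of intersection theory apply to our smooth $(1,1)$-forms, which is immediate since everything takes place on the smooth manifolds $\B$ and $E$ and the intersection numbers are literal integrals of wedge products. (One could alternatively argue via Poincar\'e duality on $E$ and the classical formula $c(N_{C/\X}) = \pi_{E*}(1/(1+h))$, but the computation above is the most direct.)
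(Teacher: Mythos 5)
Your computation is correct, but it follows a genuinely different route from the paper's. The paper exploits that $C\cong\pr^1$ is projective: it writes $\co|_C = c_1(H)$ for an $\R$-line bundle $H$ (possible since every K\"ahler class on a curve is a positive multiple of an integral one), observes that $E$ is a projective bundle over $C$ and hence itself projective, and then reduces the desired identity to the algebraic statement $(-E|_E)^{n-1}.(\alpha^*H)=\deg H$, which is exactly the standard projective intersection formula already recorded as Lemma \ref{projective-intersections}, extended from $\Q$- to $\R$-line bundles by continuity. You instead compute directly on the blowup: pushing $[\xi]^n\cdot[\co]$ to $E$ via $[E]\cdot\beta = i_*(i^*\beta)$, identifying $N_{E/\B}=\mathcal{O}_{\mathbb{P}(N_{C/\X})}(-1)$ so that $c_1(N_{E/\B})=-h$, and invoking the Segre-class identity $\pi_{E*}(h^{n-1})=s_0(N_{C/\X})=1$; the signs then cancel as you say, since $-[-\xi]^n\cdot[\co]=(-1)^{n+1}(-1)^{n-1}\int_C\co=\int_C\co$. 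Your argument is more self-contained (no appeal to projectivity of $C$ or $E$, no $\R$-line-bundle limiting step) and generalizes verbatim to blowups along higher-dimensional $\C^*$-invariant centres, which is relevant to the paper's later remark on the higher-dimensional case; the paper's version is shorter given that Lemma \ref{projective-intersections} is already on the table. The one point worth making explicit in your write-up is the convention fixing $N_{E/\B}$ as the tautological subbundle $\mathcal{O}(-1)$ and $\pi_{E*}(h^{n-1})=1$ for the rank-$n$ bundle $N_{C/\X}$, since a mismatch of projectivization conventions is the only place a sign could go wrong; with the standard (Fulton) conventions you have used, everything checks out.
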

 
 \begin{proof} $\co|_C$ is in $\Amp_{\R}(C)$, since $C$ is one-dimensional and hence every K\"ahler class is a limit of classes of ample $\Q$-line bundles. Fix an $\R$-line bundle $H$ on $C$ such that $\co|_C\in c_1(H)$. Let $\alpha: E\to C$ be the map induced from the blow-up. Remark that $E$ is projective, as it is the projective bundle over $C$. Moreover $E$ gives a line bundle on $\B$, hence by restriction a line bundle on $E$ which we denote (abusing notation) by $E$. Then what we wish to prove is $$(-E|_E)^{n-1}.(\alpha^*H) = \deg H,$$ where the intersection number on the left hand side is computed on $E$ and $\deg H$ is computed on $C$. This is a standard argument using intersection theory for projective varieties when $H$ is a $\Z$ or $\Q$-line bundle as in Lemma \ref{projective-intersections}, and therefore extends to $\R$-line bundles by continuity.\end{proof}

Combining the previous lemmas gives the following.
 
\begin{proposition}\label{KahlerBlowupDF} The Donaldson-Futaki invariant of $(\Y,\co-\epsilon \xi_{\epsilon})$ satisfies $$\DF(\Y,\co-\epsilon \xi_{\epsilon}) = \DF(\X,\co) - n(n-1)\epsilon^{n-1}Ch_p(\X,\co)+ O(\epsilon^n).$$\end{proposition}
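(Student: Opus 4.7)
The plan is to substitute directly into the definition of the Donaldson--Futaki invariant for the blow-up test-configuration and then expand each piece in $\epsilon$ using the three lemmas already established (the volume change, the change of $c_1$, and the relation $-[-\xi_\epsilon]^n\cdot[\Omega]=\int_C\Omega$). Since $(\B,\Omega-\epsilon\xi_\epsilon)$ is a test-configuration for $(Bl_pX,[\omega]-\epsilon c_1(F))$, we have
\begin{align*}
\DF(\B,\Omega-\epsilon\xi_\epsilon) &= \tfrac{n}{n+1}\mu_\epsilon\,[\Omega-\epsilon\xi_\epsilon]^{n+1}\\
&\quad - (c_1(\B)-\pi_\B^*c_1(\pr^1))\cdot[\Omega-\epsilon\xi_\epsilon]^n,
\end{align*}
where $\mu_\epsilon:=\mu(Bl_pX,[\omega]-\epsilon c_1(F))$. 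The first step is to expand $\mu_\epsilon$. Because the exceptional divisor $F$ contracts to a point, the projection formula gives $[\omega]^{n-k}\cdot c_1(F)^k=0$ for $1\le k\le n-1$, so only the top self-intersection of $c_1(F)$ contributes; a direct calculation then yields
\[
\mu_\epsilon = \mu(X,\omega) - \frac{(n-1)\epsilon^{n-1}}{[\omega]^n} + O(\epsilon^n).
\]

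The second step assembles the three lemmas. The volume lemma gives $[\Omega-\epsilon\xi_\epsilon]^{n+1}=[\Omega]^{n+1}+O(\epsilon^n)$, so the slope correction yields
\[
\tfrac{n}{n+1}\mu_\epsilon[\Omega-\epsilon\xi_\epsilon]^{n+1}
=\tfrac{n}{n+1}\mu(X,\omega)[\Omega]^{n+1}-\tfrac{n(n-1)\epsilon^{n-1}}{(n+1)[\omega]^n}[\Omega]^{n+1}+O(\epsilon^n).
\]
Lemma \ref{c_1-of-blowup} combined with the fact that $\pi_\B^*c_1(\pr^1)\cdot[\Omega-\epsilon\xi_\epsilon]^n=\pi^*c_1(\pr^1)\cdot[\Omega]^n+O(\epsilon^n)$ (the pullback is a fibre class and the relative volume changes only at order $\epsilon^n$) gives
\[
(c_1(\B)-\pi_\B^*c_1(\pr^1))\cdot[\Omega-\epsilon\xi_\epsilon]^n
=(c_1(\X)-\pi^*c_1(\pr^1))\cdot[\Omega]^n+(-\epsilon)^{n-1}n(n-1)[\xi_\epsilon]^n\cdot[\Omega]+O(\epsilon^n).
\]

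The third step converts the intersection on $\B$ back to an integral over $C$. By Lemma \ref{KahlerIntersectionNumberForms}, $[\xi_\epsilon]^n\cdot[\Omega]=(-1)^{n-1}\int_C\Omega$, so $(-\epsilon)^{n-1}[\xi_\epsilon]^n\cdot[\Omega]=\epsilon^{n-1}\int_C\Omega$. Subtracting the two contributions produces
\[
\DF(\B,\Omega-\epsilon\xi_\epsilon)
=\DF(\X,\Omega)-n(n-1)\epsilon^{n-1}\!\left(\frac{[\Omega]^{n+1}}{(n+1)[\omega]^n}-\int_C\Omega\right)+O(\epsilon^n),
\]
and the bracketed expression is by definition $Ch_p(\X,\Omega)$. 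The main obstacle, modest as it is, is keeping careful track of signs and of which corrections are of order $\epsilon^{n-1}$ (the slope shift and the $c_1$ correction) versus order $\epsilon^n$ (the volume correction and the pullback term from $\pr^1$); once this is bookkept correctly the identity drops out from the three lemmas directly.
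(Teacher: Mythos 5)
Your strategy is exactly the paper's: its own proof of this proposition is a one-line assembly of the preceding lemmas together with the expansion of the slope $\mu(Bl_pX,[\omega]-\epsilon c_1(F))$, and your expansion of $\mu_\epsilon$ and your treatment of the $\pi_\B^*c_1(\pr^1)$ term are both correct. However, as written your displayed equations do not combine to give your final formula: there is a sign discrepancy in the $\int_C\co$ term, and this sign is precisely what the whole blow-up argument hinges on. Concretely, you quote Lemma \ref{c_1-of-blowup} verbatim, with correction term $+(-\epsilon)^{n-1}n(n-1)[\xi_\epsilon]^n\cdot[\co]$, and you correctly deduce from Lemma \ref{KahlerIntersectionNumberForms} that $(-\epsilon)^{n-1}[\xi_\epsilon]^n\cdot[\co]=\epsilon^{n-1}\int_C\co$. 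Feeding these into $\DF=\tfrac{n}{n+1}\mu_\epsilon[\co-\epsilon\xi_\epsilon]^{n+1}-(c_1(\B)-\pi_\B^*c_1(\pr^1))\cdot[\co-\epsilon\xi_\epsilon]^n$ yields
\[
\DF(\B,\co-\epsilon\xi_\epsilon)=\DF(\X,\co)-n(n-1)\epsilon^{n-1}\left(\frac{[\co]^{n+1}}{(n+1)[\omega]^n}+\int_C\co\right)+O(\epsilon^n),
\]
with a plus sign in the bracket, not the minus sign appearing in $Ch_p(\X,\co)$.

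The resolution is that the correction term in Lemma \ref{c_1-of-blowup} should carry the opposite sign. Since $c_1(\B)=\nu_2^*c_1(\X)-(n-1)[E]$, the only cross term surviving at order $\epsilon^{n-1}$ after pushing forward to $\X$ is
\[
\binom{n}{n-1}\,\nu_2^*[\co]\cdot(-\epsilon[\xi_\epsilon])^{n-1}\cdot\bigl(-(n-1)[\xi_\epsilon]\bigr)=-n(n-1)(-\epsilon)^{n-1}[\xi_\epsilon]^n\cdot[\co]=-n(n-1)\epsilon^{n-1}\int_C\co,
\]
which enters $\DF$ with an overall $+$ sign and produces the desired $-\int_C\co$ inside $Ch_p(\X,\co)$. (The same slip is present in the paper's statement of that lemma: the coefficient $+n(n-1)(-\epsilon)^{n-1}$ is correct for the expansion of $K_{\B/\pr^1}\cdot(\scL-\epsilon E)^n$ in the projective discussion, where the exceptional divisor enters the canonical class with a $+$ sign, but not for $c_1(\B)$, where it enters with a $-$ sign.) So your final formula and your method are the paper's, but the intermediate bookkeeping needs this one sign repaired before the identity actually drops out of the three lemmas.
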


\begin{proof} All that remains is to compute the slope of $(\B,[\omega-\epsilon \zeta])$ in terms of the slope of $(X,[\omega])$. This follows easily as above.\end{proof}

Now we relax the smoothness assumption, so let $(\X,\co)$ be an arbitary test-configuration and $p\in X$. Denote by $C$ the closure $\overline{\C^*.p}$.   Take a resolution of singularities $p: \Y\to\X$ such that the proper transform $\hat C$ of $C$ is smooth. Then $p^*\co$ is semi-positive, and K\"ahler away from $\X_0$. Set $b: \B = Bl_{\hat C} \Y\to \Y$ to be the blow-up, so that we have a diagram as follows.

\[
\begin{tikzcd}
\B \arrow[swap]{d}{b} \arrow{dr}{q=p \circ b} &  \\
\Y \arrow{r}{p} & \X
\end{tikzcd}
\]

Remark that $\B$ is a smooth K\"ahler manifold. Let $E$ be the exceptional divisor of $\Y\to\X$ and $\hat E$ the exceptional divisor of $\B\to\Y$. Take $\epsilon>0$ and $\xi_{\epsilon}\in c_1(\hat E)$ and $\alpha_{\epsilon} \in c_1(E)$ such that $q^*\co-\epsilon\xi_{\epsilon}-\epsilon^{n} b^*\alpha_{\epsilon}$ is K\"ahler on $\B$. We can, and do, assume that $q^*\co-\epsilon\xi_{\epsilon}-\epsilon^{n} b^*\alpha_{\epsilon}$ is $S^1$-invariant. Remark that $E$ has support in the central fibre of the test-configuration, so in cohomology we have $$[(q^*\co-\epsilon\xi_{\epsilon}-\epsilon^{n} b^*\alpha_{\epsilon})_t]\cong [\omega]-\epsilon c_1(F).$$ Then $(\B,q^*\co-\epsilon\xi_{\epsilon}-\epsilon^{n} b^*\alpha_{\epsilon})$ is a test-configuration for $(Bl_pX,[\omega]-\epsilon c_1(F))$. The following is immediate.

\begin{lemma} We have $\DF(\B,q^*\co - \epsilon\xi_{\epsilon}-\epsilon^{n} b^*\alpha_{\epsilon})\geq 0$. Moreover, setting $\mu = \mu(Bl_pX,[\omega]-\epsilon c_1(F))$, the Donaldson-Futaki invariant $$\DF:=\DF(\B,q^*\co - \epsilon \xi_{\epsilon}-\epsilon^{n} b^*\alpha_{\epsilon})$$ is given as 
\begin{align*} \DF= \frac{n}{n+1}\mu[q^*\co-\epsilon\xi_{\epsilon}]^{n+1} - [c_1(\B) - (\pi\circ p)^*c_1(\pr^1)].[q^*\co-\epsilon\xi_{\epsilon}]^n + O(\epsilon^{n}).\end{align*}
 \end{lemma}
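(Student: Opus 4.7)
The lemma makes two assertions, which I would handle separately.

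For the nonnegativity $\DF(\B, q^*\co - \epsilon\xi_\epsilon - \epsilon^n b^*\alpha_\epsilon) \geq 0$, the plan is to invoke the K-semistability statement already established in Corollary \ref{cor:K-semistability}. The triple $(\B, q^*\co - \epsilon\xi_\epsilon - \epsilon^n b^*\alpha_\epsilon)$ is a test-configuration for the K\"ahler manifold $(Bl_pX, [\omega] - \epsilon c_1(F))$ by construction. Since $(X,[\omega])$ admits a cscK metric and has discrete automorphism group, Theorem \ref{Arezzo-Pacard} of Arezzo-Pacard provides a cscK metric in the class $[\omega] - \epsilon c_1(F)$ on $Bl_pX$ for all sufficiently small $\epsilon > 0$. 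By the theorem of Berman-Berndtsson as used in Corollary \ref{cor:K-semistability}, this forces $(Bl_pX, [\omega] - \epsilon c_1(F))$ to be K-semistable, which gives the desired inequality.

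For the asymptotic formula, I would start from the definition of the Donaldson-Futaki invariant on the smooth total space $\B$:
\[
\DF = \frac{n}{n+1}\mu [q^*\co - \epsilon\xi_\epsilon - \epsilon^n b^*\alpha_\epsilon]^{n+1} - (c_1(\B) - (\pi\circ p)^*c_1(\pr^1)).[q^*\co - \epsilon\xi_\epsilon - \epsilon^n b^*\alpha_\epsilon]^n,
\]
where $\mu = \mu(Bl_pX, [\omega] - \epsilon c_1(F))$ is the slope of the fibre, as required by Definition \ref{def:DFI}. The key observation is that both intersection products are multilinear in their arguments, so expanding the two brackets produces a sum of terms each of which is a product of the three classes $[q^*\co]$, $[\xi_\epsilon]$, $[b^*\alpha_\epsilon]$, with the third appearing only with coefficient a power of $\epsilon^n$. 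Consequently, every term of the expansion that involves at least one factor of $b^*\alpha_\epsilon$ is a scalar multiple of at least $\epsilon^n$.

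To conclude that these correction terms are $O(\epsilon^n)$ uniformly in $\epsilon$, I would observe that the cohomology class $[\alpha_\epsilon] = c_1(E)$ is fixed and independent of $\epsilon$, so the numerical intersection numbers appearing as the scalar coefficients are bounded by quantities independent of $\epsilon$; similarly $[\xi_\epsilon] = c_1(\hat E)$ is fixed. Collecting the terms containing no factor of $b^*\alpha_\epsilon$ gives precisely
\[
\frac{n}{n+1}\mu[q^*\co - \epsilon\xi_\epsilon]^{n+1} - (c_1(\B) - (\pi\circ p)^*c_1(\pr^1)).[q^*\co - \epsilon\xi_\epsilon]^n,
\]
with all remaining terms absorbed into $O(\epsilon^n)$, which is the claimed expansion. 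The only mild subtlety, which I expect to be the main point of care, is ensuring uniformity in $\epsilon$ of the error; but this is automatic since only fixed cohomology classes enter the relevant intersection numbers.
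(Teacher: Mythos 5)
Your proposal is correct and is exactly the argument the paper has in mind (the paper simply declares the lemma ``immediate''): nonnegativity follows from Theorem \ref{Arezzo-Pacard} together with Corollary \ref{cor:K-semistability} applied to the test-configuration $(\B, q^*\co-\epsilon\xi_\epsilon-\epsilon^n b^*\alpha_\epsilon)$ for $(Bl_pX,[\omega]-\epsilon c_1(F))$, and the expansion follows from multilinearity of the intersection product, with every term containing a factor $\epsilon^n b^*\alpha_\epsilon$ absorbed into $O(\epsilon^n)$ uniformly because $[\alpha_\epsilon]=c_1(E)$ and $[\xi_\epsilon]=c_1(\hat E)$ are fixed classes. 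Your attention to uniformity of the error term is a sensible (and correct) addition.
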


Just as with the minimum norm proven in Proposition \ref{canworkonres}, one can calculate the Chow weight $Ch_p(\X,\co)$ on a resolution of singularities.

\begin{lemma} We have $$Ch_p(\X,\co) = \frac{[p^*\co]^{n+1}}{(n+1)[\omega]^n} - \int_{\hat C}p^*\co.$$\end{lemma}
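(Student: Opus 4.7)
My plan is to reduce the claim to two independent compatibilities under the resolution $p\colon \Y\to\X$: namely $[p^*\co]^{n+1} = [\co]^{n+1}$ and $\int_{\hat C} p^*\co = \int_C \co$. Once both are established, the stated identity follows immediately from the definition of $Ch_p(\X,\co)$.

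For the first equality I would simply appeal to the remark following Lemma \ref{lem:IndOfResSing}: since $p$ is an isomorphism away from the central fibre (which has real codimension two in $\X$) and the singular locus of $\X$ is contained there, one has $[p^*\co]^{n+1}=\int_{\X_{\reg}}\co^{n+1}$, and this equals $[\co]^{n+1}$ because $\co$ is a smooth form on the normal analytic space $\X$ and $\X_{\sing}$ is a null set. No serious work is needed here.

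For the second equality, the key observation is that $\hat C$ is by construction the closure in $\Y$ of $p^{-1}(C\setminus\X_0)$, and that $p$ restricts to a biholomorphism on $\Y\setminus \Y_0\cong \X\setminus \X_0$. Hence $p|_{\hat C}\colon \hat C\to C$ is a proper bimeromorphic morphism of one-dimensional reduced analytic spaces, which restricts to an isomorphism between the open subsets $\hat C\setminus p^{-1}(\X_0)$ and $C\setminus \X_0$. Since $C\cap \X_0$ is a finite set of points (the $\C^*$-orbit closure meets each fibre in at most finitely many points), it has zero measure inside $C$, and likewise for its preimage in $\hat C$. Interpreting $\int_C \co$ as the integral of $\iota_C^*\co$ over $C_{\reg}$ (equivalently over the normalisation of $C$), the change of variables formula gives
$$\int_{\hat C} p^*\co \;=\; \int_{\hat C\setminus p^{-1}(\X_0)} p^*\co \;=\; \int_{C\setminus \X_0} \co \;=\; \int_C \co.$$

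The only mildly delicate point—and what I would expect to be the main thing to check carefully—is the well-definedness of $\int_C \co$ when $C\cap\X_0$ may be a singular point of $C$ itself; but since $C$ is one-dimensional and reduced, this is resolved by pulling back to the normalisation, which agrees birationally with $\hat C$ and makes the above chain of equalities unambiguous.
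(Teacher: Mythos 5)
Your proof is correct and follows essentially the same route as the paper: the authors reduce to showing $\int_C\co=\int_{\hat C}p^*\co$ and establish it exactly as you do, by noting that $p$ restricts to an isomorphism $\hat C\setminus p^{-1}(q)\cong C\setminus\{q\}$ for $q=C\cap\X_0$ and that the finitely many removed points do not affect the integral of the smooth form $\co$. Your additional care with $[p^*\co]^{n+1}=[\co]^{n+1}$ (via the remark after Lemma \ref{lem:IndOfResSing}) and with the singularities of $C$ at $q$ is consistent with what the paper leaves implicit.
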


\begin{proof} It suffices to show $$\int_C \co =  \int_{\hat C}p^*\co,$$ where $\hat C$ is the proper transform of $C$ under the map $p:\Y\to\X$. Denote $q = C\cap\X_0$. Then since $\co$ is smooth, we have \begin{align*}\int_C \co &= \int_{C\setminus \{q\}} \co, \\ &= \int_{\hat C\setminus \{p^{-1}(q)\}}\co, \\ & =\int_{\hat C}p^*\co, \end{align*} where we have used that $p: \hat C\setminus \{p^{-1}(q)\} \cong C\setminus \{q\}$ is an isomorphism away from $p^{-1}(q)$.
\end{proof}

Remark that $\B\to\Y$ is a map of smooth K\"ahler manifolds, which is the blow-up along the smooth curve $\hat C$. The Donaldson-Futaki invariant of $(\X,\co)$ is computed on $\Y$, by definition. Thus Proposition \ref{KahlerBlowupDF} gives the following.

\begin{proposition}\label{singularKahlerBlowupDF} The Donaldson-Futaki invariant of $(\B,q^*\co-\epsilon \xi_{\epsilon}-\epsilon^{n} \alpha_{\epsilon})$ satisfies $$\DF(\B,q^*\co-\epsilon \xi_{\epsilon}-\epsilon^{n} \alpha_{\epsilon}) = \DF(\X,\co) - n(n-1)\epsilon^{n-1}Ch_p(\X,\co)+ O(\epsilon^n).$$\end{proposition}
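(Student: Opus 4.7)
The plan is to reduce to the smooth-case formula Proposition~\ref{KahlerBlowupDF} applied with $(\Y,p^*\co)$ in place of $(\X,\co)$ and with $\hat C$ in place of $C$. The key observation is that although Proposition~\ref{KahlerBlowupDF} is stated under the hypothesis that $\X$, $C$ and $\co$ are smooth with $\co$ K\"ahler, each step in its proof — the volume expansion via the projection formula, Lemma~\ref{c_1-of-blowup} giving $c_1(\B)=b^*c_1(\Y)-(n-1)[\hat E]$, Lemma~\ref{KahlerIntersectionNumberForms} identifying the top self-intersection of $-\hat E$ paired with $[p^*\co]$ with $\int_{\hat C}p^*\co$, and the slope expansion $\mu(Bl_pX,[\omega]-\epsilon c_1(F))=\mu(X,[\omega])+O(\epsilon^{n-1})$ — is purely intersection-theoretic and never uses positivity of the base form. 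Since $\Y$ and $\hat C$ are both smooth by the choice of resolution, the conclusion of Proposition~\ref{KahlerBlowupDF} applies verbatim, yielding
$$\DF(\B,q^*\co-\epsilon\xi_\epsilon)=\DF(\Y,p^*\co)-n(n-1)\epsilon^{n-1}\!\left(\frac{[p^*\co]^{n+1}}{(n+1)[\omega]^n}-\int_{\hat C}p^*\co\right)+O(\epsilon^n).$$
By Lemma~\ref{lem:IndOfResSing} the first term on the right equals $\DF(\X,\co)$, and by the unlabelled lemma immediately preceding the statement the parenthesised quantity equals $Ch_p(\X,\co)$.

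Second, to incorporate the additional correction $-\epsilon^n b^*\alpha_\epsilon$, I would expand both $[q^*\co-\epsilon\xi_\epsilon-\epsilon^n b^*\alpha_\epsilon]^{n+1}$ and its companion intersection against $c_1(\B)-(\pi q)^*c_1(\pr^1)$ via the multinomial theorem. Every monomial in either expansion that involves $b^*\alpha_\epsilon$ inherits at least one factor of $\epsilon^n$, so its total contribution to the Donaldson--Futaki invariant is $O(\epsilon^n)$. Modulo this error we may drop the $\epsilon^n b^*\alpha_\epsilon$ term entirely, reducing the left-hand side of the proposition to $\DF(\B,q^*\co-\epsilon\xi_\epsilon)$ and completing the argument.

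The only genuine obstacle is bookkeeping at the critical order $\epsilon^{n-1}$: one must verify that the slope shift $-\tfrac{(n-1)\epsilon^{n-1}}{[\omega]^n}$ and the $c_1(\B)$-correction from Lemma~\ref{c_1-of-blowup} combine to produce precisely the Chow-weight factor $-n(n-1)\epsilon^{n-1}Ch_p(\X,\co)$ as in the smooth case; that neither the $\epsilon^n b^*\alpha_\epsilon$ correction nor the $\epsilon$-dependence of $\xi_\epsilon$ (already permitted in the smooth statement) contributes at order $\epsilon^{n-1}$; and that the cross-term produced by multiplying the $O(\epsilon^{n-1})$ slope shift against the $O(\epsilon^n)$ correction in the volume expansion is itself absorbed in $O(\epsilon^n)$. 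Once organised, no further analytic input is needed beyond what was already used in the smooth proof.
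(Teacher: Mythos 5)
Your proposal is correct and follows essentially the same route as the paper: the paper likewise discards the $\epsilon^{n} b^*\alpha_{\epsilon}$ correction as an $O(\epsilon^n)$ error, applies the smooth-case computation of Proposition \ref{KahlerBlowupDF} to the blow-up of the smooth $\Y$ along the smooth $\hat C$ with the semi-positive form $p^*\co$, and then identifies $\DF(\Y,p^*\co)$ with $\DF(\X,\co)$ and the Chow weight computed on the resolution with $Ch_p(\X,\co)$. The only difference is the order of the two reductions, which is immaterial.
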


\subsection{Existence of a destabilising point}\label{existence-of-point-section}
 
As in the last section, we fix a test-configuration $(\X,\co)$ for $(X,[\omega])$. The goal of this section is to prove the following.

\begin{proposition}\label{prop-point} Provided $\|(\X,\co)\|_m>0$, there exists a point $p\in X$ satisfying $$Ch_p(\X,\co)>0.$$\end{proposition}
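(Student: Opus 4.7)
The plan is to argue by contradiction. Suppose $Ch_p(\mathcal X,\Omega) \leq 0$ for every $p\in X$; I will aim to deduce $\|(\mathcal X,\Omega)\|_m = 0$, contradicting the hypothesis. My first step is to apply Lemma~\ref{normalisingtheDF} to normalise $\Omega$ by adding $c\pi^*\eta$ for an appropriate $c\in\mathbb R$ (taking $\eta\in c_1(\mathcal O_{\mathbb P^1}(1))$ with $\int_{\mathbb P^1}\eta=1$) so that $[\Omega]^{n+1}=0$. Both $\|(\mathcal X,\Omega)\|_m$ and each Chow weight $Ch_p$ are invariant under this shift: the former by Lemma~\ref{normalisingtheDF}, and the latter because $\pi$ restricts to an isomorphism $C_p\to\mathbb P^1$ (every $p\in X$ has a one-dimensional orbit since the $\mathbb C^*$-fixed locus lies in $\mathcal X_0\cup\mathcal X_\infty$), so $\int_{C_p}\pi^*\eta = \int_{\mathbb P^1}\eta = 1$, which cancels the change in $[\Omega]^{n+1}/((n+1)[\omega]^n)$ term by term. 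With this normalisation one has $Ch_p=-F(p)$ for $F(p):=\int_{C_p}\Omega$, so the hypothesis becomes $F(p)\geq 0$ for every $p\in X$.

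Next I would derive the identity $\int_X F(p)\,\omega^n = \|(\mathcal X,\Omega)\|_m$. Using a resolution of indeterminacy $g:\mathcal Y\to\mathcal X$ and $q:\mathcal Y\to X\times\mathbb P^1$ of the natural bimeromorphic map, and the projection formula along $p_X\circ q:\mathcal Y\to X$ (whose generic fibre parametrises $C_p$), the left-hand side equals $[g^*\Omega]\cdot [q^*\omega]^n$; under the normalisation $[\Omega]^{n+1}=0$ this coincides with $\|(\mathcal X,\Omega)\|_m$ by Definition~\ref{def:minnormanalytic}. Combined with $\|(\mathcal X,\Omega)\|_m>0$ we get $\int_X F\omega^n>0$, which is consistent with $F\geq 0$ and therefore does not immediately yield a contradiction by pure averaging.

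To extract the contradiction I would exploit the Hamiltonian interpretation of $F$. Since $\Omega$ is $S^1$-invariant and the $\mathbb C^*$-action is trivial on $\mathcal X_\infty\cong X$ (via the trivialisation $\alpha$), the moment map $H:\mathcal X\to\mathbb R$ with $dH=-\iota_V\Omega$ must be constant, say equal to $h_\infty$, on $\mathcal X_\infty$. For each $p\in X$, integrating $\Omega$ along the orbit closure $C_p$, whose endpoints are $p_0=\lim_{t\to 0}\rho(t)\cdot p\in\mathcal X_0$ and $p_\infty\in\mathcal X_\infty$, gives $F(p)=h_\infty - H(p_0)$. The contradiction hypothesis thus reads $H(p_0)\leq h_\infty$ on every attracting fixed-point component of $\mathcal X_0$.

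The main obstacle, and the delicate step, is to show that this uniform bound on $H$, combined with the fibrewise K\"ahlerness of $\Omega$ (preserved by the normalisation because $\pi^*\eta|_{\mathcal X_t}=0$) and $[\Omega]^{n+1}=0$, forces $\|(\mathcal X,\Omega)\|_m=0$. This is the K\"ahler analogue of the observation in Section~\ref{sec:stoppaproj} that a nef, non-big polarisation on a test-configuration must be trivial. My plan would be to combine $F\geq 0$ on all orbit closures with the fibrewise positivity and the decomposition of an arbitrary irreducible curve in $\mathcal X$ into its horizontal (multi-section) and vertical (fibre) parts in $H_2(\mathcal X)$, to argue that $[g^*\Omega]$ is nef on $\mathcal Y$; then a Khovanskii--Teissier log-concavity argument applied to the nef class $[g^*\Omega]$ with $[g^*\Omega]^{n+1}=0$ against the big-nef pull-back $[q^*\omega]$ on $\mathcal Y$ should yield $[g^*\Omega]\cdot [q^*\omega]^n=0$, contradicting $\|(\mathcal X,\Omega)\|_m>0$. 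The subtle point is to make this positivity inequality rigorous in the analytic setting, where $g^*\Omega$ is only semi-positive on the resolution and a direct current-theoretic or regularisation argument is required.
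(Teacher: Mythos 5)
Your setup (normalising so that $[\Omega]^{n+1}=0$, checking that $Ch_p$ and the minimum norm are unchanged, and the identity $\int_X \bigl(\int_{C_p}\Omega\bigr)\,\omega^n = [g^*\Omega].[q^*\omega]^n = \|(\X,\co)\|_m$) is correct, and you rightly observe that averaging alone cannot produce the contradiction. But the final step, which is where all the content lies, has a genuine gap — in fact two. First, deducing nefness of $[g^*\Omega]$ from $\int_{C_p}\Omega\ge 0$ requires knowing that every curve class is dominated by non-negative combinations of $\C^*$-invariant curves; in the projective case this is a cone-of-curves statement, and in the K\"ahler category it needs a degeneration argument (flowing a curve to an invariant cycle via the Douady space) that you do not supply. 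Second, and more seriously, even granting nefness the Khovanskii--Teissier step fails: writing $s_i=[g^*\Omega]^i.[q^*\omega]^{n+1-i}$, log-concavity gives $s_i^2\ge s_{i-1}s_{i+1}$, and the vanishing $s_{n+1}=[g^*\Omega]^{n+1}=0$ propagates \emph{no} information down to $s_1$ (e.g.\ a nef class pulled back from an $n$-dimensional base has $s_{n+1}=0$ but $s_1>0$). Indeed, after your normalisation every test-configuration with positive minimum norm is precisely a class with $s_{n+1}=0$ and $s_1>0$, so no purely numerical inequality can close the argument. The implication ``nef and not big $\Rightarrow$ trivial test-configuration'' is stated in Section \ref{sec:stoppaproj} of the paper only as something that \emph{should} be provable; the authors explicitly decline to carry it out, even in the projective case.

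The paper's actual proof takes a different, analytic route that you should compare with. It identifies $\int_{C_p}\Omega$ with $\lim_{\tau\to\infty}\dot\theta_\tau(p)$ (Lemma \ref{0-dimensionalKempfNess} — the rigorous version of your Hamiltonian interpretation), so the contradiction hypothesis becomes $\lim_\tau\dot\theta_\tau(p)\ge 0$ for all $p$, while the normalisation gives $\lim_\tau\int_X\dot\theta_\tau\,\omega_{\theta_\tau}^n=0$. To interchange the limit with the integral it passes to the geodesic $\psi_\tau$ associated to the test-configuration, whose convexity in $\tau$ supplies the domination needed for the Dominated Convergence Theorem; this yields $\|(\X,\co)\|_1=0$. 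The contradiction then comes from Proposition \ref{L1DominatesMinimum}, which bounds the minimum norm by the $L^1$-norm using the Darvas--Rubinstein comparison between the Finsler distance $d_1$ and the $J$-functional. If you want to salvage your approach you would need to prove the positivity statement the paper leaves open; otherwise the geodesic/$L^1$-norm argument is the one that actually closes the proof.
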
 

\begin{remark} Due to pathological examples of Li-Xu \cite[Section 8.2]{LiXu}, we require the assumption that $\|(\X,\co)\|_m>0$ (rather than the \emph{a priori} weaker assumption that $(\X,\co)$ is not equivariantly isomorphic to $(X\times\pr^1,p_X^*\omega)$ for some $\omega$). We refer to \cite{JS-note} for a discussion of this requirement.\end{remark}

To prove this it will be useful to normalise such that $[\co]^{n+1}=0$, using Lemma \ref{normalisingtheDF} (that is we replace $\co$ by $\co + \pi^*\omega_{FS}$ which does not change the Donaldson-Futaki invariant or minimum norm, but it should be noted that $\co$ will no longer be K\"ahler). Recall by Theorem \ref{VolumeIsAM} that we have $$\lim_{\tau \to \infty} \frac{d}{d\tau}\AM(\theta_{\tau}) = [\co]^{n+1},$$ where as usual $\rho(t)^*\co_t =\co_1 + i \ddbar \varphi_t$ with $\varphi_t$ are normalised as in Definition \ref{def:normalisationofpotentials}, $\tau = -\log |t|^2$ and $\theta_{\tau} = \varphi_t$. The following is a simple calculation.

\begin{lemma}\label{AubinMabuchiNormalisation}The derivative of the Aubin-Mabuchi functional is $$\frac{d}{d\tau}\AM(\theta_{\tau}) = \frac{1}{n+1}\int_X \dot\theta_{\tau}\omega_{\theta_{\tau}}^n.$$Thus if  $[\co]^{n+1}=0$, then $$\lim_{\tau \to \infty} \int_X \dot\theta_{\tau}\omega_{\theta_{\tau}}^n =0.$$\end{lemma}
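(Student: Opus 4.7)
The plan is to prove the formula for $\frac{d}{d\tau}\AM(\theta_\tau)$ by a direct calculation from the definition, and then apply Theorem \ref{VolumeIsAM} together with the hypothesis $[\Omega]^{n+1}=0$ to deduce the limit statement.

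First, I would verify the derivative identity for a general smooth path $\varphi_s$ of K\"ahler potentials with $\omega_s:=\omega+i\ddbar\varphi_s$. Differentiating the defining expression
\[
\AM(\varphi_s)=\sum_{i=0}^{n}\int_X\varphi_s\,\omega^i\wedge\omega_s^{n-i}
\]
term by term produces
\[
\tfrac{d}{ds}\AM(\varphi_s)=\sum_{i=0}^{n}\int_X\dot\varphi_s\,\omega^i\wedge\omega_s^{n-i}+\sum_{i=0}^{n-1}(n-i)\int_X\varphi_s\,\omega^i\wedge i\ddbar\dot\varphi_s\wedge\omega_s^{n-i-1}.
\]
I would then integrate by parts in the second sum, moving $i\ddbar$ from $\dot\varphi_s$ onto $\varphi_s$ and using $i\ddbar\varphi_s=\omega_s-\omega$. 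Splitting the resulting sum and reindexing the piece with a $-\omega$ factor, everything telescopes, so only the $i=0$ summand survives. This yields the standard identity
\[
\tfrac{d}{ds}\AM(\varphi_s)=c_n\int_X\dot\varphi_s\,\omega_s^n
\]
for the appropriate normalising constant $c_n$ matching the one in the statement of the lemma. Specialising to $s=\tau$ and $\varphi_\tau=\theta_\tau$, $\omega_\tau=\omega_{\theta_\tau}$ gives the claimed formula.

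For the second statement, I would invoke Theorem \ref{VolumeIsAM}, which identifies
\[
[\Omega]^{n+1}=\lim_{\tau\to\infty}\tfrac{d}{d\tau}\AM(\theta_\tau).
\]
Substituting the derivative formula just proved gives
\[
[\Omega]^{n+1}=c_n\lim_{\tau\to\infty}\int_X\dot\theta_\tau\,\omega_{\theta_\tau}^n,
\]
and the hypothesis $[\Omega]^{n+1}=0$ immediately forces this limit to vanish.

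The only delicate point is the bookkeeping in the telescoping sum after integration by parts; this is routine but easy to miscount, so I would execute it carefully by combining the coefficients of each $\int_X\dot\varphi_s\,\omega^i\wedge\omega_s^{n-i}$ for $i=0,\ldots,n$ and checking that all terms with $i\ge 1$ cancel. No analytic subtlety is involved since the path $\theta_\tau$ is smooth and all integrands are uniformly controlled for $\tau$ in any compact set, which is all we need to justify differentiating under the integral and integrating by parts on the compact manifold $X$.
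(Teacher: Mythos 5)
Your plan is correct and is exactly the computation the paper has in mind: the paper states this lemma with only the remark that it is ``a simple calculation'', and the intended argument is precisely the differentiation under the integral, integration by parts with $i\ddbar\varphi_s=\omega_s-\omega$, and telescoping that you describe, followed by combining with Theorem \ref{VolumeIsAM}. One caveat on the ``delicate bookkeeping'': with the paper's normalisation $\AM(\varphi)=\sum_{i=0}^n\int_X\varphi\,\omega^i\wedge\omega_\varphi^{n-i}$ the telescoping yields $c_n=n+1$ (the coefficient of $\int_X\dot\varphi_s\,\omega_s^n$ is $1+n$ after the terms with $i\ge 1$ cancel), not the $\tfrac{1}{n+1}$ printed in the lemma, so you should not try to force your count to match the stated constant --- the discrepancy is a harmless normalisation slip in the paper and is immaterial for the second assertion, which only uses that the limit vanishes.
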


We wish to give another way of calculating the integral $\int_{C}\co$. Since $C\cong\pr^1$, and since $\co$ is smooth, we have $$\int_{C\cong \pr^1} \co = \int_{\C^*} \co.$$ 

From Theorem \ref{VolumeIsAM} we obtain the following.  \begin{lemma}\label{0-dimensionalKempfNess}We have $$\lim_{\tau\to\infty} \frac{d}{d\tau}\theta_{\tau}(p) = \int_{C} \co.$$ \end{lemma}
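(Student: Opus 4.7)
The strategy is to mimic the argument of Proposition~\ref{AMasddbar} and Theorem~\ref{VolumeIsAM}, but one dimension lower: instead of pushing forward $\co^{n+1}$ from $\X$ to $\pr^1$, I would pull back $\co$ along the section of $\pi$ that parameterises $C$. First I set up the geometry: since the $\C^*$-action on $\X$ covers the standard action on $\pr^1$ and $p\in\X_1$, the map $\pi|_C\colon C\to\pr^1$ is a biholomorphism for generic $p$ (which is all that is needed for the application in Proposition~\ref{prop-point}). Let $s\colon\pr^1\to C\subset\X$ be its inverse, so that $s(t)=\rho(t)p$ for $t\in\C^*$. Then $\int_C\co=\int_{\pr^1} s^*\co$, and so it suffices to evaluate the latter.

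The heart of the argument is the identity, on $\C^*$,
$$s^*\co \;=\; i\ddbar\bigl(\zeta_t(p)\bigr),$$
which is the $0$-dimensional analogue of Proposition~\ref{AMasddbar}. To motivate it, suppose $\co=i\ddbar H$ on an open set meeting $s(t_0)$, for some local K\"ahler potential $H$; one may then take $\zeta_t=\rho(t)^*H-H$, and
$$s^*\co \;=\; s^*(i\ddbar H) \;=\; i\ddbar\bigl(H(\rho(t)p)\bigr) \;=\; i\ddbar\bigl((\rho(t)^*H-H)(p)\bigr) \;=\; i\ddbar\zeta_t(p),$$
using that $H(p)$ is constant in $t$. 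To make this into a global statement, I would lift $\zeta$ to a single smooth function $\chi(t,x)$ on $\C^*\times\X$ with $\Psi^*\co-p_2^*\co=i\ddbar\chi$, where $\Psi(t,x)=\rho(t)x$, and then restrict this equation to the slice $\C^*\times\{p\}$: here $p_2^*\co$ pulls back trivially (as $p_2$ is constant on the slice), so one recovers $s^*\co=i\ddbar_t\chi(t,p)=i\ddbar\zeta_t(p)$.

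Given the key identity, the rest duplicates the Stokes-type integration used throughout this section. Since $\co$ is smooth on $\X$, the form $s^*\co$ is smooth on $\pr^1$ and gives no mass to $\{0,\infty\}$, so
$$\int_C\co \;=\; \int_{\pr^1} s^*\co \;=\; \int_{\C^*} i\ddbar\bigl(\theta_\tau(p)\bigr),$$
and, arguing exactly as in Theorem~\ref{VolumeIsAM}, this equals $\lim_{\tau\to\infty}\tfrac{d}{d\tau}\theta_\tau(p)-\lim_{\tau\to-\infty}\tfrac{d}{d\tau}\theta_\tau(p)$. The boundary term at $\tau=-\infty$ vanishes because under the trivialisation $\pi^{-1}(\pr^1\setminus\{0\})\cong X\times(\pr^1\setminus\{0\})$ the action is trivial on the $X$-factor, forcing $\theta_\tau(p)$ to tend to a constant as $\tau\to-\infty$. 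The main obstacle is the global form of the key identity: one must fix the $t$-dependent ambiguity in $\zeta_t$ coherently, either by directly constructing the potential $\chi$ above (via an equivariant $\ddbar$-type argument on $\C^*\times\X$) or by checking that the local formula $\zeta_t=\rho(t)^*H-H$ glues up to a function of $t$ alone, which is pluriharmonic on $\C^*$ by the $S^1$-invariance of $\zeta_t$ and hence does not affect the limit derivative. Once this is settled, everything else is an immediate repetition of the one-variable integration-by-parts already used in Proposition~\ref{AMasddbar} and Theorem~\ref{VolumeIsAM}.
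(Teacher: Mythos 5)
Your argument is essentially the paper's: the paper proves the lemma in one line by viewing $(C,\co|_C)$ as a test-configuration for the zero-dimensional manifold $\{p\}$, whose Aubin--Mabuchi functional is exactly $\theta_\tau(p)$, and invoking Theorem \ref{VolumeIsAM} (whose proof, via Proposition \ref{AMasddbar}, works verbatim when $\dim X=0$); your identity $s^*\co=i\ddbar\bigl(\zeta_t(p)\bigr)$ is precisely the $n=0$ case of Proposition \ref{AMasddbar}. One small caveat on your fallback for the normalisation ambiguity: an $S^1$-invariant pluriharmonic function on $\C^*$ has the form $a+b\log|t|^2=a-b\tau$, whose limit derivative in $\tau$ is $-b$ and need not vanish, so you should rely on your first option --- the global potential on $\C^*\times X$ implicit in Definition \ref{def:normalisationofpotentials}, which is the same $\zeta_t$ used in every other slope formula of this section and therefore introduces no discrepancy.
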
 Here we think of $(C,\co|_{C})$ as a test configuration for the point $p$, so that the Aubin-Mabuchi functional is simply $\theta_{\tau}(p)$ (the proof of Theorem \ref{VolumeIsAM} works even when $X$ is zero dimensional).

\begin{remark}
Lemma \ref{AubinMabuchiNormalisation} can also be obtained from \cite[Lemma 2.6]{bermankpoly} (noting that no positivity assumption is required in \cite[Lemma 2.6]{bermankpoly} as we assume $\co$ is smooth, and the equality we seek is additive in the K\"ahler metric).
\end{remark}

\begin{remark} With this in place, it is clear why one should expect a destabilising point to exist. Under our normalisationof $\co$, we have $$\lim_{\tau \to \infty}\int_X \dot\theta_{\tau}\omega_{\theta_{\tau}}^n=0.$$ On the other hand, if no destabilising point exists, for all $p\in X$ we have $$\lim_{\tau\to\infty} \frac{d}{d\tau}\theta_{\tau}(p) \geq 0,$$ which should lead to a contradiction. We now give a precise argument for why this is the case. \end{remark}

We would like to define the $L^1$-norm of a test-configuration, analogous to that in the projective case. It is not \emph{a priori} clear that our definition is actually well defined, so we give a conditional definition.

\begin{definition}[$L^1$-norm] Fix a test-configuration $(\X,\co)$, and normalise such that $[\co]^{n+1}=0$. We define the $L^1$\emph{-norm} of $(X,\co)$ to be $$\|(\X,\co)\|_1 = \lim_{\tau \to \infty} \int_X |\dot\theta_{\tau}| \omega_{\theta_{\tau}}^n,$$ provided the limit exists. \end{definition} 

We expect the $L^1$-norm always exists, and we will show it does in our case of interest. Our aim is to relate the $L^1$-norm to the minimum norm. The $L^1$-norm is closely related to the geometry of the space of K\"ahler potentials, as we now recall from \cite{Dar}.

\begin{definition}[$L^1$-Mabuchi metric]\label{L1Mabuchi} Denote by $\scH_{\omega}$ the space of K\"ahler potentials for $\omega$. We define the $L^1$\emph{-Mabuchi metric} on $\scH_{\omega}$ to be $$\|\xi\|_{\varphi} = \int_X |\xi|\omega_{\varphi}^n,$$ where $\xi \in T_{\varphi}\scH_{\omega}\cong C^{\infty}(X,\R)$. \end{definition}

\begin{remark} The $L^1$\emph{-Mabuchi metric} is called the \emph{weak Finsler metric} by Darvas-Rubinstein \cite[p. 10]{Dar-Rub}.\end{remark}

From this, one obtains a Finsler metric on $\scH_{\omega}$. 

\begin{definition}\label{FinslerMetric} Fix a smooth path $\alpha(s): [0,1] \to \scH_{\omega}$ with $\alpha(0)=\varphi_0$ and $\alpha(1) = \varphi_1$. The \emph{length} of $\alpha$ is defined to be $$\ell_1(\alpha) = \int_0^1 \| \dot\alpha(s) \|_{\alpha(s)} ds.$$ We define the \emph{Finsler metric} on $\scH$ as $$d_1(\varphi_0,\varphi_1) = \inf_{\alpha}\{\ell_1(\alpha) \ | \ \alpha(0)=\varphi_0, \alpha(1) = \varphi_1\}.$$\end{definition}

Our application of these ideas is related to the path of metrics arising from a test-configuration $(\X,\co)$.  We define 
$$\beta(s) = \theta_s$$
and for fixed $\tau$ let
$$ \gamma_\tau(s) = \beta(\tau s) \text{ for } s\in [0,1]. $$
In this way  $\gamma_{\tau}$ is a smooth path between $0$ and $\theta_\tau$  so
$$d_1(0,\theta_{\tau}) \leq \ell_1(\gamma_{\tau}).$$

Some elementary calculus gives the following.

\begin{lemma}\label{derivative-of-length} $\frac{d}{d\tau} \ell_1(\gamma_\tau) = \|\dot\theta_{\tau}\|_{\theta_{\tau}}$.\end{lemma}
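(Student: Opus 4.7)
The plan is to unfold the definitions and perform a change of variables. First I would substitute $\gamma_\tau(s) = \beta(\tau s) = \theta_{\tau s}$ into the definition of $\ell_1$, noting that by the chain rule the tangent vector along $\gamma_\tau$ at parameter $s$ is $\dot\gamma_\tau(s) = \tau \dot\theta_{\tau s}$. Since the $L^1$-Mabuchi norm is positively homogeneous in the tangent vector (because $|\,\cdot\,|$ is), we have
\begin{equation*}
\|\dot\gamma_\tau(s)\|_{\gamma_\tau(s)} = \int_X |\tau \dot\theta_{\tau s}|\,\omega_{\theta_{\tau s}}^n = \tau \|\dot\theta_{\tau s}\|_{\theta_{\tau s}},
\end{equation*}
assuming $\tau > 0$ (which is the relevant regime since $\tau \to \infty$).

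Next I would substitute this into the length functional and make the change of variables $u = \tau s$, $du = \tau\, ds$:
\begin{equation*}
\ell_1(\gamma_\tau) = \int_0^1 \tau \|\dot\theta_{\tau s}\|_{\theta_{\tau s}}\, ds = \int_0^\tau \|\dot\theta_u\|_{\theta_u}\, du.
\end{equation*}

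Finally, by the fundamental theorem of calculus applied to the expression above (which requires only that $u \mapsto \|\dot\theta_u\|_{\theta_u}$ be continuous, and this follows from smoothness of $\Omega$ and the $\mathbb C^*$-action on $\mathcal X^\times$), differentiating in $\tau$ yields
\begin{equation*}
\frac{d}{d\tau} \ell_1(\gamma_\tau) = \|\dot\theta_\tau\|_{\theta_\tau},
\end{equation*}
as claimed. There is essentially no obstacle here; the statement is a direct computation from the definitions of $\gamma_\tau$, $\ell_1$, and the $L^1$-Mabuchi norm, relying only on homogeneity of the norm in the tangent direction and a one-variable change of variables.
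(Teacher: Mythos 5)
Your argument is correct and is essentially identical to the paper's proof: both use the chain rule and positive homogeneity of the $L^1$-Mabuchi norm to write $\|\dot\gamma_\tau(s)\|_{\gamma_\tau(s)} = \tau\|\dot\theta_{\tau s}\|_{\theta_{\tau s}}$, then substitute $u=\tau s$ to obtain $\ell_1(\gamma_\tau)=\int_0^\tau\|\dot\theta_u\|_{\theta_u}\,du$ and differentiate. No issues.
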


\begin{proof} 

Clearly
$$\|\dot{\gamma}_\tau(s)\|_{\gamma_\tau(s)} = \tau\|\dot\beta(\tau s)\|_{\beta(\tau s)},$$ so 
$$\ell_1(\gamma_\tau)= \int_0^1\|\dot{\gamma_\tau}(s)\|_{\gamma_\tau(s)} ds =  \int_0^1\tau\|\dot\beta({\tau}s)\|_{\beta({\tau}s)}ds = \int_0^{\tau} \|\dot\beta(y)\|_{\beta(y)}dy,$$
from which the statement follows.
\end{proof}

The final result we will need to relate the minimum norm and the $L^1$-norm of a test-configuration is as follows.

\begin{proposition}\cite[Proposition 5.5]{Dar-Rub}\label{J-and-L1} There exists a constant $C>1$ such that for all $\varphi\in\scH_{\omega}$ we have $$\frac{1}{C}J(\theta_{\tau})-C \leq d_1(0,\theta_{\tau})  -\frac{\AM(\theta_{\tau})}{n+1}.$$\end{proposition}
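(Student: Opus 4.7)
The plan is to rewrite $d_1(0,\theta_\tau)-\AM(\theta_\tau)/(n+1)$ via envelope techniques and reduce to a pluripotential bound. First, for any smooth path $\alpha:[0,1]\to\scH_\omega$ with $\alpha(0)=0$ and $\alpha(1)=\theta_\tau$, split $|\dot\alpha|=\dot\alpha+2\dot\alpha^-$ where $\dot\alpha^-=\max(-\dot\alpha,0)$. The variational identity for $\AM$ integrates $\int_0^1\!\int_X\dot\alpha\,\omega_\alpha^n\,ds$ to a constant multiple of $\AM(\theta_\tau)$ (cf.\ Lemma~\ref{AubinMabuchiNormalisation}), yielding
\begin{equation*}
\ell_1(\alpha)-\frac{\AM(\theta_\tau)}{n+1}\;=\;2\int_0^1\!\!\int_X\dot\alpha^-\,\omega_\alpha^n\,ds\;\geq\;0.
\end{equation*}
Infimising over $\alpha$ already gives $d_1(0,\theta_\tau)-\AM(\theta_\tau)/(n+1)\geq 0$; the content of the proposition is that this non-negative quantity in fact dominates $J(\theta_\tau)$ up to affine constants.

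To achieve this, I would invoke Darvas's identity for the Finsler distance on the metric completion of $\scH_\omega$,
\begin{equation*}
d_1(0,\varphi)\;=\;\frac{\AM(\varphi)-2\AM(P(\varphi))}{n+1},
\end{equation*}
where $P(\varphi):=\sup\{\psi\in\mathrm{PSH}(X,\omega):\psi\leq 0,\;\psi\leq\varphi\}$ is the rooftop envelope. This rewrites the left-hand side of the target inequality as $-2\AM(P(\theta_\tau))/(n+1)$, reducing the proposition to the pluripotential bound $-\AM(P(\theta_\tau))/(n+1)\geq C^{-1}J(\theta_\tau)-C$ --- a statement that the rooftop envelope is sufficiently negative whenever $J$ is large.

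To establish this final bound I would exploit the orthogonality relation $\int_X(\theta_\tau-P(\theta_\tau))\,\omega_{P(\theta_\tau)}^n=0$ (the Monge--Amp\`ere measure of $P(\theta_\tau)$ is concentrated on the contact set $\{P(\theta_\tau)=\theta_\tau\}$), together with the pointwise sandwich $-P(\theta_\tau)\geq(-\theta_\tau)^+$. These together should yield $-\AM(P(\theta_\tau))/(n+1)\geq c\int_X(-\theta_\tau)^+\,\omega^n-C$, which is in turn bounded below by $c'J(\theta_\tau)-C'$ using $J(\theta_\tau)=\int_X\theta_\tau\,\omega^n-\AM(\theta_\tau)/(n+1)$ and the elementary split $\theta_\tau=\theta_\tau^+-(-\theta_\tau)^+$ combined with the Hartogs-type bound $\int_X\theta_\tau^+\omega^n\leq \int_X(-\theta_\tau)^+\omega^n+C$. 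The main obstacle is that $P(\theta_\tau)$ is typically only $C^{1,1}$ (by Berman--Demailly), so the orthogonality and monotonicity arguments must be performed in the finite-energy class $\mathcal{E}^1(X,\omega)$ where $\AM$, $J$, and $d_1$ extend continuously; this is precisely the technical framework developed in [Dar-Rub], and is the genuinely nontrivial ingredient of the proof.
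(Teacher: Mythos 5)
You should first know that the paper does not prove this proposition at all: it is imported verbatim from \cite[Proposition 5.5]{Dar-Rub}, together with the remark that the normalisation $\AM(\theta_\tau)=0$ assumed there can be dropped ``by the same proof''; the proof in \cite{Dar-Rub} runs through Darvas's comparison $d_1(u,v)\asymp \int_X|u-v|\,(\omega_u^n+\omega_v^n)$ from \cite{Dar}, not through envelopes. Your opening steps are nonetheless sound: the splitting $|\dot\alpha|=\dot\alpha+2\dot\alpha^-$ together with $\frac{d}{ds}\AM(\alpha(s))=(n+1)\int_X\dot\alpha\,\omega_{\alpha(s)}^n$ correctly gives $d_1(0,\theta_\tau)\ge \AM(\theta_\tau)/(n+1)$, and the Pythagorean formula $d_1(u,v)=E(u)+E(v)-2E(P(u,v))$ with $E=\AM/(n+1)$ is indeed a theorem of Darvas \cite{Dar}, so your reduction of the right-hand side to $-\tfrac{2}{n+1}\AM(P(\theta_\tau))$ is legitimate (modulo the slip that $\omega_{P(\theta_\tau)}^n$ is concentrated on $\{P=0\}\cup\{P=\theta_\tau\}$, i.e.\ $P=\min(0,\theta_\tau)$ there, not on $\{P=\theta_\tau\}$ alone).

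The genuine gap is the final step. The ``Hartogs-type bound'' $\int_X\theta^+\omega^n\le\int_X(-\theta)^+\omega^n+C$ is false: take $\theta\equiv M$ a large positive constant. Structurally, $J$ is invariant under $\theta\mapsto\theta+c$, while $\int_X(-\theta)^+\omega^n$ and your reduced right-hand side $-\tfrac{2}{n+1}\AM(P(\theta))$ are not; the latter vanishes identically as soon as $\theta\ge 0$, since then $P(\theta)=0$ (equivalently: for $u\ge 0$ the geodesic from $0$ to $u$ is monotone, so $d_1(0,u)=\AM(u)/(n+1)$ exactly). Hence no chain of inequalities of the proposed shape can close, and in fact your own (correct) Pythagorean reduction exposes that the proposition, read literally with no normalisation, fails for $\theta=\varphi+c$ with $c\gg 0$ once $J(\varphi)>C^2$. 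The missing ingredient is precisely the normalisation of \cite{Dar-Rub}: under $\AM(\theta)=0$ (or $\AM(\theta)\le 0$, by shifting $\theta$ to $\tilde\theta=\theta-\AM(\theta)/((n+1)[\omega]^n)$ and using that constant shifts move $d_1(0,\cdot)$ by at most $|c|[\omega]^n$) the statement is true, and in the paper's application this suffices because only the slope as $\tau\to\infty$ is used and the test-configuration is normalised so that $[\co]^{n+1}=0$, which keeps $\AM(\theta_\tau)$ under control. Your envelope scheme could plausibly be completed in that normalised setting, but the bound $J(\theta)\lesssim\int_X(-\theta)^+\omega^n+C$ would have to be derived \emph{using} $\AM(\theta)=0$ --- this is exactly where translation-invariance must be broken --- and your write-up does not do this.
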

\begin{remark} The version of the above stated in \cite[Proposition 5.5]{Dar-Rub} assumes $\AM(\theta_{\tau}) = 0$, but if one does not make this assumption then the same proof gives the statement above. \end{remark}

We can now prove the following analogue of Proposition \ref{J-and-L1} for test-conf\-igurations, which relates the minimum norm to the $L^1$ norm.

\begin{proposition}\label{L1DominatesMinimum} Fix a test-configuration $(X,\co)$ and normalise the test configuration in such a way that $[\co]^{n+1}=0$. Suppose $(X,\co)$ has well-defined $L^1$-norm. Then there exists a $C>1$, independent of $(X,\co)$, such that $$\frac{1}{C}\|(\X,\co)\|_m \leq \|(\X,\co)\|_1.$$
\end{proposition}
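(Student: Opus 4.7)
The plan is to derive the inequality by combining the Darvas-Rubinstein comparison (Proposition \ref{J-and-L1}) with the explicit path $\gamma_\tau$ between $0$ and $\theta_\tau$ in $\mathcal H_\omega$ constructed just before Lemma \ref{derivative-of-length}, and then to divide by $\tau$ and take $\tau\to\infty$, so that each of the three functionals $J$, $\mathrm{AM}$, and the length $\ell_1(\gamma_\tau)$ converts under $\cdot/\tau$ into the slope quantity identified earlier.

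First I would apply Proposition \ref{J-and-L1} at $\varphi=\theta_\tau$ to get
$$\frac{1}{C}J(\theta_\tau)-C\leq d_1(0,\theta_\tau)-\frac{\AM(\theta_\tau)}{n+1}.$$
The infimum defining the Finsler metric gives $d_1(0,\theta_\tau)\leq \ell_1(\gamma_\tau)$, and Lemma \ref{derivative-of-length} together with $\gamma_\tau(0)=0$ yields
$$\ell_1(\gamma_\tau)=\int_0^\tau \|\dot\beta(y)\|_{\beta(y)}\,dy=\int_0^\tau\!\!\int_X |\dot\theta_y|\,\omega_{\theta_y}^n\,dy.$$
Dividing the combined inequality by $\tau$ gives
$$\frac{1}{C\tau}J(\theta_\tau)-\frac{C}{\tau}\leq \frac{1}{\tau}\int_0^\tau\!\!\int_X |\dot\theta_y|\,\omega_{\theta_y}^n\,dy-\frac{\AM(\theta_\tau)}{(n+1)\tau}.$$

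Next I would pass to the limit as $\tau\to\infty$ in each term. By hypothesis $\int_X|\dot\theta_y|\,\omega_{\theta_y}^n$ converges to $\|(\X,\co)\|_1$, and the Cesaro mean theorem then gives
$$\lim_{\tau\to\infty}\frac{1}{\tau}\int_0^\tau\!\!\int_X |\dot\theta_y|\,\omega_{\theta_y}^n\,dy=\|(\X,\co)\|_1.$$
Applying the same Cesaro argument to the differentiable functions $\tau\mapsto J(\theta_\tau)$ and $\tau\mapsto \AM(\theta_\tau)$, whose derivatives converge (by Theorem \ref{MinNormAsSlope} and Theorem \ref{VolumeIsAM}, respectively) to $\|(\X,\co)\|_m$ and to $[\co]^{n+1}=0$ under our normalisation, yields
$$\lim_{\tau\to\infty}\frac{J(\theta_\tau)}{\tau}=\|(\X,\co)\|_m,\qquad \lim_{\tau\to\infty}\frac{\AM(\theta_\tau)}{\tau}=0.$$

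Combining these three limits in the displayed inequality gives
$$\frac{1}{C}\|(\X,\co)\|_m\leq \|(\X,\co)\|_1,$$
which is the required statement, with $C$ the absolute constant from Proposition \ref{J-and-L1}. The main technical point is the threefold use of the Cesaro mean theorem to convert ``derivative has a limit'' into ``function divided by $\tau$ has the same limit''; this requires only that each of $J(\theta_\tau)$, $\AM(\theta_\tau)$ and $\int_X|\dot\theta_\tau|\omega_{\theta_\tau}^n$ depend smoothly enough on $\tau$ to apply the fundamental theorem of calculus, which is automatic from the smoothness of $\Omega$ away from the central fibre and the definitions. No positivity issue arises from the normalisation $[\Omega]^{n+1}=0$ since Lemma \ref{normalisingtheDF} shows it leaves both $\|(\X,\co)\|_m$ and $\|(\X,\co)\|_1$ unchanged.
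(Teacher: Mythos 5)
Your proof is correct and follows essentially the same route as the paper: Proposition \ref{J-and-L1} combined with $d_1(0,\theta_\tau)\le\ell_1(\gamma_\tau)$, Lemma \ref{derivative-of-length}, and the slope formulas of Theorems \ref{MinNormAsSlope} and \ref{VolumeIsAM}. The only difference is that you make explicit (via dividing by $\tau$ and the Ces\`aro/L'H\^opital argument) the passage from the inequality of functionals to the inequality of their limit slopes, a step the paper leaves implicit; this is a welcome clarification rather than a departure.
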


\begin{proof} 
We have \begin{align*}\frac{1}{C}J(\varphi_{\tau})-C &\leq d_1(0,\varphi_{\tau}) -\frac{\AM(\theta_{\tau})}{n+1}, \\ &\leq \ell_1(\gamma_{\tau}) -\frac{\AM(\theta_{\tau})}{n+1}.\end{align*} Hence using Theorem \ref{MinNormAsSlope} and then Lemma \ref{derivative-of-length}

$$ \frac{1}{C} \| (\X,\co) \|_m = \lim_{{\tau} \to \infty}\frac{d}{d{\tau}}\frac{1}{C}J(\theta_{\tau})\leq \lim_{{\tau} \to \infty}\frac{d}{d{\tau}}  \ell_1(\gamma_\tau) = \lim_{{\tau} \to \infty} 
\|\dot\theta_{\tau}\|_{\theta_{\tau}} = \|(\X,\co)\|_1,$$ where we have used that $$\lim_{\tau \to \infty} \frac{d}{d\tau}\AM(\theta_{\tau}) = [\co]^{n+1}=0.$$

\end{proof}

We can now prove the existence of a destabilising point on non-trivial test-configurations.

\begin{proposition}\label{final-existence-destabilising-point} Suppose that $\|(\X,\co)\|_m>0$. Then there exists a point $p\in X$ such that $$Ch_p(\X,\co) = \frac{[\co]^{n+1}}{(n+1)[\omega]^n} - \int_C\co>0.$$\end{proposition}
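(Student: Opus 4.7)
The plan is to argue by contradiction. Suppose no $p \in X$ satisfies $Ch_p(\X,\co)>0$. By Lemma \ref{normalisingtheDF} we may replace $\co$ by $\co + c\pi^*\omega_{FS}$ for any $c\in\R$ without changing the minimum norm, so I would first normalise the test-configuration so that $[\co]^{n+1}=0$. Under this normalisation the Chow weight simplifies to $Ch_p(\X,\co) = -\int_C \co$, and the contradiction hypothesis becomes $\int_C \co \geq 0$ for every $\C^*$-orbit closure $C=\overline{\C^*\cdot p}$. The goal is then to force $\|(\X,\co)\|_1 = 0$; once this is known, Proposition \ref{L1DominatesMinimum} immediately yields $\|(\X,\co)\|_m = 0$, contradicting the hypothesis of the proposition.

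The two key inputs will be combined as follows. Lemma \ref{0-dimensionalKempfNess} translates the hypothesis into the pointwise statement $\lim_{\tau\to\infty}\dot\theta_\tau(p) = \int_C \co \geq 0$ for every $p\in X$, while the normalisation, together with Theorem \ref{VolumeIsAM} and Lemma \ref{AubinMabuchiNormalisation}, gives $\lim_{\tau\to\infty}\int_X\dot\theta_\tau\,\omega_{\theta_\tau}^n = 0$. To link these with the $L^1$-norm I would use the elementary identity $|f| = f + 2f_-$ with $f_-=\max(-f,0)$ to write
\[
\int_X |\dot\theta_\tau|\,\omega_{\theta_\tau}^n \;=\; \int_X \dot\theta_\tau\,\omega_{\theta_\tau}^n \;+\; 2\int_X (\dot\theta_\tau)_-\,\omega_{\theta_\tau}^n.
\]
The first term on the right already tends to zero, and the pointwise statement above gives $(\dot\theta_\tau)_-(p) \to 0$ for every $p$. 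The remaining task is therefore to upgrade this pointwise convergence to convergence of the integrals against the measures $\omega_{\theta_\tau}^n$.

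The hard part will be exactly this last step, because the measures $\omega_{\theta_\tau}^n = \rho(t)^*\co_t^n$ depend on $\tau$ and may be concentrating along subvarieties of $X$ as $t \to 0$, so a naive dominated convergence on the fixed measure space $(X,\omega^n)$ is not directly available. To overcome this I would push the integral forward to $\X_t$ via $\rho(t)$, where it becomes $\int_{\X_t} \dot\zeta_t\,\co_t^n$ for the global smooth potential $\zeta_t$ from Definition \ref{def:normalisationofpotentials}; smoothness of $\co$ on the compact total space $\X$ then provides a uniform bound on $\dot\zeta_t$, and tightness of the family of measures $\co_t^n$ combined with an Egorov-type argument for the pointwise convergence of $(\dot\zeta_t)_-$ should drive the remaining integral to zero. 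Once $\|(\X,\co)\|_1 = 0$ has been established in this way, Proposition \ref{L1DominatesMinimum} completes the proof by contradiction.
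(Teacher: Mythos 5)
Your setup is exactly the paper's: normalise so that $[\co]^{n+1}=0$ via Lemma \ref{normalisingtheDF}, use Lemma \ref{0-dimensionalKempfNess} to turn the contradiction hypothesis into $\lim_{\tau\to\infty}\dot\theta_\tau(p)\ge 0$ for all $p$, use Lemma \ref{AubinMabuchiNormalisation} to get $\lim_{\tau\to\infty}\int_X\dot\theta_\tau\,\omega_{\theta_\tau}^n=0$, and aim to conclude $\|(\X,\co)\|_1=0$ so that Proposition \ref{L1DominatesMinimum} forces $\|(\X,\co)\|_m=0$. You also correctly isolate the one hard step: passing from pointwise convergence of $(\dot\theta_\tau)_-$ to convergence of $\int_X(\dot\theta_\tau)_-\,\omega_{\theta_\tau}^n$.

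However, your proposed resolution of that step does not work, and this is a genuine gap. Tightness of $\{\co_t^n\}$ is automatic on a compact space and gives nothing, and an Egorov argument controls the measure of the exceptional set only with respect to a \emph{fixed} reference measure: the set $E$ off which $(\dot\theta_\tau)_-\to 0$ uniformly can have small $\omega^n$-measure and yet carry almost all of the mass of $\omega_{\theta_\tau}^n$ for large $\tau$, since these measures concentrate as $t\to 0$. Even granting a uniform bound $|\dot\theta_\tau|\le M$ (which is plausible, as $\dot\theta_\tau$ is essentially a fixed Hamiltonian transported by the flow), the model problem "$f_\tau\to 0$ pointwise, $0\le f_\tau\le M$, $\mu_\tau$ of fixed mass $\Rightarrow\int f_\tau\,d\mu_\tau\to 0$" is simply false: a bump of height $M$ travelling with a concentrating $\mu_\tau$ is a counterexample, and nothing in your hypotheses excludes this behaviour for $(\dot\theta_\tau)_-$ against $\omega_{\theta_\tau}^n$. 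The paper circumvents this by replacing the smooth ray $\theta_\tau$ with the weak geodesic ray $\psi_\tau$ associated to the test-configuration (Berman, \cite[Proposition 2.7]{bermankpoly}): convexity of $\psi_\tau$ in $\tau$ makes $\dot\psi_\tau$ monotone, which supplies the dominating function needed to interchange limit and integral, and the uniform bound on $\psi_\tau-\theta_\tau$ then transfers the conclusion back to $\theta_\tau$ and identifies the limit with $\|(\X,\co)\|_1$. Some such convexity or monotonicity input is essential here; without it your argument stalls at exactly the point you flag.
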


\begin{proof}  First of all we normalise such that $[\co]^{n+1}=0$. We are then seeking a point $p\in X$ such that  $\int_C\co<0$. By Lemma \ref{AubinMabuchiNormalisation}, our normalisation gives that \begin{equation}\label{AubinMabuchiZero}\lim_{\tau \to \infty}\int_X \dot\theta_{\tau}\omega_{\theta_{\tau}}^n=0.\end{equation}

We argue by contradiction and assume no such point exists. By Lemma \ref{0-dimensionalKempfNess}, we therefore have that $$\lim_{{\tau}\to\infty} \dot\theta_{\tau}(p)\geq 0$$ for \emph{all} $p\in X$. In particular, if we knew we could swap limits and integration, this would give
 $$\lim_{{\tau} \to \infty} \int_X |\dot\theta_{\tau}|  \omega_{\theta_{\tau}}^n = \lim_{{\tau} \to \infty} \int_X \dot\theta_{\tau}  \omega_{\theta_{\tau}}^n=0$$ 
and hence the $L^1$-norm of the test-configuration is equal to zero and we would be done.  However, not being able to justify this step we argue instead using the associated geodesic of the test-configuration (see \cite[Proposition 2.7]{bermankpoly}). This geodesic $\psi_{\tau}$ solves a homogeneous Monge-Amp\`ere equation (in the sense of currents) on $\X_{\pi^{-1}(\partial\Delta)}$ with background metric $\Omega$, so that $\psi_{t}\to \varphi_{t}$ as $t\to 1$. Then
$$\lim_{{\tau}\to\infty} \dot \psi_\tau (p)= \lim_{{\tau}\to\infty} \dot\theta_{\tau}(p)\geq 0.$$

Thus

$$\int_X  \lim_{\tau\to \infty} |\dot\psi_\tau|  \omega_{\theta_{\tau}}^n = \int_X  \lim_{\tau\to \infty} \dot\psi_\tau  \omega_{\theta_{\tau}}^n
=  \int_X  \lim_{\tau\to \infty} \dot\theta_{\tau}  \omega_{\theta_{\tau}}^n=0$$
Now convexity of the geodesic in $\tau$ implies that
$$|\dot{\psi}_\tau|\le \lim_{\tau\to\infty} |\dot{\psi}_\tau|.$$
Hence the Dominate Convergence Theorem applies to give
$$\lim_{\tau\to \infty} \int_X |\dot\psi_\tau|  \omega_{\theta_{\tau}}^n  =\int_X  \lim_{\tau\to \infty} |\dot\psi_\tau|  \omega_{\theta_{\tau}}^n =0.$$
Now $\psi_\tau-\theta_{\tau}$ is bounded over all $\tau$ so
$$ \lim_{\tau\to \infty} \int_X |\dot\psi_\tau|  \omega_{\theta_{\tau}}^n = \lim_{\tau\to \infty} \int_X |\dot\theta_\tau| \omega_{\theta_{\tau}}^n = \|(\mathcal X,\Omega)\|_{1}.$$
Thus Proposition \ref{L1DominatesMinimum} gives that the minimum norm $\| (\mathcal X,\co)\|_m=0$ which contradicts our hypothesis. 

 Hence there must be a point such that $\int_C\co<0$ as claimed. \end{proof}

\section{Related canonical metrics and stability}\label{sec:related}

The methods of the previous sections extend almost verbatim to the setting of critical points of the J-flow and twisted cscK metrics. We briefly explain the required adaptations.

\begin{definition}[J-flow] Fix a K\"ahler metric $\alpha$ in an arbitrary K\"ahler class. A \emph{critical point of the J-flow} in a K\"ahler class $[\omega]$ is defined to be a solution to $$\Lambda_{\omega}\alpha = \gamma,$$ where the topological constant $\gamma$ is given as $$\gamma = \frac{[\alpha].[\omega]^{n-1}}{[\omega]^n},$$ and $\omega \in [\omega]$.\end{definition}

The J-flow was introduced by Donaldson \cite{Donaldson-Momentmaps}, and has close links to properties of the Mabuchi functional \cite{Chen-IMRN}. The primary analytic result of interest to us related to the J-flow is as follows.

\begin{theorem}\label{functionalsforJ-flow}\cite{Chen-IMRN,CollinsSz,SongWein} There exists a critical point of the J-flow in the K\"ahler class $[\omega]$ if and only if the functional $$\mathcal{J}_{\alpha}(\varphi) = L_{\alpha}(\varphi) - \frac{n}{n+1}\gamma\AM(\varphi)$$ is coercive. \end{theorem}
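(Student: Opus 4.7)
My plan is to establish both directions, with the ``easy'' direction through a convexity argument and the ``hard'' direction through the continuity method, paralleling the Chen--Tian strategy for the cscK equation.

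As a preliminary I would compute the first variation of $\mathcal{J}_\alpha$ along a path $\varphi_t\in\scH_\omega$ and verify that, up to a positive constant depending on the convention for $\Lambda_{\omega}$,
$$\frac{d}{dt}\mathcal{J}_\alpha(\varphi_t) = \int_X \dot\varphi_t \, (\Lambda_{\omega_{\varphi_t}}\alpha - \gamma)\, \omega_{\varphi_t}^n,$$
so that critical points of $\mathcal{J}_\alpha$ are exactly solutions of $\Lambda_{\omega_\varphi}\alpha = \gamma$. Next I would verify that $\mathcal{J}_\alpha$ is convex along $C^{1,1}$-geodesics in $\scH_\omega$: this follows from a direct computation using the homogeneous complex Monge-Amp\`ere equation satisfied by such a geodesic together with the positivity of $\alpha$, exactly as one checks convexity of $L_{\mathrm{Ric}\,\omega}$ for the Mabuchi functional.

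For the direction ``critical point exists $\Rightarrow$ $\mathcal{J}_\alpha$ coercive'', convexity and the existence of a critical point $\varphi_0$ imply $\varphi_0$ is a global minimum. To upgrade boundedness below to coercivity, I would study the asymptotic slope of $\mathcal{J}_\alpha$ along unit-speed $d_1$-geodesic rays emanating from $\varphi_0$: convexity gives monotonicity of this slope along each ray, and strict positivity of the asymptotic slope follows from the strict positivity of $\alpha$ (with no automorphism obstruction, since the J-flow equation is not invariant under the automorphism group of $(X,\omega)$ unless it also preserves $[\alpha]$, a case one handles by a perturbation). Combined with a Darvas-type comparison between $d_1(0,\varphi)$ and $J(\varphi)$ as in Proposition~\ref{J-and-L1}, this yields the required estimate $\mathcal{J}_\alpha(\varphi) \geq \epsilon J(\varphi) - C$.

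For the converse I would run the continuity method on the family $\alpha_s := (1-s)\tfrac{\gamma}{n}\omega + s\alpha$ for $s\in[0,1]$, noting that $\varphi\equiv 0$ solves the $s=0$ equation trivially. Openness follows from the implicit function theorem applied to the linearised operator, which is a second-order elliptic operator on $C^\infty(X,\R)/\R$ with constant kernel. For closedness, coercivity of $\mathcal{J}_{\alpha_s}$ (inherited from $\mathcal{J}_\alpha$ by interpolation) gives a uniform bound on $J(\varphi_s)$ along the path, hence a normalised $C^0$ bound via Lemma~\ref{IJarenonneg}. The main obstacle --- and the real technical content of the theorem --- is upgrading this $C^0$ estimate to a $C^2$ estimate for the fully nonlinear J-flow equation; this requires the delicate maximum principle argument of Song--Weinkove applied to a quantity of the form $\log\Lambda_{\omega_{\varphi_s}}\omega - A\varphi_s$ for suitable $A>0$, later refined by Collins--Sz\'ekelyhidi. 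Once the $C^2$ estimate is in hand, Evans--Krylov theory followed by Schauder estimates yield all higher-order derivatives, closing the continuity method.
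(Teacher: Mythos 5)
First, a point of comparison: the paper does not prove Theorem \ref{functionalsforJ-flow} at all --- it is quoted from \cite{Chen-IMRN,CollinsSz,SongWein} as an analytic input, so there is no internal proof to measure your argument against. Your overall architecture (first variation identifying critical points, convexity of $\mathcal{J}_{\alpha}$ along geodesics because $\AM$ is affine and $L_{\alpha}$ is convex when $\alpha>0$, and a continuity method for the converse) is indeed the shape of the argument in those references. However, two of your key steps have genuine gaps. In the direction ``critical point $\Rightarrow$ coercive'', convexity plus a critical point only gives boundedness below; your upgrade to coercivity rests on the assertion that ``strict positivity of the asymptotic slope follows from the strict positivity of $\alpha$'' along every $d_1$-geodesic ray. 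That is exactly the quantitative content you need and it is not supplied: positivity of $\alpha$ gives nonnegativity of the asymptotic slope (which you already have from the existence of a minimum), not a uniform lower bound by $\epsilon$ times the slope of $J$. The standard fix is the perturbation trick the paper itself deploys in Theorem \ref{twisted-Mabuchi-coercive}: if $\omega_{\varphi}$ solves $\Lambda_{\omega_{\varphi}}\alpha=\gamma$, then the same metric solves the J-equation for the K\"ahler form $\alpha-\epsilon\omega_{\varphi}$ (since $\Lambda_{\omega_{\varphi}}\omega_{\varphi}$ is constant), so $\mathcal{J}_{\alpha-\epsilon\omega_{\varphi}}$ is bounded below by convexity, and the identity $\mathcal{J}_{\alpha}=\mathcal{J}_{\alpha-\epsilon\omega_{\varphi}}+\epsilon\bigl(I-J\bigr)+O(1)$ combined with Lemma \ref{IJarenonneg} gives coercivity.

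The second gap is in your closedness argument. A uniform bound on $J(\varphi_s)$ does not yield ``a normalised $C^0$ bound via Lemma \ref{IJarenonneg}'': that lemma only compares the energies $I$ and $J$ with each other, and an energy bound controls at best an $L^1$ or $d_1$-type quantity, never $\sup|\varphi_s|$. The $C^0$ estimate for the J-equation is a substantive theorem in Song--Weinkove and Collins--Sz\'ekelyhidi, obtained from the structure of the equation (the cone/subsolution condition plus a Moser-iteration or maximum-principle argument), not from the energy bound alone. Relatedly, the second-order estimate you invoke requires a subsolution in the sense of the Song--Weinkove cone condition, and extracting such a subsolution from coercivity of $\mathcal{J}_{\alpha}$ is precisely the hard analytic point of the converse direction; it does not come for free ``by interpolation'' along your continuity path. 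As written, both halves of your proof hinge on the two steps that carry the real content of the theorem.
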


The corresponding notion of stability is as follows. 

\begin{definition}[J-stability]\cite{LejmiSz} We define the \emph{J-weight} of a test-configuration $(\X,\co)$ to be the numerical invariant $$J_{[\alpha]}(\X,\co) = [\co]^n.[q^*\alpha] - \frac{n}{n+1}\gamma [\co]^{n+1},$$ computed on a resolution of indeterminacy. We say that $(X,[\omega])$ is \emph{uniformly J-stable} if for all test-configurations we have $$J_{[\alpha]}(\X,\co) > \epsilon \|(\X,\co)\|_m,$$ for some $\epsilon$ independent of $(\X,\co)$\end{definition}

In the projective case, this definition agrees with that of Lejmi-Sz\'ekelyhidi \cite{LejmiSz} by \cite[Proposition 4.29]{DervanKeller}. The results of the previous sections extend to this setting in a trivial way to give the following, 

\begin{theorem} Fix a test-configuration $(\X,\co)$.  We have  $$\lim_{\tau \to \infty} \frac{d}{d\tau}\mathcal{J}_{\alpha}(\theta_{\tau}) = J_{[\alpha]}(\X,\co),$$ where $\rho(t)^*\co_t =\co_1 + i \ddbar \varphi_t$, $\tau = -\log |t|^2$ and $\theta_{\tau} = \varphi_t$.\end{theorem}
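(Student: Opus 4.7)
My plan is to adapt the proof of Theorem \ref{DFisslopeofMabuchi} to the $\mathcal{J}_\alpha$ setting. A pleasant feature of the $\mathcal{J}_\alpha$-functional compared to the Mabuchi functional is the absence of any logarithmic term, so the auxiliary function $\psi(t)$ from Remark \ref{psi-considerations} (and with it the hypothesis that the central fibre be reduced) plays no role, and the identity will hold for any test-configuration. I would first treat the case where $\X$ is smooth and the natural bimeromorphic map $\X \dashrightarrow X\times \pr^1$ is everywhere defined, and then reduce the general case to this one by passing to a resolution of indeterminacy $q: \Y \to X\times\pr^1$ dominating $\X$, on which both sides of the claimed identity can be computed.

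The central technical step is an analog of Proposition \ref{AMasddbar} for the functional $L_\alpha$: to prove that
$$\pi_*(\co^n \wedge q^*\alpha) = i\ddbar L_\alpha(\varphi_t) \qquad \text{on } \pr^1 \setminus \{0\},$$
and that the resulting pushforward current assigns zero mass to $\{0\}$. The zero-mass statement follows by the same argument as in Proposition \ref{AMasddbar}, since $\co^n\wedge q^*\alpha$ is smooth on $\Y$. The identity itself I would establish by the same template: test against a compactly supported $f$ on $U \subset \C^*$, push the fibre integrand over by $\rho(t)$, and use $\rho(t)_*(i\ddbar \varphi_t) = \co - \rho(t)_*\co_1$. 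The resulting telescoping sum collapses $(\co - \rho(t)_*\co_1) \wedge \sum_{i=0}^{n-1}\rho(t)_*\co_1^i \wedge \co^{n-1-i}$ to $\co^n - \rho(t)_*\co_1^n$; and the second summand, wedged with $\rho(t)_*\alpha$, is the $\rho(t)$-pushforward of $\alpha \wedge \co_1^n$, an $(n+1,n+1)$-form on the $n$-dimensional fibre $X$, and hence vanishes. What remains on the right-hand side is precisely $\pi^*f \cdot \co^n \wedge q^*\alpha$, yielding the identity after integration by parts.

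Combining this with Proposition \ref{AMasddbar} and the definition $\mathcal{J}_\alpha = L_\alpha - \tfrac{n}{n+1}\gamma \AM$, the same change-of-variables argument used in the proof of Theorem \ref{VolumeIsAM} (with $\tau = -\log|t|^2$, using $S^1$-invariance and the vanishing of the $\tau \to -\infty$ limit coming from triviality of the $\C^*$-action at $\infty\in \pr^1$) gives
$$\lim_{\tau\to\infty}\frac{d}{d\tau}L_\alpha(\theta_\tau) = \int_{\pr^1}\pi_*(\co^n\wedge q^*\alpha) = [\co]^n \cdot [q^*\alpha],$$
and $\lim_{\tau\to\infty}\frac{d}{d\tau}\AM(\theta_\tau) = [\co]^{n+1}$ by Theorem \ref{VolumeIsAM}. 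Subtracting the two with the appropriate coefficients yields
$$\lim_{\tau\to\infty}\frac{d}{d\tau}\mathcal{J}_\alpha(\theta_\tau) \;=\; [\co]^n \cdot [q^*\alpha] - \tfrac{n}{n+1}\gamma [\co]^{n+1} \;=\; J_{[\alpha]}(\X, \co).$$

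No single step should present real difficulty: the only bookkeeping to watch is the consistent handling of $q^*\alpha$ (naturally a form on $\Y$) alongside $\co$ (possibly only defined on $\X$), but since pushforwards to $\pr^1$ are insensitive to what happens on subvarieties of dimension less than $n$, this causes no genuine trouble and is essentially the same independence-of-resolution issue that is dealt with in Lemma \ref{canworkonres}.
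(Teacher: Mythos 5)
Your proposal is correct and follows exactly the route the paper intends: the paper gives no written proof here, stating only that the arguments of Sections \ref{sec:semistability} ``extend in a trivial way,'' and your argument is precisely that extension — an $L_\alpha$-analogue of Proposition \ref{AMasddbar} via the same telescoping and integration-by-parts template, zero mass at the origin by smoothness, and the change-of-variables limit argument of Theorem \ref{VolumeIsAM}, with no reducedness hypothesis needed since no logarithmic $\psi$-term appears. (You are also right to read the sum in Definition \ref{L-alpha} as running to $n-1$.)
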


The following corollary is due to Lejmi-Sz\'ekelyhidi in the projective case, using different methods.

\begin{corollary} If $(X,[\omega])$ admits a critical point of the J-flow, then $(X,[\omega])$ is uniformly J-stable.\end{corollary}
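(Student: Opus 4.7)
The plan is to mirror the argument of Corollary \ref{cor:Kstable} almost verbatim, substituting the J-functional $\mathcal{J}_\alpha$ and the J-weight $J_{[\alpha]}(\mathcal X,\Omega)$ for the Mabuchi functional and the Donaldson-Futaki invariant respectively. All the nontrivial work has already been done: the analytic input is Theorem \ref{functionalsforJ-flow}, while the geometric input is the ``Kempf-Ness type'' identity displayed just before the corollary together with Theorem \ref{MinNormAsSlope}.

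First, I would invoke Theorem \ref{functionalsforJ-flow}: since $[\omega]$ admits a critical point of the J-flow, $\mathcal{J}_\alpha$ is coercive, so there exist $\epsilon > 0$ and $c \in \R$ such that
$$\mathcal{J}_\alpha(\varphi) - \epsilon J(\varphi) \geq c$$
for every K\"ahler potential $\varphi$. Next, fix an arbitrary test-configuration $(\mathcal X, \Omega)$ for $(X,[\omega])$, and consider the associated family of normalised potentials $\theta_\tau$ from Definition \ref{def:normalisationofpotentials}. Evaluating the coercivity estimate along this family gives $\mathcal{J}_\alpha(\theta_\tau) - \epsilon J(\theta_\tau) \geq c$ for all $\tau$.

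Since this function of $\tau$ is bounded below and, by the preceding theorem and Theorem \ref{MinNormAsSlope}, its derivative converges to a finite limit as $\tau \to \infty$, that limit must be non-negative:
$$\lim_{\tau \to \infty}\frac{d}{d\tau}\bigl(\mathcal{J}_\alpha(\theta_\tau) - \epsilon J(\theta_\tau)\bigr) \geq 0.$$
Substituting the two limit formulas yields $J_{[\alpha]}(\mathcal X, \Omega) \geq \epsilon \|(\mathcal X, \Omega)\|_m$, which is exactly uniform J-stability.

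In principle the main obstacle is justifying that the limit of the derivative is non-negative purely from a lower bound of the function; but this is genuinely elementary (if the limit were some $L < 0$, then $\mathcal{J}_\alpha(\theta_\tau) - \epsilon J(\theta_\tau)$ would be asymptotic to $L\tau$ and hence unbounded below). If needed one may first reduce to test-configurations with smooth total space and reduced central fibre by the exact analogue of Proposition \ref{prop:simplifiedtestconfig}, but the formal structure of the proof is otherwise identical to that of Corollary \ref{cor:Kstable}.
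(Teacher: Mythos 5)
Your proposal is correct and is precisely the argument the paper intends: the corollary is stated as an immediate consequence of the coercivity statement in Theorem \ref{functionalsforJ-flow}, the limit-derivative formula for $\mathcal{J}_{\alpha}$ along the path $\theta_\tau$, and Theorem \ref{MinNormAsSlope}, exactly mirroring the proof of Corollary \ref{cor:Kstable}. Nothing essential is missing.
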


An alternative proof of J-semistability in the projective case was given by the first author and Keller \cite{DervanKeller}.

A similar result holds for twisted cscK metrics, which are an important tool in constructing cscK metrics, for example through Fine's construction of cscK metrics on fibrations \cite{Fine1,Fine2} and Datar-Sz\'ekelyhidi's proof of the existence of K\"ahler-Einstein metrics on equivariantly K-stable Fano manifolds \cite{DatarSz}.

\begin{definition}[Twisted cscK metric] A \emph{twisted cscK metric} is a solution of $$S(\omega) - \Lambda_{\omega}\alpha = \hat\gamma,$$ where $\hat\gamma$ is the appropriate topological constant and $\alpha$ is an arbitrary K\"ahler metric.\end{definition}

The analogous result to Theorem \ref{functionalsforJ-flow} is as follows, and gives a slight strengthening of a result due to Berman-Berndtsson \cite{BB}.

\begin{theorem}\label{twisted-Mabuchi-coercive} If $(X,[\omega],\alpha)$ admits a twisted cscK metric, then the twisted Mabuchi functional $$\widehat\scM(\varphi) = \scM(\varphi) + \mathcal{J}_{\alpha}(\varphi)$$  is coercive. \end{theorem}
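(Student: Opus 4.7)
The strategy is to upgrade a boundedness statement (of twisted Mabuchi at a twisted cscK) to coercivity by siphoning a small multiple of the reference K\"ahler form into the twist, and using that the leftover piece dominates $J$. Specifically, let $\omega_0 \in [\omega]$ be a twisted cscK metric, so that $S(\omega_0) - \Lambda_{\omega_0} \alpha = \hat\gamma$. For $\epsilon>0$ sufficiently small set $\alpha_\epsilon := \alpha - \epsilon \omega_0$, which is still a K\"ahler form since $\alpha$ is. Using $\Lambda_{\omega_0} \omega_0 = n$ one sees that $S(\omega_0) - \Lambda_{\omega_0} \alpha_\epsilon = \hat\gamma + n\epsilon$, and a direct integration check shows that $\hat\gamma + n\epsilon$ is precisely the topological constant associated to the twist $\alpha_\epsilon$ on $[\omega]$. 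Hence $\omega_0$ is a twisted cscK metric for the triple $(X,[\omega],\alpha_\epsilon)$.

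The next step is to apply Berman--Berndtsson's result \cite{BB} in the twisted setting to the triple $(X,[\omega],\alpha_\epsilon)$, yielding a constant $C$ with $\widehat{\mathcal{M}}_{\alpha_\epsilon}(\varphi) \geq -C$ for all K\"ahler potentials $\varphi$. Since $L_\alpha$ and the topological constant $\gamma = \gamma(\alpha)$ are both linear in $\alpha$, one has $\mathcal{J}_{\alpha_\epsilon} = \mathcal{J}_\alpha - \epsilon \mathcal{J}_{\omega_0}$, and hence
\begin{equation*}
\widehat{\mathcal{M}}(\varphi) \;=\; \widehat{\mathcal{M}}_{\alpha_\epsilon}(\varphi) + \epsilon \,\mathcal{J}_{\omega_0}(\varphi).
\end{equation*}
A short calculation from Definitions \ref{functionals-def} and \ref{L-alpha} identifies $\mathcal{J}_{\omega_0}$ with $I-J$: expanding $L_{\omega_0}(\varphi) = \mathrm{AM}(\varphi) - \int_X \varphi\, \omega_\varphi^n$ (the sum defining $L_{\omega_0}$ is exactly $\mathrm{AM}$ minus its $i=0$ term) and noting that the topological constant for $\alpha = \omega_0$ equals $1$, one gets $\mathcal{J}_{\omega_0}(\varphi) = \tfrac{1}{n+1}\mathrm{AM}(\varphi) - \int_X \varphi\, \omega_\varphi^n = I(\varphi) - J(\varphi)$. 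Combining this with Lemma \ref{IJarenonneg}, which gives $I-J \geq \tfrac{1}{n} J$, yields
\begin{equation*}
\widehat{\mathcal{M}}(\varphi) \;\geq\; -C + \frac{\epsilon}{n}\, J(\varphi),
\end{equation*}
i.e.\ coercivity of $\widehat{\mathcal{M}}$.

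The main obstacle is the appeal in the second paragraph to the \emph{twisted} Berman--Berndtsson boundedness statement; everything else is formal, and the coercivity really comes ``for free'' from the positivity of the twist $\alpha$ once one has such a boundedness theorem in hand. Given that the theorem is advertised as a mild strengthening of \cite{BB}, I expect the required twisted boundedness to follow either from the statement of \cite{BB} directly or from a straightforward adaptation of their variational / pluripotential-theoretic argument, where the extra $L_\alpha$ term in the twisted Mabuchi functional plays an essentially harmless role thanks to the positivity of $\alpha$.
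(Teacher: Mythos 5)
Your argument is essentially identical to the paper's: both perturb the twist to $\alpha-\epsilon\omega_0$, observe that the same metric solves the perturbed twisted cscK equation, invoke the twisted Berman--Berndtsson boundedness for $\widehat\scM_{\alpha-\epsilon\omega_0}$, and then use $\mathcal J_{\omega_0}=I-J\geq \tfrac 1n J$ (Lemma \ref{IJarenonneg}) to extract coercivity. The paper is terser (it simply cites \cite{BB} for the boundedness and states the identity $\mathcal J_{\alpha-\epsilon\omega}=\mathcal J_\alpha-\epsilon(I-J)$ without the computation you spell out), but the decomposition, the key lemma, and the external input are the same.
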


\begin{proof} 

By \cite{BB}, if $(X,[\omega],\alpha)$ admits a twisted cscK metric, then the twisted Mabuchi functional is bounded.

A straightforward argument now proves coercivity. Given a solution to $$S(\omega) - \Lambda_{\omega}\alpha = \hat\gamma,$$ one automatically obtains a solution $$S(\omega) - \Lambda_{\omega}(\alpha-\epsilon\omega) = \hat\gamma'.$$ Taking $\epsilon$ small enough so that $\alpha-\epsilon\omega$ is still K\"ahler, we see that $(X,[\omega],\alpha-\epsilon\omega)$ has bounded twisted Mabuchi functional. That is, the functional $$\widehat\scM_{\alpha-\epsilon\omega}(\varphi) = \scM(\varphi) + \mathcal{J}_{\alpha-\epsilon\omega}(\varphi)$$ is bounded below. One easily sees that $$\mathcal{J}_{\alpha-\epsilon\omega}(\varphi) = \mathcal{J}_{\alpha}(\varphi) - \epsilon (I(\varphi) - J(\varphi)),$$ so that $$\widehat\scM_{\alpha}(\varphi) \geq \epsilon(I(\varphi) - J(\varphi)) + c.$$ That is, using Lemma \ref{IJarenonneg}, the twisted Mabuchi functional is coercive.
 \end{proof}
 
 \begin{remark} In proving Corollary \ref{twisted-Mabuchi-coercive}, one could instead use the openness of solutions of the twisted cscK equation due to Hashimoto and Chen \cite{Chen-twisted,Hashimoto}. The perturbation trick used above is analogous to one used by the first author in studying twisted cscK metrics on projective varieties \cite[Lemma 3.2]{Dervan}.
 \end{remark}

Defining the twisted Donaldson-Futaki invariant of a test-configuration as $$\widehat\DF(\X,\co) = \DF(\X,\co) + J_{[\alpha]}(\X,\co)$$ and taking the obvious definition of uniform twisted K-stability, we immediately obtain the following. 

\begin{corollary} If $(X,[\omega],\alpha)$ admits a twisted cscK metric, then it is uniformly twisted K-stable. \end{corollary}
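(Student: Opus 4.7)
The plan is to carry out exactly the same argument as in Corollary \ref{cor:Kstable}, with the Mabuchi functional replaced by the twisted Mabuchi functional $\widehat\scM$ and the Donaldson-Futaki invariant replaced by its twisted counterpart $\widehat\DF$. By Theorem \ref{twisted-Mabuchi-coercive}, the existence of a twisted cscK metric in $(X,[\omega],\alpha)$ implies that $\widehat\scM(\varphi) \ge \epsilon J(\varphi) + c$ for some $\epsilon > 0$ and $c \in \R$; equivalently, the functional $\widehat\scM(\varphi) - \epsilon J(\varphi)$ is bounded below on the space of K\"ahler potentials for $\omega$.

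First I would invoke Proposition \ref{prop:simplifiedtestconfig}, which lets us assume the test-configuration $(\X,\co)$ has smooth total space and reduced central fibre. I then associate to it the path $\theta_\tau$ constructed from Definition \ref{def:normalisationofpotentials}, with $\tau = -\log|t|^2$. Since $\widehat\scM(\theta_\tau) - \epsilon J(\theta_\tau)$ is bounded below and depends smoothly on $\tau$ for $\tau \gg 0$, its limiting slope at $\tau \to \infty$ must be non-negative:
$$\lim_{\tau \to \infty} \frac{d}{d\tau}\left(\widehat\scM(\theta_\tau) - \epsilon J(\theta_\tau)\right) \ge 0.$$

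Next I would assemble the three Kempf-Ness type identities already proved in the paper: Theorem \ref{DFisslopeofMabuchi} gives $\lim_{\tau\to\infty}\tfrac{d}{d\tau}\scM(\theta_\tau) = \DF(\X,\co)$, the displayed theorem immediately before this corollary gives $\lim_{\tau\to\infty}\tfrac{d}{d\tau}\mathcal{J}_\alpha(\theta_\tau) = J_{[\alpha]}(\X,\co)$, and Theorem \ref{MinNormAsSlope} gives $\lim_{\tau\to\infty}\tfrac{d}{d\tau}J(\theta_\tau) = \|(\X,\co)\|_m$. Linearity of the derivative in the functional, together with the definition $\widehat\DF(\X,\co) = \DF(\X,\co) + J_{[\alpha]}(\X,\co)$, then yields
$$\widehat\DF(\X,\co) - \epsilon\|(\X,\co)\|_m \;=\; \lim_{\tau \to \infty} \frac{d}{d\tau}\left(\widehat\scM(\theta_\tau) - \epsilon J(\theta_\tau)\right) \;\ge\; 0,$$
which is precisely the statement of uniform twisted K-stability.

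There is essentially no obstacle: every piece needed has been proved in the preceding sections, and the twisted case is just a linear combination of the untwisted Mabuchi and $\mathcal{J}_\alpha$ contributions. The one thing worth verifying explicitly is that the Kempf-Ness identity for $\mathcal{J}_\alpha$ is genuinely available for test-configurations with only smooth total space and reduced central fibre (the case to which we reduced via Proposition \ref{prop:simplifiedtestconfig}); but this is guaranteed by the ``trivial'' extension alluded to in the previous theorem, since $\mathcal{J}_\alpha$ is built from $L_\alpha$ and $\AM$ whose slope identities follow from the same pushforward/$\ddbar$-computations as Proposition \ref{AMasddbar} and Theorem \ref{MabuchiMetricFunctional} with $\Ric\,\co$ replaced by the pulled-back form $\alpha$.
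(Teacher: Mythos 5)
Your proposal is correct and matches the paper's (essentially implicit) argument: the paper likewise deduces the corollary by combining the coercivity of the twisted Mabuchi functional with the Kempf--Ness slope identities for $\scM$, $\mathcal{J}_\alpha$ and $J$, exactly as you do. Your closing remark about verifying the slope identity for $\mathcal{J}_\alpha$ on the reduced class of test-configurations is a sensible precaution, and is covered by the same pushforward computations the paper cites.
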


This result is due to the first author in the projective case \cite{Dervan}. A weaker result which holds also in the K\"ahler setting is due to Stoppa \cite{StoppaSlope}.


\vspace{4mm}

\end{document}